\newcommand{\xmark}{\ding{55}}
\definecolor{awesome}{rgb}{1, 0.1, 0}
\definecolor{amber}{rgb}{1.0, 0.75, 0.0}
\definecolor{greenBleu}{rgb}{0.0, 0.6, 0.5}
\definecolor{britishracinggreen}{rgb}{0.0, 0.26, 0.15}
\definecolor{caputmortuum}{rgb}{0.7, 0.55, 0.5}
\newcommand\footnoteref[1]{\protected@xdef\@thefnmark{\ref{#1}}\@footnotemark}
\theoremstyle{plain}
\newtheorem{theorem}{Theorem}[section]
\newtheorem*{theorem*}{Theorem}
\newtheorem{prop}[theorem]{Proposition}
\newtheorem*{prop*}{Proposition}
\newtheorem{corollary}[theorem]{Corollary}
\newtheorem*{corollary*}{Corollary}
\newtheorem{lemma}[theorem]{Lemma}
\newtheorem*{lemma*}{Lemma}
\theoremstyle{definition}
\newtheorem{defi}[theorem]{Definition}
\theoremstyle{remark}
\newtheorem{Rq}{Remark}[section]
\numberwithin{equation}{section}
\DeclareMathOperator{\eee}{e}
\DeclareMathOperator{\Id}{Id}
\newcommand{\be}{\begin{equation}}
\newcommand{\ee}{\end{equation}}
\newcommand{\bes}{\begin{equation*}}
\newcommand{\ees}{\end{equation*}}
\newcommand{\mrm}[1]{\mathrm{#1}}
\newcommand{\R}{\mathbb{R}} 
\newcommand{\C}{\mathbb{C}} 
\newcommand{\N}{\mathbb{N}} 
\newcommand{\D}{\mathbb D} 
\newcommand{\T}{\mathbb{T}}
\newcommand{\Z}{\mathbb{Z}}
\newcommand{\Nzero}{\mathbb{N}_{\geq0}}
\newcommand{\U}{\mathcal{U}}
\newcommand{\G}{\mathcal{G}}
\newcommand{\I}{\mathcal{I}}
\newcommand{\qlq}{\forall}
\newcommand{\e}{\exists}
\newcommand{\bk}{\backslash}
\newcommand{\p}{\partial}
\newcommand{\eps}{\varepsilon}
\newcommand{\la}{\lambda}
\newcommand{\g}{\gamma}
\newcommand{\tauct}{\tau_{ct}}
\newcommand{\thetant}{\theta_n(t)}
\newcommand{\thetapt}{\theta_p(t)}
\newcommand{\thetanMoinsUnt}{\theta_{n-1}(t)}
\newcommand{\eix}{\mathrm{e}^{ix}}
\newcommand{\eiNx}{\mathrm{e}^{iNx}}
\newcommand{\ethetant}{\mathrm{e}^{i\theta_n(t)}}
\newcommand{\Ltwo}{L^2_+(\T)}
\newcommand{\Htwo}{H^2_+(\mathbb{T})}
\newcommand{\intc}{\int_{z\in \mathscr C(0,1)}}
\newcommand{\dz}{\frac{dz}{2\pi i z}}
\newcommand{\lracc}[1]{\left\{#1\right\}} 
\newcommand{\gnt}{g_n^{\,t}}
\newcommand{\gpt}{g_p^{\,t}}
\newcommand{\fnuzero}{f_n^{\,u_0}}
\newcommand{\fpuzero}{f_p^{\,u_0}}
\newcommand{\pkbar}{\overline{p_k}}
\newcommand{\pjbar}{\overline{p_j}}
\newcommand{\DomLu}{\mathrm{Dom}(L_u)}
\newcommand{\Tubar}{T_{\bar{u}}}
\newcommand{\Lu}{L_u}
\newcommand{\Lutilde}{\tilde{L}_{u}}
\newcommand{\Luzerotilde}{\tilde{L}_{u_0}}
\newcommand{\Luttilde}{\tilde{L}_{u(t)}}
\newcommand{\Butilde}{\tilde{B}_{u}}
\newcommand{\vertiii}[1]{{\vert\kern-0.25ex\vert\kern-0.25ex\vert #1 
    \vert\kern-0.25ex\vert\kern-0.25ex\vert}}
\newcommand{\va}[1]{\lvert#1\rvert}
\newcommand{\Ldeux}[1]{\left\| {#1} \right\|_{L^2(\T)}} 
\newcommand{\ps}[2]{  \left\langle #1\,|\, #2  \right\rangle }
\newcommand{\parallelsum}{\mathbin{\!/\mkern-5mu/\!}}
\newcommand{\fr}{\widehat}
\let\oldtocsection=\tocsection
\let\oldtocsubsection=\tocsubsection
\let\oldtocsubsubsection=\tocsubsubsection
\renewcommand{\tocsection}[2]{\hspace{0em}\oldtocsection{#1}{#2}}
\renewcommand{\tocsubsection}[2]{\hspace{2em}\oldtocsubsection{#1}{#2}}
\renewcommand{\tocsubsubsection}[2]{\hspace{4.7em}\oldtocsubsubsection{#1}{#2}}
\begin{document}


\title[Traveling waves \& finite gap potentials for (CS)--equation]{Traveling waves \& finite gap potentials for the Calogero--Sutherland Derivative nonlinear Schrödinger equation
}


\author{Rana Badreddine}
\address{Universit\'e Paris-Saclay, Laboratoire de mathématiques d'Orsay, UMR 8628 du CNRS, B\^atiment 307, 91405 Orsay Cedex, France}
\email{\href{mailto: rana.badreddine@universite-paris-saclay.fr}{rana.badreddine@universite-paris-saclay.fr} }

 \subjclass{35C07,37K10, 35Q55}
 \keywords{Calogero--Sutherland--Moser systems, Derivative nonlinear Schrödinger equation (DNLS), Finite gap potentials, Hardy space, Integrable systems, Intermediate nonlinear Schr\"odinger equation, 
   Stationary waves, Traveling wave solutions}
\date{June 30, 2023.}

\begin{abstract}
We consider the Calogero--Sutherland derivative nonlinear Schrödinger equation
    \begin{equation}\tag{CS}
    i\partial_tu+\partial_x^2u\,\pm\,\frac{2}{i}\,\partial_x\Pi(|u|^2)u=0\,,\qquad x\in\mathbb{T}\,,
    \end{equation}
		where  $\Pi$ is the Szeg\H{o} projector 
  $$\Pi\Big(\sum_{n\in \mathbb{Z}}\widehat{u}(n)\mathrm{e}^{inx}\Big)=\sum_{n\geq 0 }\widehat{u}(n)\mathrm{e}^{inx}\,.$$ 
         First, we characterize the \textit{traveling wave} $u_0(x-ct)$ solutions to the defocusing equation (CS$^-$)\,,
         and  prove  for the focusing equation (CS$^+$), that all the traveling waves must be either
         the constant functions, or plane waves, or rational functions. 
        A noteworthy observation is that
        the (CS)--equation is one of the fewest nonlinear PDE enjoying nontrivial traveling waves with arbitrary small and large $L^2$--norms.
        Second, we study the \textit{finite gap potentials}, and show that they are also rational functions, containing the traveling waves, and they can be grouped into sets that remain invariant under the evolution of the system. 
\end{abstract}

\maketitle
\tableofcontents

\section{\textbf{Introduction}}

\vskip0.5cm
In recent decades, the theory of traveling wave solutions has been the subject of intense research in theoretical  and numerical analysis.
Indeed, 
 many nonlinear PDEs  exhibit these type of waves \cite{Ch04, CH13,An09}\,.
Their importance resides as they are explicit solutions for nonlinear PDEs, and they can sometimes provide information regarding the dynamics of the equation.
However,
the problem of proving the existence of these waves can be more or less challenging depending on the nonlinear part of the PDE.
\vskip0.25cm
In this paper, we consider a type of derivative nonlinear Schrödinger equation with a  nonlocal nonlinearity, called the \textit{Calogero--Sutherland derivative nonlinear Schrödinger equation}  
\be\label{CS-DNLS}\tag{CS}
    i\p_tu+\p_x^2 u \,\pm\,2D\Pi(|u|^2)u=0\,,  \qquad x\in\T:=\R/(2\pi \Z)\,,
\ee
where $D=-i\p_x\,,$ and $\Pi$ denotes the Szeg\H{o} projector
\be\label{szego projector}
	\Pi\left(\sum_{n\in \Z}\,\fr{u}(n)\eee^{inx}\right):=\sum_{n\geq0 }\,\fr{u}(n)\eee^{inx}\,,
\ee
which is an orthogonal projector from $L^2(\T)$ into the Hardy space
\be\label{L2+}
	L^2_+(\T):=\lracc{u\in L^2(\T)\ |\ \fr{u}(n)=0\,,\,\qlq n\in \Z_{\leq -1}}\,.
\ee
Note that the operator $\Pi$ can also be read  as $\Pi=\frac{\Id+i\mathcal{H}+\ps{\cdot}{1}}{2}$ on the circle $\T$, where $\ps{u}{1}=\int_0^{2\pi}u \,\frac{dx}{2\pi}$ and $\mathcal{H}$ is the Hilbert transform 
\be\label{Hilbert op}
    \mathcal{H} u(x):=\sum_{n \in \mathbb{Z}}-i \operatorname{sign}(n)\, \widehat{u}(n) \mrm{e}^{i n x}\,, \qquad 
    \operatorname{sign}(0)=0\,.
\ee
\vskip0.25cm
We are interested in studying the traveling waves $u_0(x-ct)$ of this equation in the focusing (with sign $+$ in front of the nonlinearity) and defocusing case (with sign$-$) in the periodic setting, namely when $x\in\T$\,. 
As noted in \cite{An09}, the  presence of the dispersion operator $\p_x\mathcal{H}$ appearing in the nonlinearity can make the  problem of existence of traveling waves more complicated.
The approach addressed in this paper to characterize the traveling waves 
is based 
on studying them, at a first stage, spectrally i.e. by means of spectral property of the Lax operator related to this equation (see below),
before deriving, at a second stage, their \textbf{explicit formulas}.
\footnote{It should be noted that the idea of using the spectral theory to derive the traveling waves of \eqref{CS-DNLS},  draws inspiration from  \cite[Appendix B]{GK21}\,, where the authors provide an alternative proof to the characterization of the traveling waves for the Benjamin--Ono equation \cite{AT91, Be67} by characterizing them first spectrally.}

 \subsection{Main results} 
 \label{results}
 \hfill
 \\
\textbf{Settings and notation.}
In the sequel, our study takes place with potentials in the Hardy Sobolev spaces of the torus $$H^s_+(\T):=H^s(\T)\cap L^2_+(\T)\,, \qquad s\geq 0\,,$$ where $L^2_+(\T)$ is defined in \eqref{L2+} and $H^s$ refers to the Sobolev space. 
We equip $\Ltwo$ with the standard inner product of $L^2(\T)$\,, $$\ps{u}{v}=\int_0^{2\pi}u \bar v \,\frac{dx}{2\pi }\,.$$
We recall also, that  via the following isometric isomorphism
\begin{align*}
    z\in\D\,,\quad	u(z)=\sum_{k\geq 0}\fr{u}(k)z^k &\qquad\longmapsto\qquad u^*(x):=\sum_{k\geq 0}\fr{u}(k)\eee^{ikx}\,, \quad x\in\T\,,
    \\
    &\sum_{k \geq 0}\,\va{\fr{u}(k)}^2<\infty\,,
\end{align*} 
one can interpret any element of the Hardy space 
 as an analytic function on the open unit disc $\D$\,, whose trace on the boundary $\partial\D$ is in $L^2\;$~.\footnote{~For a simple introduction to the different definitions of Hardy space, we refer to \cite[Chapter 3.]{GMR16}}
We will frequently utilize this property in various proofs.
Furthermore, we denote by $\D$ the open unit disc on $\C$\,, $\D^*:=\lracc{z\in\C\;;\;0<|z|<1}\,$. Moreover, $\N$ denotes the positive integers $1,2,3,\ldots$ And for all $a\in\N\cup \lracc{0}\,,$  $\N_{\geq a}$ refers to the set of integer numbers $\lracc{n\in\Z\,;\, n\geq a}\,.$  
\vskip0.3cm
\begin{center}
    ***
\end{center}
\vskip0.3cm

First,  we deal with the defocusing Calogero--Sutherland DNLS equation
\be\label{CS-defocusing}\tag{CS$^-$}
    i\p_tu+\p_x^2u-2D\Pi(\va{u}^2)u=0\,.
\ee
We denote by $\mathcal{G}_1$ the set of the trivial traveling waves, made up from the constant functions and the plane wave solutions
\be\label{G1-intro}
    \G_1=\lracc{C\eee^{iN(x-Nt)}\,\mid\, C\in\C\,,\, N\in \Nzero}\,.
\ee

\begin{theorem}[Characterization of the traveling waves of \eqref{CS-defocusing}]\label{traveling waves for the defocusing CS}
    A potential $u$ is a traveling wave  of \eqref{CS-defocusing} if and only if $u\in \mathcal{G}_1$ or 
    \be\label{travWaves-defoc}
       u(t,x):= \eee^{i\theta}\left(\alpha+
        \frac{\beta}{1-p\eee^{iN(x-c t)}}\right)\,,
                    \qquad
            p\in \D^*\,,\, \theta\in \T\,, \,
    \ee
    where $N\in\N\,,$ $\,c:=-N\Big(1+\frac{2\alpha}{\beta}\Big),$ and $(\alpha,\beta)$ are two real constants satisfying
    \be\label{alpha, beta-defo}
            \alpha\beta +\frac{\beta^2}{1-\va{p}^2}=-N\,.
    \ee
\end{theorem}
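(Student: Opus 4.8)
The plan follows the two-stage spectral strategy announced in the introduction. First I substitute the traveling-wave ansatz $u(t,x)=\mathrm e^{-i\omega t}u_0(x-ct)$ into \eqref{CS-defocusing}; since $i\p_t\big(\mathrm e^{-i\omega t}u_0(x-ct)\big)=\mathrm e^{-i\omega t}(\omega+cD)u_0$ and $|u|^2$ is insensitive to the gauge factor, the evolution reduces to the stationary profile equation
\[
   (\omega+cD)u_0+\p_x^2u_0-2D\,\Pi(|u_0|^2)u_0=0
\]
for the profile $u_0\in\Ltwo$ alone, the constant time gauge being fixed only at the end by the phase $\theta$.

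The core of the argument is to recast this stationary equation through the Lax operator $\Luzero$ attached to \eqref{CS-defocusing}. This operator is a self-adjoint perturbation of $D$ by the Toeplitz operator with (real) symbol $|u_0|^2$, hence acts on $\Htwo$ with compact resolvent and purely discrete spectrum. The aim is to show that the profile equation is equivalent to the first-order resolvent identity
\[
   (\Luzero-\mu)\,u_0=\kappa\,\mathbf 1,\qquad \mu\in\R,\ \kappa\in\C,
\]
i.e. that $u_0$ is, up to a constant, the resolvent image of the constant function $\mathbf 1\in\Ltwo$, equivalently that $\{\mathbf 1,u_0\}$ spans a two-dimensional $\Luzero$-invariant subspace. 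All the difficulty lies in integrating the second-order profile equation down to such a first-order identity, which I would carry out using the commutator identities of the Lax pair.

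With this spectral normal form in hand, I would solve it explicitly. Because $u_0$ extends holomorphically to $\D$, expressing $(\Luzero-\mu)u_0=\kappa\mathbf 1$ in terms of the Fourier coefficients of $u_0=\sum_{k\ge0}a_k\mathrm e^{ikx}$ (the operator $\Luzero$ itself involving $u_0$, so this is a self-consistency condition) forces the $a_k$ to vanish away from the frequencies $\{kN:k\ge0\}$ and to form a geometric progression there; this is precisely the rational family
\[
   u_0(x)=\mathrm e^{i\theta}\Big(\alpha+\frac{\beta}{1-p\,\mathrm e^{iNx}}\Big),\qquad p\in\D^*,\ N\in\N,
\]
the degenerate cases $\beta=0$ and $p\to0$ returning the trivial family $\G_1$, and the gauge $\theta$ normalizing $\alpha,\beta$ to be real. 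To pin the parameters I substitute back and compute $\Pi(|u_0|^2)$ by expanding $u_0,\overline{u_0}$ in powers of $\mathrm e^{\pm iNx}$ and discarding the negative frequencies (a geometric-series/partial-fraction computation), which gives
\[
   \Pi(|u_0|^2)=(\alpha^2+\alpha\beta)+\Big(\alpha\beta+\frac{\beta^2}{1-|p|^2}\Big)\frac{1}{1-p\,\mathrm e^{iNx}} .
\]
Matching, in the profile equation, the coefficients of the two powers of $(1-p\,\mathrm e^{iNx})^{-1}$ then delivers the constraint $\alpha\beta+\tfrac{\beta^2}{1-|p|^2}=-N$ of \eqref{alpha, beta-defo} from the top power and the velocity $c=-N\big(1+\tfrac{2\alpha}{\beta}\big)$ from the next one.

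The step I expect to be the main obstacle is the spectral reduction of the second paragraph: extracting the first-order resolvent identity from the differential profile equation and, for the forward implication, ruling out any higher-rank spectral configuration, which is what makes the list complete. This is where the full structure of the Lax pair of \eqref{CS-defocusing} is indispensable. The converse inclusion is, by contrast, the explicit verification of the third paragraph, and once $\Pi(|u_0|^2)$ is in closed form the determination of $c$ and of \eqref{alpha, beta-defo} is a routine coefficient matching.
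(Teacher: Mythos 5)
Your proposal is an outline rather than a proof, and the gap sits exactly where you flag it: the reduction of the profile equation to the first-order identity $(\Luzerotilde-\mu)u_0=\kappa\,1$ is asserted ("which I would carry out using the commutator identities"), never derived, and its claimed \emph{equivalence} with the profile equation is unsubstantiated in both directions. It is worth stressing that the paper does \emph{not} obtain completeness by integrating the stationary equation at all: the argument is genuinely time-dependent. One introduces the evolving orthonormal basis $(\gnt)$ solving $\p_t\gnt=\Buttilde\,\gnt$, derives the explicit phase laws $\ps{u(t)}{\gnt}=\ps{u_0}{\fnuzero}\,\eee^{-i\la_n^2t}$, $\ps{1}{\gnt}=\ps{1}{\fnuzero}\,\eee^{-i\la_n^2t}$ and $\ps{S\gpt}{\gnt}=\ps{S\fpuzero}{\fnuzero}\,\eee^{i((\la_p+1)^2-\la_n^2)t}$, uses simplicity of the spectrum ($\la_{n+1}\geq\la_n+1$) to write $\tauct\fnuzero=\eee^{i\theta_n(t)}\gnt$, and then exploits $\ps{Sf_{n-1}}{f_n}\neq0$ for all $n$ (Lemma~\ref{I=vide}) to get a recurrence for $\theta_n(t)$; comparing it with $\theta_n(t)=-\la_n^2t$ at any index where $\ps{u_0}{\fnuzero}\neq0$ and invoking strict monotonicity of the $\la_n$ produces the contradiction that rules out two such indices (Theorem~\ref{characterization u in the phase space <u|fn}). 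Some substitute for this mechanism is indispensable for your forward implication, and nothing in your sketch supplies it; "ruling out any higher-rank spectral configuration" is precisely the theorem's content, not a technical afterthought.

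Your second step is also unsupported: from $(\Luzerotilde-\mu)u_0=\kappa\,1$ it is not immediate that $\widehat{u_0}$ is supported on $N\Z_{\geq0}$ with geometric coefficients --- the integer $N$ does not even appear in that identity, and the coefficient relations are nonlinear and self-consistent because $\Luzerotilde$ involves $u_0$ through $\Pi(\va{u_0}^2)$. The paper instead extracts the rational form from the inverse spectral formula $u_0(z)=\ps{(\Id-zM)^{-1}X}{Y}$: the spectral characterization forces $X$ and $Y$ to have at most two nonzero entries, Proposition~\ref{gap=0 and eigenspaces} and Corollary~\ref{<u|f[n]> <=> gamma[n]=0} give $Sf_{n-1}\parallelsum f_n$ off the index $N$, and the infinite matrices truncate to an $(N+1)\times(N+1)$ block, yielding $u_0(z)=(az^N+b)/(1-pz^N)$ with $p=\ps{f_0}{S^Nf_0}$ (the case $p=0$ producing $\G_1$). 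Only your final paragraph --- computing $\Pi(\va{u_0}^2)$ and matching the coefficients of $(1-p\,\eee^{iN(x-ct)})^{-n}$ for $n=1,2,3$ to obtain \eqref{alpha, beta-defo} and $c=-N\big(1+\tfrac{2\alpha}{\beta}\big)$ --- coincides with the paper's computation and is correct as stated. A minor further point: your ansatz $\eee^{-i\omega t}u_0(x-ct)$ is broader than the theorem, which concerns $u_0(x-ct)$; a constant phase $\theta$ cannot absorb $\omega\neq0$, so you would additionally have to argue $\omega=0$ (e.g.\ via conservation of $\ps{u}{1}$ when the mean is nonzero, as discussed in Section~\ref{Open Problems}).
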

\begin{Rq}\hfill
   The condition \eqref{alpha, beta-defo}  implies that  the real constants $\alpha$ and $\beta$ must be of opposite signs.
\end{Rq}
\vskip0.25cm
Second, we pass to the focusing Calogero--Sutherland DNLS equation
\be\label{CS-focusing}\tag{CS$^+$}
    i\p_tu+\p_x^2u+2D\Pi(\va{u}^2)u=0\,.
\ee
By changing the sign in front of the nonlinearity, the strategy adopted in the defocusing case to exhibit the traveling waves becomes significantly more complicated.  
However,
we can  ensure the existence of a larger set of traveling wave solutions for \eqref{CS-focusing} comparing to \eqref{CS-defocusing}\,, 
and that all the non--trivial traveling waves $u(t,x):=u_0(x-ct)$ of \eqref{CS-focusing} are also rational functions.

\begin{theorem}
    The traveling waves $u_0(x-ct)$ of \eqref{CS-focusing} are either rational functions or trivial waves in $\G_1$\,.
    In addition, the potentials
     \[
                u(t,x):= \eee^{i\theta}\left(\alpha+
                \frac{\beta}{1-p\eee^{iN(x-c t)}}\right)\,, \qquad
            p\in \D^*\,,\, \theta\in \T\,, \, N\in\N\,,
    \]
            where  $c= -N\left(1+\frac{2\alpha}{\beta}\right)$\,, $\,(\alpha,\beta)\in\R\times\R$ such that 
        \be\label{alpha, beta-fo, intro}
              \alpha\beta +\frac{\beta^2}{1-\va{p}^2}=N\,,
        \ee
        and the potentials 
         \[
            u(t,z)=\eee^{i\theta}\eee^{im(x-mt)}\left(\alpha+
                \frac{\beta}{1-p\eee^{i(x-m t)}}\right)
            \qquad p\in \D^*\,,\, \theta\in \T\,, \, m\in\N\,,
        \]
        where $(\alpha,\beta)\in\R\times\R$ such that 
        \[
             \alpha\beta +\frac{\beta^2}{1-\va{p}^2}=1\,,\qquad \beta (m-1)=2\alpha\,,
        \]
        are parts of the set of the traveling waves of \eqref{CS-focusing}\,. 
\end{theorem}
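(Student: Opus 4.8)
The plan is to separate the two assertions: the structural claim that every traveling wave is rational or lies in $\G_1$, which I would establish spectrally through the Lax operator $\Lu$, and the existence claim for the two displayed families, which I would confirm by direct substitution. I begin with the latter, the more concrete half. Writing a traveling wave as $u(t,x)=u_0(x-ct)$ and setting $y=x-ct$, the profile solves
\be
    u_0''-ic\,u_0'+2\big(D\Pi(|u_0|^2)\big)u_0=0\,.
\ee
For the first family, $u_0=\eee^{i\theta}\big(\alpha+\beta(1-pw)^{-1}\big)$ with $w=\eee^{iNy}$; on $\T$ one has $|w|=1$, so $\bar w=w^{-1}$ and $|u_0|^2$ is an explicit rational function of $w$. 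Computing its Szeg\H{o} projection by partial fractions---the pieces carrying the interior pole $w=\bar p$ reducing to a constant under $\Pi$---gives $\Pi(|u_0|^2)=\alpha^2+\alpha\beta+K(1-pw)^{-1}$ with $K=\alpha\beta+\beta^2(1-|p|^2)^{-1}$. Applying $D$, multiplying by $u_0$, and inserting into the profile equation produces a linear combination of $pw(1-pw)^{-2}$ and $pw(1-pw)^{-3}$; requiring both coefficients to vanish forces $K=N$, which is precisely \eqref{alpha, beta-fo, intro}, and then $c=-N(1+2\alpha/\beta)$. The second family is treated identically with $z=\eee^{iy}$ in place of $w$ and $N=1$; here the extra modulation $\eee^{imy}$ supplies one further matching condition, and the analysis yields $c=m$, $K=1$, and $\beta(m-1)=2\alpha$.

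For the structural claim I would exploit that a traveling wave is a relative equilibrium of \eqref{CS-focusing}: its $k$-th Fourier coefficient evolves as $\eee^{-ickt}\fr{u}_0(k)$, so the flow displaces $u_0$ only along the translation orbit. Substituting this into the Lax pair $\p_t\Lut=[\But,\Lut]$ and evaluating at $t=0$ turns the profile equation into an algebraic relation exhibiting $u_0$ as a (generalized) eigenvector of $\Luzero$, confining it to a finite-dimensional invariant subspace---that is, $u_0$ is a finite gap potential. Interpreting this relation as a functional equation for the holomorphic extension of $u_0$ to $\D$ then forces $u_0$ to be a rational function of $z=\eee^{iy}$ with finitely many poles, all outside $\overline{\D}$; the waves supported on a single Fourier mode are exactly the elements of $\G_1$.

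The main obstacle is the structural claim for \eqref{CS-focusing}. Reversing the sign of the nonlinearity enlarges the set of spectral configurations of $\Luzero$ compatible with a relative equilibrium---visible in the richer list of solutions above---so the rigidity behind the complete dichotomy of Theorem \ref{traveling waves for the defocusing CS} is lost. One can still confine $u_0$ to a finite-dimensional invariant subspace and hence conclude rationality, but its exact shape is no longer forced, which is why the focusing statement settles for rationality together with an enumeration of explicit families. The verification of those families, by contrast, is routine once one tracks carefully which partial-fraction terms survive the Szeg\H{o} projection---the one place an error can easily slip in.
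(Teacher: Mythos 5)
Your verification of the two explicit families is sound and is essentially the paper's argument: the paper carries out exactly this partial-fraction computation of $\Pi(\va{u}^2)$ and the coefficient matching in the powers of $(1-p\eee^{-iNct}z^N)^{-1}$ in the defocusing proof, and for the focusing families it simply states that one substitutes into \eqref{CS-focusing} and checks; your sign-flipped condition $K=N$ and speed $c=-N(1+2\alpha/\beta)$, and the extra constraints $c=m$, $K=1$, $\beta(m-1)=2\alpha$ for the modulated family, agree with it.

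The structural half, however, has a genuine gap. Your central step --- that substituting the relative-equilibrium ansatz into the Lax equation ``exhibits $u_0$ as a (generalized) eigenvector of $\Luzero$, confining it to a finite-dimensional invariant subspace'' --- is unjustified and, as stated, false: the traveling waves above are not eigenfunctions of $\Luzero$ (the relevant eigenfunctions are the Blaschke-product family $S^k\uppsi_u$, not $u_0$ itself), and the identity $-icDu_0=B_{u_0}u_0-iL_{u_0}^2u_0$ does not by itself confine $u_0$ to any finite-dimensional subspace. What the traveling-wave structure actually yields, and what the paper proves in Step 1, is that $\ps{u_0}{f_n^{\,u_0}}=0$ for all $n$ beyond at most one index. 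Extracting this requires the machinery you skipped: the transported basis $\gnt$ solving $\p_t\gnt=B_{u(t)}\gnt$ with the phase laws of Lemma~\ref{evolution in the gnt-foc}; the collinearity $\tau_{ct}f_n^{\,u_0}=\eee^{i\theta_n(t)}\gnt$ of Lemma~\ref{tauct fn foc}, which needs \emph{simplicity} of $\nu_n$ and hence is only available for $n$ large via \eqref{liminf}; the phase recurrence of Lemma~\ref{evolution theta[n] foc}, valid only where $\ps{Sf_{n-1}}{f_n}\neq0$, i.e.\ outside the finite set $\I(u_0)$; and finally a contradiction argument using strict monotonicity of the eigenvalues. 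The focusing-specific obstacles you flag yourself (eigenvalues of multiplicity two, possible $\nu_n=0$, $\I(u)\neq\varnothing$) are precisely what this bookkeeping handles, and your sketch never engages them.

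Moreover, even granting finite-gapness, rationality is not a soft consequence of ``a functional equation for the holomorphic extension'': the paper obtains it from the vanishing of the gaps via Corollary~\ref{<u|f[n]> <=> gamma[n]=0} together with the separate characterization Theorem~\ref{finite gap potential th characterization}, whose proof goes through the inversion formula $u(z)=\ps{(\Id-zM)^{-1}X}{Y}$, the reduction of $X$, $Y$, $M$ to $(N+1)\times(N+1)$ blocks, and the Blaschke-product structure of the eigenfunctions (Proposition~\ref{produit-de-Blashke}). So your outline identifies the correct route (traveling wave $\Rightarrow$ finite gap $\Rightarrow$ rational), but the two nontrivial implications along it are asserted rather than proved, and the one mechanism you do propose is incorrect.
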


\begin{Rq}\hfill
 It is worth noting that the condition on $(\alpha,\beta)$ appearing in \eqref{alpha, beta-fo, intro} for the focusing case, allows to obtain a \textit{larger set} of traveling waves in comparison to the condition \eqref{alpha, beta-defo} of the defocusing case.
        Indeed, \eqref{alpha, beta-fo, intro} enables, for instance $\alpha$ or $\alpha+\beta$ to vanish,  which leads respectively to the  following traveling waves
    $$
        u(t,x)=\eee^{i\theta}\frac{\sqrt{N(1-\va{p}^2)}}{1-p\eee^{iN(x+Nt)}}
        \quad \text{and} \quad 
       u(t,x)=\eee^{i\theta}\frac{\sqrt{N(1-\va{p}^2)}\,\eee^{iN(x-Nt)}}{1-p\eee^{iN(x-Nt)}}\,. 
    $$
    Contrary to the focusing case, no traveling waves $u(t,x):=u_0(x-ct)$ with a profile $
 \displaystyle   u_0(x):=\frac{\beta}{1-p\eee^{iNx}}\;$ or $\; \displaystyle u_0(x):=\frac{\alpha \eee^{iNx}}{1-p\eee^{iNx}}\:$ can be found for the \eqref{CS-defocusing}--equation because otherwise, thanks to  \eqref{alpha, beta-defo}\,, 
    $$
        \frac{\beta^2}{1-\va{p}^2}=-N 
    \qquad \text{or} \qquad
        \frac{ \va{p}^2}{1-\va{p}^2}\,\beta^2 =-N 
    $$
    which is clearly impossible for $p\in\D^*\,.$

\end{Rq}
\vskip0.25cm
\begin{Rq}(\textbf{The $L^2$--norm and the speed of the traveling waves} of \eqref{CS-DNLS}\,)\hfill
    \begin{enumerate}[itemsep=10pt]
        \item  As will be established in Subsection~\ref{L2 norm and speed defoc} for the defocusing equation and in Subsection~\ref{speed and L2 norm foc} for the focusing equation, 
       the $L^2$--norm 
       of the  non--trivial 
       traveling waves of \eqref{CS-DNLS}
       can be arbitrarily small or large in $\Ltwo\,.$ More rigorously, for any $r>0\,,$ there exists  a non--trivial traveling wave $ u(t,x):=u_0(x-ct)$ of \eqref{CS-DNLS}  where 
    \[
        \|u_0\|_{L^2}=r\,.
    \]
    \item \textit{For the defocusing \eqref{CS-defocusing}--equation}.
    The nontrivial traveling waves $u$ of the form \eqref{travWaves-defoc} propagate to the right with a speed $c>N\,,$ where $N$ is the degree appearing in the denominator of $u\,.$
    In addition, when $\|u\|_{L^2}\to +\infty\,,$ we have $c\to+\infty\,,$ and when $\|u\|_{L^2}\to 0$ then $c\to N\,.$
       (See Remark~\ref{Rq speed defocusing} and Subsection~\ref{L2 norm and speed defoc} for the proofs).
       \vskip0.2cm
       \noindent
        \textit{For the focusing \eqref{CS-focusing}--equation}. Contrary to the defocusing equation, the \eqref{CS-focusing}'s nontrivial traveling waves do not necessarily  propagate at a relatively high speed (i.e. $c\to\infty$) when $\|u\|_{L^2}^2$ is large (i.e. $\|u\|_{L^2}\to\infty)$.
    In fact, the speed of the traveling waves in the focusing case is independent of the size of its $\Ltwo$--norm\,.
    We refer to 
    Remark~\ref{Rq speed focusing L2 norm infinity} for an example.
    \end{enumerate}
\end{Rq}
\vskip0.25cm
In light of the  previous remarks, we infer that the  Calogero--Sutherland DNLS equation  enjoys a  significantly richer dynamic in the focusing case. 
In particular, one can observe that  the  \eqref{CS-focusing} admits non-trivial \textit{stationary waves} $u(t,x):=u_0(x)$, which is not the case of the defocusing equation. 
An example of nontrivial stationary waves for \eqref{CS-focusing} is
 $$
            u(t,x):=\eee^{i\theta} \sqrt{\frac{N(1-\va{p}^2)}{2(1+\va{p}^2)}}
            \left(1-\frac{2}{1-p\eee^{iNx}}\right)\,,\qquad p\in\D^*\,,\ \theta\in\T\,, \ N\in\N\,.
$$

       


\vskip0.35cm
At a second stage, we study the \textit{finite gap potentials} of the Calogero--Sutherland DNLS equation \eqref{CS-DNLS}\,, i.e.  potentials satisfying that, from a certain rank,  all the gaps between the consecutive eigenvalues of the Lax operator are equal to $1$ (see Subsection~\ref{integranility of CS} for the Lax operator). 
 It turns out that these potentials are \textit{multiphase solutions} containing the stationary and  traveling waves of \eqref{CS-DNLS}\,.
The following theorem aims to characterize the finite gap potentials on $\T\,$ in the state space.

\begin{theorem}[Characterization in the state space of the \eqref{CS-DNLS}'s finite gap potentials] \label{finite gap potential th characterization}
    The finite gap potentials of \eqref{CS-DNLS} are either the functions 
     $u(x)=C\eee^{iNx}$\,, $C\in\C^*\,,$ $N\in\Nzero\,,$ or the rational function 
    \[
     u(x)=\eee^{im_0\, x}\prod_{j=1}^r\left(\frac{\eix-\pjbar}{1-p_j\eix}\right)^{m_j-1}\left(a+\sum_{j=1}^r\frac{c_j}{1-p_j\eix}\right), \quad p_j\in\D^*\,,\ p_k\neq p_j\,,\ k\neq j\,,  
    \]
    where, for $N\in\N\,,$ $\,m_0\in\llbracket0,N-1\rrbracket\,,\,$ $m_1, \ldots,m_r\in\llbracket 1, N\rrbracket$ such that  
$
    m_0+\sum_{j=1}^rm_j=N\,,
$  and $(a, \,c_1\,,\,\ldots\,,\,c_r)\in\C\times\C^{r}$ satisfy for all $j=1\,,\,\ldots\,,r\,,$
    \begin{enumerate}[label=(\roman*),itemsep=2pt]
        \item In the defocusing case,
        \be\label{cond a, ck-defoc intro}
            \overline{a}\,c_{j}\,+\,\sum_{k=1}^{r}\,\frac{c_{j}\,\overline{c_k}}{1-p_j\overline{p_k}}
            =-m_j\,,
        \ee
        \item In the focusing case, 
        \be\label{cond a, ck-foc intro}
            \overline{a}\,c_{j}\,+\,\sum_{k=1}^{r}\,\frac{c_{j}\,\overline{c_k}}{1-p_j\overline{p_k}}
            =m_j\,,
        \ee
    \end{enumerate}
    with  $a\neq 0$ if $m_0\neq0\,.$
      Moreover, these finite gap potentials can be regrouped  into sets that remain invariant under the evolution of \eqref{CS-DNLS}\,.
\end{theorem}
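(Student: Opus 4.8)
The plan is to carry out the analysis on the spectral side of the Lax operator $\Lu$ and only then transport the conclusion back to the state space. First I would record the spectral reformulation of the hypothesis: arranging the eigenvalues of $\Lu$ as a nondecreasing sequence $\lambda_0\leq\lambda_1\leq\cdots\to+\infty$, being ``finite gap'' means there is a rank $n_0$ beyond which $\lambda_{n+1}-\lambda_n=1$, so that the spectrum of $\Lu$ is the union of a finite configuration and a full arithmetic progression of step $1$, the shift of which I would identify with the integer $N$. The invariance statement is the cheapest part and I would dispatch it immediately: since \eqref{CS-DNLS} admits a Lax pair, the Lax equation $\p_t\Lut=[\But,\Lut]$ yields a unitary conjugation $\Lut=U(t)\Luzero U(t)^*$, so the eigenvalues and their multiplicities, the finite gap property, and the value of $N$ are all conserved; the invariant sets are exactly the level sets of these spectral data. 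This reduces the theorem to a stationary statement about a single potential $u$.

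The heart of the matter is to show that the step-$1$ asymptotics of the spectrum force $u$ to be rational of the stated shape. I would exploit that $\Lu$ is a Toeplitz-type perturbation of $D$ (of the form $\Lu=D-T_{\va u^2}$ in the defocusing case, the focusing case flipping the sign of the perturbation), together with the Toeplitz/Hankel identity $T_{\va u^2}=T_{\bar u}T_u+H_u^*H_u$, where $H_u f=(\Id-\Pi)(uf)$. The point is that the eventual coincidence of the spectrum of $\Lu$ with an exact step-$1$ ladder forces the Hankel operator $H_u$ to have finite rank; by Kronecker's theorem this is equivalent to $u$ being a rational function whose poles lie at the points $1/\pjbar$ with $p_j\in\D^*$. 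Reading the total degree off the asymptotic spectrum gives $N$; the orders of the poles give the multiplicities $m_j$ with $m_0+\sum_j m_j=N$; and the inner--outer factorization of $u$ in the Hardy space of $\D$ produces the monomial prefactor $\eee^{im_0 x}$ and the finite Blaschke factors $\big(\tfrac{\eix-\pjbar}{1-p_j\eix}\big)^{m_j-1}$, while the remaining outer-type part is the partial fraction $a+\sum_j \tfrac{c_j}{1-p_j\eix}$, with $a\neq 0$ forced exactly when $m_0\neq 0$ by the requirement $u\in\Ltwo$. The plane wave alternative $u=C\eee^{iNx}$ is recovered as the degenerate case $r=0$, where $\va u^2$ is constant, $\Lu=D-\va C^2$, and the spectrum is a pure step-$1$ ladder.

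It then remains to pin down the conditions \eqref{cond a, ck-defoc intro}--\eqref{cond a, ck-foc intro} on $(a,c_1,\dots,c_r)$, and to run the converse. I would insert the rational ansatz into the eigenvalue equation $\Lu\phi=\lambda\phi$ for the eigenvectors attached to the perturbed part of the spectrum, using $D\tfrac{1}{1-p_j\eix}=\tfrac{p_j\eix}{(1-p_j\eix)^2}$ and computing $T_{\va u^2}$ on the Szeg\H{o} kernels $\tfrac{1}{1-p_j\eix}$. The crucial observation is that the scalars $\tfrac{1}{1-p_j\pkbar}$ appearing in the statement are exactly the Gram entries $\big\langle \tfrac{1}{1-p_j\eix}\,\big|\,\tfrac{1}{1-p_k\eix}\big\rangle$, so matching the residues at each pole $1/\pjbar$ turns the eigenvalue equation into the stated quadratic relations, which read as a pairing of the Szeg\H{o} kernel $\tfrac{1}{1-p_j\eix}$ against the outer part $a+\sum_k\tfrac{c_k}{1-p_k\eix}$ of $u$. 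The global sign ($-m_j$ in the defocusing case versus $+m_j$ in the focusing case) is inherited directly from the sign of the nonlinearity in \eqref{CS-defocusing} versus \eqref{CS-focusing}, which determines the sign of the Toeplitz perturbation in $\Lu$; and the converse direction is obtained by running the very same residue computation backwards, verifying that each potential of the stated form produces a $\Lu$ whose spectrum is eventually a step-$1$ ladder.

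I expect the main obstacle to be the rationality step together with the exact bookkeeping of multiplicities: turning the soft spectral statement ``the spectrum is eventually a step-$1$ ladder'' into the precise equivalence with finite rank of $H_u$, and then into the inner--outer factored form with the correct pole orders $m_j$ and total degree $N$, requires controlling how the finitely many anomalous eigenvalues and their algebraic multiplicities translate into the analytic structure of $u$. This is where the genuine inverse spectral content of the theorem lies. By contrast, once the ansatz is justified, the coefficient identities should follow from a comparatively mechanical residue computation against the Szeg\H{o} kernels, and the invariance is immediate from the Lax structure.
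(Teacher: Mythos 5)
There is a genuine gap at the heart of your argument: the implication ``the spectrum of $\Lu$ is eventually an exact step--$1$ ladder $\Rightarrow$ the Hankel operator of $u$ has finite rank'' is asserted, never proved, and it is precisely the inverse--spectral content of the theorem. The eigenvalue list alone does not see the commutation structure; what the paper actually extracts from the gap condition is an \emph{eigenfunction} ladder. Concretely, it first combines the commutator identity $\Lu S=S\Lu+S-\ps{\cdot}{S^*u}u$ with Proposition~\ref{gap=0 and eigenspaces} to produce a single eigenfunction $\uppsi$ with $\Lu S^k\uppsi=(\nu+k)S^k\uppsi$ for all $k\geq0$, shows $\va{\uppsi}=1$ on $\T$, hence $\uppsi$ is a finite Blaschke product (Proposition~\ref{produit-de-Blashke}), and then obtains rationality not from Kronecker's theorem but from the explicit inversion formula $u(z)=\ps{(\Id-zM)^{-1}X}{Y}$: in the basis $(f_k)_{k<N}\cup(S^k\uppsi)_{k\geq0}$ the matrices truncate to size $N+1$, giving $u=P/Q$ with $Q(z)=\det(\Id-zM_{\leq N})$ equal to the Blaschke denominator. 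Any rigorous version of your route would have to reconstruct this ladder of eigenfunctions before it could say anything about $H_u$, so the Hankel/Kronecker detour does not save work and, as written, is a gap rather than a proof. Your operator identification is also off: the Lax operators are $L_u=D-T_uT_{\bar u}$ (focusing) and $\tilde L_u=D+T_uT_{\bar u}$ (defocusing) --- you wrote $D-T_{\va{u}^2}$ with the signs swapped, and $T_{\va{u}^2}$ differs from $T_uT_{\bar u}$ by exactly the Hankel square you are trying to control, so the conflation is not harmless.

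Two further points. First, the structural bookkeeping is misattributed: the exponents $m_j-1$ and the condition $a\neq0$ when $m_0\neq0$ do \emph{not} come from an inner--outer factorization (the factor $a+\sum_j c_j/(1-p_jz)$ need not be outer, and $u\in\Ltwo$ never forces $a\neq0$). In the paper they come from (i) the identity $\va{u}^2=z\p_z\log\uppsi_u-\nu_u$ on $\p\D$ --- a consequence of $\Lu\uppsi_u=\nu_u\uppsi_u$ and the holomorphic extension of $\bar u\,\uppsi_u$ --- whose right--hand side has only simple poles at $1/p_j$, forcing $(z-\pjbar)^{m_j-1}$ to divide the numerator; and (ii) minimality of the Blaschke degree, which forces $\ps{u}{\uppsi_u}\neq0$ and hence $\deg P=N$, i.e.\ $a\neq0$. (Your residue/Szeg\H{o}--kernel computation for the coefficient conditions is sound and matches the paper's Steps 4--5.) Second, the invariance is not purely spectral, as you claim: Lax conjugation conserves the eigenvalues and hence the finite gap property, but the label $N$ of the invariant set $\mathcal{U}_N$ is the \emph{minimal degree} of the Blaschke eigenfunction, and the spectrum only determines the difference $\nu_u-N$, since anomalous eigenvalues may sit on the ladder (see the paper's Appendix, where $0$ is a double eigenvalue). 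The paper proves invariance of $\mathcal{U}_N$ dynamically in Proposition~\ref{U[N] invariant by the evolution}: it evolves $\uppsi_{u_0}$ by $\p_t\varrho=B_{u(t)}\varrho$, checks that $\varrho(t)$ remains a finite Blaschke product, and pins its degree via $\big\lvert\det M_{\leq N}(u(t))\big\rvert=\big\lvert\det M_{\leq N}(u_0)\big\rvert$; your ``level sets of spectral data'' shortcut does not deliver this.
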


\vskip0.3cm
In order to establish the results mentioned above, it is necessary to provide an overview regarding the integrability of the Calogero--Sutherland derivative nonlinear Schrödinger equation \eqref{CS-DNLS}.

\subsection{About the Calogero--Sutherland DNLS equation}
The Calogero--
Sutherland DNLS equation \eqref{CS-DNLS} has been actively studied by physicists and engineers. In particular, we cite the works of Tutiya \cite{Tu09}, Berntson--Fagerlund \cite{BF22}, Stone--Anduaga--Xing \cite{SAX08}, Polychronakos \cite{Po95a,Po95a} and  Matsuno \cite{Ma00, Ma01a, Ma01b, Ma02a, Ma02b, Ma03, Ma04a, Ma04b,Ma23}... 
\vskip0.25cm
Mathematically, recent progress has been made with regard to this equation. In this subsection, we provide a brief overview of some  established results concerning~\eqref{CS-DNLS}\,.

\subsubsection{Local and global well--posedness results}
 To the best of the author's knowledge, the first LWP result for \eqref{CS-DNLS} equation traced back to De Moura \cite{deM07} who established the LWP
\footnote{
    ~Actually, they prove the local well--posedness of a family of nonlocal nonlinear Schrödinger equation  \cite{PG96} that includes also the \eqref{CS-DNLS}--equation.
} of \eqref{CS-DNLS} for small initial data in $H^s(\R)$ with $s\geq 1\,$, and extend his LWP's result to a GWP by means of the gauge transformation.
More recently, Barros--DeMoura--Santos present in \cite{BMS19} the LWP of \eqref{CS-DNLS} for small initial data in the Besov space $B_2^{\frac12,1}(\R)\,$. 
\vskip0.25cm
Besides, observe that the Calogero--Sutherland DNLS equations \eqref{CS-DNLS} is invariant under the scaling
$$
    u_{\lambda}(t,x)=\la^{\frac12}u(\la t,\la^2 x)\,,\qquad  \la>0\,.
$$
This suggests that \eqref{CS-DNLS} is $L^2$--critical.
In the Hardy Sobolev spaces settings, i.e. in $H^s_+:= H^s\cap L^2_+\,,$ where recall $L^2_+$ is the Hardy space defined in \eqref{L2+} in the periodic case, and as follows in the non-periodic case
$$
    L_{+}^2(\mathbb{R})=\left\{u \in L^2(\mathbb{R}) \ ;\ \operatorname{supp} \hat{u} \subset[0, \infty)\right\}\,,
$$ G\'erard--Lenzmann \cite{GL22} obtained the LWP in $H^s_+(\R)$ with $s>\frac12\,,$ by following the arguments of \cite{deMP10}\,. 
Furthermore, by the virtue of a Lax pair structure associated with the Calogero--Sutherland DNLS equation \eqref{CS-DNLS} (see below), they inferred the global well--posedness of the equation  in all $H^k_+(\R)\,$, $k\in\N_{\geq 1}$ 
for small initial data $\|u_0\|_{L^2(\R)}<\sqrt{2\pi}$ in the focusing case.
\vskip0.25cm
Moving to the {periodic setting, i.e. when $x\in\T\,,$ a recent work of the author shows  the GWP of \eqref{CS-DNLS} in all  $H^s_+(\T)$\,, $s\geq 0\,,$ for small critical initial data in the focusing case, namely when $\|u_0\|_{L^2(\T)}<1\,,$ and for arbitrary initial data in the defocusing case. 
In particular, the extension of the flow to the critical space $L^2_+(\T)$ has been achieved after deriving \textit{the explicit formula} for the solution of the Calogero--Sutherland DNLS equation \eqref{CS-DNLS} \cite[Proposition 2.5]{Ba23}. Moreover, under the same assumptions,  the relative compactness of the trajectories has been established in  $H^s_+(\T)$\,, for all $s\geq 0$  \cite{Ba23}.

\subsubsection{Integrability of the (CS)-equation} \label{integranility of CS}
One of the most remarkable features of the Calogero--Sutherland DNLS equation is its integrability as a PDE on $\R$ and on $\T$. In fact, it enjoys a\textit{ Lax pair structure} in the focusing and defocusing case \cite{GL22,Ba23}~: for any $u\in H^s_+(\T)\,,$ $s>\frac32\,,$ there exists two operators $(L_u,B_u)$ satisfying the Lax  equation 
$$
\frac{dL_u}{dt}=[B_u,L_u]\,, \qquad [B_u,L_u]:=B_uL_u-L_uB_u\,,
$$
where 
\begin{enumerate}[label=(\roman*)]
    \item In the focusing case,
        \be\label{Lax operators foc case}
            L_u=D-T_uT_{\bar{u}}\,,
            \qquad
            B_u=T_{u}T_{\partial_x\bar{u}}-T_{\partial_x{u}}T_{\bar{u}} +i(T_{u}T_{\bar{u}})^2\,.
        \ee
    \item In the defocusing case,
    \be\label{Lax operators defoc case}
        \tilde{L}_u=D + T_uT_{\bar u}\,, 
        \qquad
        \tilde{B}_u=-T_uT_{\partial_x\bar u}+T_{\partial_xu}T_{\bar u} +i(T_uT_{\bar u})^2\,.
    \ee
\end{enumerate}
The differential operator $D$ is $-i\p_x\,,$ and $T_u$ is the Toeplitz operator of symbol $u$ defined for any $u\in L^\infty$ by
\be\label{Toeplitz operator}
    T_{u}f=\Pi(uf)\,, \qquad \qlq f\in L^2_+\,,
\ee
where $\Pi$ is the Szeg\H{o} projector introduced in \eqref{szego projector}\,. Note that since we are working in the Hardy space, $L_u$ is a semi-bounded operator from below
and $\tilde{L}_u$ is a nonnegative operator. In addition, 
as noted in \cite[Proposition 2.3]{Ba23}\,, the Lax operators $L_u$ and $\tilde{L}_u$ are self--adjoint operators of domain $H^1_+(\T)\,,$ and are of compact resolvent. 
Therefore, their spectra are made up of a sequence of eigenvalues going to $+\infty\,,$
\begin{align}\label{spectre}
    \sigma(L_u):=&\lracc{\nu_0(u)\leq \ldots\leq \nu_n(u)\leq \ldots},\qquad \nu_0(u)\geq -\|u\|_{L^\infty}^2\,,
    \\
    \sigma(\tilde{L}_{u}):=&\lracc{\la_0(u)\leq \ldots\leq \la_n(u)\leq \ldots}\,,\qquad \la_0(u)\geq0\;.\notag
\end{align}
Recall that any Lax operator satisfies the isospectral property 
\be\label{isospectral property}
    L_{u_0}=U(t)^{-1}L_{u(t)}U(t)\,,
\ee
where  $u_0$ is the initial data, $u(t)$ is the evolution of the solution starting from $u_0\,,$ and $U(t)$ is a family of operators solving the Cauchy problem
\[
    \begin{cases}
    \frac{d}{dt}U(t) =B_{u(t)}\, U(t)\\
			U( 0) =\operatorname{Id}
		\end{cases}\,.
    \]
 The  identity \eqref{isospectral property} implies that the spectrum of $L_{u(t)}$ is invariant by the evolution, i.e.  $\nu_n(u(t))=\nu_n(u_0)$ and $\la_{n}(u(t))=\la_n(u_0)$ for all $n\,$.
Therefore, in the sequel,
 we omit the variable $u$  in $\nu_n(u)$ and $\la_n(u)$  when it does not make confusion.
\vskip0.1cm
 Further information regarding the spectrum of the Lax operators will be provided in Section \ref{spectral property of the Lax operator}.
\vskip0.3cm
\vskip0.25cm

 \subsubsection{Traveling waves on \texorpdfstring{$\R$}{R}}
Let us mention that the focusing Calogero--Sutherland DNLS equation \eqref{CS-focusing} also enjoys traveling waves and stationary waves in the \textit{nonperiodic case }(i.e. $x\in\R$). 
They are of the form 
 $$
    u(t,x)=\mrm{e}^{i\theta}\mrm{e}^{iv(x-vt) }\,\la^\frac12\,\mathcal{R}\big(\la (x-2vt)+y\big)\,,\quad \la>0,\ y\in\R,\ \theta \in \T\,, \, v\in\R\,,
  $$
where the profile
\be
    \mathcal{R}(x)= \mrm{e}^{i\theta} \frac{\sqrt{2\, \mrm{Im} p}}{x+p}\,, \qquad p\in \C_+\,,\ \theta\in\R\,,
 \ee
  is obtained as ground states (minimizers) for the energy functional  \cite[Section~4]{GL22}.
  Notice that all these waves are of $L^2$--norm equal to $\sqrt{2\pi}\,$. 
  Therefore, this situation differs from the torus $\T$, where in the latter case, there is no $L^2$-threshold that would prevent the existence of small or large  traveling waves in $L^2(\T)$\,.
  Essentially, the main reason that leads to a more diverse class of traveling waves in the periodic setting compared to the non--periodic seting, is due to 
  the spectral property carried by the Lax operator in both cases.
  Indeed,  on $\mathbb{R}$\,, the Lax operator has an absolute continuous spectrum and a finite number of eigenvalues \cite[Section~5]{GL22}. In contrast with $\T$, the Lax operator present only point spectrum formed by eigenvalues \cite[Section 2]{Ba23}.

\vskip0.25cm
To summarize, we refer to the following table (Table~\ref{Tab: Table trav waves})\,.
\FloatBarrier
\begin{table}[!ht]
\begin{adjustwidth}{-1.1cm}{}
        \centering
\begin{tabular}{p{0.33\textwidth}|p{0.3\textwidth}|p{0.3\textwidth}|}
\cline{2-3} 
  &\centering  Focusing \eqref{CS-focusing} on $\R$ & \hskip0.1cm Defocusing \eqref{CS-focusing} on $\R$ \\
\hline 
 \multicolumn{1}{|l|}{Stationary waves} & \centering \checkmark&  
 \\
\cline{1-2} 
 \multicolumn{1}{|l|}{Traveling waves} & \centering \checkmark &  \\
\cline{1-2} 
 \multicolumn{1}{|l|}{Wave speed} &\centering $c\in\R$  &  \\
\cline{1-2} 
 \multicolumn{1}{|l|}{$L^2$--norm of the traveling waves} & \centering $\|u\|_{L^2}=\sqrt{2\pi}$ &  \\
\hline 
  & \centering Focusing \eqref{CS-focusing} on $\T$ & \hskip0.1cm Defocusing \eqref{CS-focusing} on $\T$ \\
\hline 
 \multicolumn{1}{|l|}{Non--trivial stationary waves} & \centering \checkmark & \hskip2.1cm \xmark \\
\hline 
 \multicolumn{1}{|l|}{Traveling waves} & \centering \checkmark & \hskip2.1cm \checkmark \\
\hline 
 \multicolumn{1}{|l|}{Wave speed} & \centering$c\in\R$& $\hskip1.75cm c\geq N$  \\
\hline 
 \multicolumn{1}{|l|}{$L^2$--norm of the non--trivial traveling waves} & \centering $\|u\|_{L^2}\in(0,+\infty)$ & $\hskip0.8cm\|u\|_{L^2}\in(0,+\infty)$ \\
 \hline
\end{tabular}
\end{adjustwidth}
\begin{center}
\caption{
Where $N\in\N$ is the denominator's degree of a  traveling wave of the form~\eqref{travWaves-defoc}.
In addition, by a non--trivial traveling wave, we mean the traveling wave  that does not belong to $\G_1$ where $\G_1$ is the set of trivial traveling waves defined in \eqref{G1-intro}\,. Moreover, by non--trivial stationary waves we mean the solution $u(t,x)=u_0(x)$ that are not constant functions.}
\label{Tab: Table trav waves}
\end{center}
\end{table}

\subsection{Outline of the paper}
The paper is organized as follows.
In Section~\ref{spectral property of the Lax operator}\,, we present some spectral properties concerning  the eigenvalues and the eigenfunctions of the Lax operators $L_u$ and $\Lutilde\,.$ Moving on to Section~\ref{traveling waves for (CS-)}, we focus on the traveling waves of the defocusing Calogero-Sutherland DNLS equation~\eqref{CS-defocusing}. This section follows a two-step process. Subsection~\ref{spectral characterization of traveling waves of (CS-)} provides a spectral characterization of these waves, while Subsection~\ref{Explicit formulas trav. Waves for (CS-)} derives their explicit formulas. Moreover, Subsection~\ref{L2 norm and speed defoc} includes remarks concerning the speed and $L^2$-norm of these traveling waves for the defocusing \eqref{CS-defocusing}--equation.
\vskip0.2cm
In Section~\ref{Traveling waves for (CS+)}\,, we delve into the analysis of traveling waves   for the focusing Calogero--Sutherland DNLS equation~\eqref{CS-focusing}\,. 
Thus, we describe the set of traveling waves of \eqref{CS-focusing} in Subsubsection~\ref{characterization of traveling waves for (CS+)}, and we highlight the presence of a larger set of traveling waves in the focusing case comparing to the defocusing case.
Similarly to the defocusing case, some remarks related to the speed and the $L^2$--norm of the traveling waves of \eqref{CS-focusing} are  discussed in Subsection~\ref{speed and L2 norm foc}\,, and in particular we establish the existence of stationary wave solutions for the focusing Calogero-Sutherland DNLS equation~\eqref{CS-focusing}\,. 
\vskip0.2cm
Note that in order to describe the traveling waves of \eqref{CS-focusing}, one need to understand the set of finite gap potentials. To this end, 
Section~\ref{finite gap potentials} is dedicated to the study of finite gap potentials for the Calogero--Sutherland DNLS equation~\eqref{CS-DNLS}.
\vskip0.2cm
Throughout this paper, we have assumed sufficient regularity on the solutions. However, in Section~\ref{Rk on the regularity}, we discuss how the same analysis can be extended to solutions with lower regularity. Lastly, in Section~\ref{Open Problems}, we present some open problems for further exploration.

\subsection*{Acknowledges}
The author would like to thank her Ph.D. advisor \textit{Patrick G\'erard }
for proposing this problem  and suggesting \cite[Appendix B]{GK21} as a useful reference to start the investigation.

\section{\textbf{Spectral properties for the Lax operators}}\label{spectral property of the Lax operator}

As mentioned in the introduction, our aim is to describe the traveling waves of the Calogero-Sutherland DNLS equation \eqref{CS-DNLS}. 
 In order to accomplish this goal,  our strategy relies on characterizing them first in the state space, by means of some spectral tools of the Lax operators $L_u$ and $\Lutilde$ introduced in \eqref{Lax operators foc case} and in \eqref{Lax operators defoc case}\,,  respectively. Therefore, we need to delve deeper into the spectral properties of the Lax operators.

\vskip0.2cm

In the sequel, we assume, for more convenience, that $u$ is any function of the state space with enough regularity, for example,  $u\in H^2_+(\T)\,.$  
But, it is worth mentioning that the analysis can be easily extended to potentials with less regularity as well (see Section~\ref{Rk on the regularity}). 
Besides, recall from \eqref{spectre}\,, that the Lax operators $\Lutilde$ and $L_u$ have point spectra, bounded from below
\begin{align*}
    \sigma(\tilde{L}_{u}):=&\lracc{\la_0\leq \ldots\leq \la_n\leq \ldots}\,,\qquad \la_0\geq0\;,
    \\
    \sigma(L_u):=&\lracc{\nu_0\leq \ldots\leq \nu_n\leq \ldots},\qquad \nu_0\geq -\|u\|_{L^\infty}^2\,.
\end{align*}
\vskip0.25cm
The following proposition aims to give more information, regarding the multiplicity of the eigenvalues $(\nu_n)$ and $(\la_n)\,$. But before, we need to recall two useful commutator identities. We denote by $S$ the \textit{shift operator} defined as
\begin{equation}\label{shift}
    S:\Ltwo\to\Ltwo\,, \qquad 
    Sh(x)=\eee^{ix} h(x)\,.
\end{equation}
Thus, for all $u\in H^2_+(\T)\,,$ we have from \cite[Lemma 2.3]{Ba23}\,,
\begin{align}
    \Lutilde S =S \Lutilde+S+\ps{\,\cdot}{S^*u}u\,,
    \label{commutateur [Lu,s]}
    \\
    L_uS =S L_u+S-\ps{\,\cdot}{S^*u}u\,,\notag
\end{align}
where $S^*$ denotes the adjoint operator of $S$
\begin{equation}\label{adjoint-shift}
    S^*:\Ltwo\to\Ltwo\,, \qquad 
    S^*h(x)=\Pi(\eee^{-ix} h(x))\,,
\end{equation}
with  $\Pi$ is the Szeg\H{o} projector defined in \eqref{szego projector}\,, and 
$\Lu$ and $\Lutilde$ are defined in \eqref{Lax operators foc case} and \eqref{Lax operators defoc case}\,.
In addition, we also have from the same lemma \cite[Lemma 2.3]{Ba23}\,,
\begin{align}
    [S^*, B_u]=i\Big( S^*L_u^2 \,-\, (L_u+\Id)^2S^* \Big)\,,
    \label{commutateur [Bu,s]}
    \\
    [S^*, \Butilde]=i\Big( S^*\Lutilde^2 \,-\, (\Lutilde+\Id)^2S^* \Big)\,,\notag
\end{align}
where $[S^*, B_u]$ denotes the commutator $S^*B_u-B_uS^*\,,$ $B_u$ and $\tilde{B}_u$ are the two skew-adjoint operators of the Lax pairs, defined respectively in \eqref{Lax operators foc case} and \eqref{Lax operators defoc case}\,.

\begin{prop}[Multiplicity of $(\la_n)$ and $(\nu_n)$]\label{multiplicite val propres}\hfill

\noindent
\underline{Defocusing case.}
The eigenvalues $(\la_n)$ of $\Lutilde$ are all simple. More precisely,
\be\label{simplicite val prop Lu tilde}
    \la_{n+1}\geq \la_n+1\,, \qquad n\in \Nzero\,.
\ee
\underline{Focusing case.}
The eigenvalues $(\nu_n)$ of $L_u$ are of multiplicity at most two
\be\label{gap-multiplicity2}
    \nu_{n+2} \geq \nu_n +1\, \qquad n\in \Nzero\,.
\ee 
Moreover,  when $n$ is large enough, the eigenvalues of $\Lu$ are simple. More precisely,
\be\label{liminf}
    \liminf_{n\to\infty}\ \nu_{n+1}-\nu_{n}\geq 1\,.
\ee 
Furthermore, for all $0\leq\alpha<1$ such that $\|u\|_{L^2}^2<1-\alpha\,,$ we have for all $n\in\Nzero\,,$
\be\label{simplicite u L2 <1}
    \nu_{n+1} > \nu_n +\alpha\, \,.
\ee
\end{prop}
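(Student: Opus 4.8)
The plan is to conjugate the Lax operators by the shift $S$ and to read off all four assertions from a single operator identity combined with Cauchy's interlacing theorem. Starting from the commutator identities \eqref{commutateur [Lu,s]}, and using that $S$ is an isometry ($S^*S=\Id$) annihilating the constants ($S^*1=0$), I would apply $S^*$ on the left of each relation to obtain
\begin{align*}
    S^*\Lutilde S &= \Lutilde+\Id+K\,,\\
    S^*\Lu S &= \Lu+\Id-K\,,
\end{align*}
where $K:=\ps{\,\cdot\,}{S^*u}S^*u$ is the rank-one, nonnegative operator with $\operatorname{tr}K=\|S^*u\|_{L^2}^2=\|u\|_{L^2}^2-\va{\fr u(0)}^2\le\|u\|_{L^2}^2$. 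The structural point is that, since $S$ maps $\Ltwo$ isometrically onto the codimension-one subspace $S\Ltwo=\{h:\fr h(0)=0\}$ while $S^*$ kills its orthogonal complement $\mathbb C\cdot 1$, each operator $S^*\Lu S$ (resp. $S^*\Lutilde S$) is unitarily equivalent, via $S$, to the compression of $\Lu$ (resp. $\Lutilde$) to $S\Ltwo$. Hence Cauchy's interlacing theorem applies: writing $\mu_n$ for the $n$-th eigenvalue of the conjugated operator under consideration, one has $\nu_n\le\mu_n\le\nu_{n+1}$ in the focusing case and $\la_n\le\mu_n\le\la_{n+1}$ in the defocusing case.

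The defocusing simplicity is then immediate: from $S^*\Lutilde S=\Lutilde+\Id+K\ge\Lutilde+\Id$ and the min--max principle, $\mu_n\ge\la_n+1$; combined with the interlacing bound $\mu_n\le\la_{n+1}$ this gives $\la_{n+1}\ge\la_n+1$, which is \eqref{simplicite val prop Lu tilde}. For the focusing case I would exploit the identity $\Lu+\Id=(S^*\Lu S)+K$ in two complementary ways. Reading $S^*\Lu S$ as a compression yields $\mu_{n+1}\le\nu_{n+2}$, while reading $\Lu+\Id$ as a rank-one nonnegative perturbation of $S^*\Lu S$ yields, by interlacing for rank-one perturbations, $\nu_n+1\le\mu_{n+1}$. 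Chaining these produces $\nu_{n+2}\ge\nu_n+1$, i.e. \eqref{gap-multiplicity2}. The quantitative bound \eqref{simplicite u L2 <1} is obtained in the same spirit, but from the cruder operator inequality $S^*\Lu S=\Lu+\Id-K\ge\Lu+(1-\|u\|_{L^2}^2)\Id$, whence $\mu_n\ge\nu_n+1-\|u\|_{L^2}^2$ and, together with $\mu_n\le\nu_{n+1}$, $\nu_{n+1}\ge\nu_n+1-\|u\|_{L^2}^2>\nu_n+\alpha$ as soon as $\|u\|_{L^2}^2<1-\alpha$.

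The one genuinely delicate point is the asymptotic simplicity \eqref{liminf}: the rank-one $K$ does allow eigenvalues to collide — this is exactly how multiplicity two arises — so one must prove that collisions occur only finitely often. Here I would diagonalise $\Lu$ in an orthonormal eigenbasis $(f_n)$, available since $\Lu$ is self-adjoint with compact resolvent, and note that in this basis $K$ has diagonal entries $\va{c_n}^2$ with $c_n:=\ps{f_n}{S^*u}$ and $\sum_n\va{c_n}^2=\|S^*u\|_{L^2}^2<\infty$, while $S^*\Lu S=\operatorname{diag}(\nu_n+1)-K$. Because $K$ is a rank-one nonnegative, hence trace-class, perturbation, the ordered eigenvalue differences sum to the trace, $\sum_n\big((\nu_n+1)-\mu_n\big)=\|S^*u\|_{L^2}^2<\infty$, with all summands nonnegative; therefore $(\nu_n+1)-\mu_n\to0$. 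Inserting this into $\mu_n\le\nu_{n+1}$ gives $\nu_{n+1}\ge\mu_n=\nu_n+1-o(1)$, that is, $\liminf_{n\to\infty}(\nu_{n+1}-\nu_n)\ge1$.

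The main obstacle is making this last eigenvalue-sum/trace step rigorous in infinite dimensions, where the traces of $S^*\Lu S$ and $\Lu+\Id$ are individually infinite. I would handle it by truncating to $P_N$, the orthogonal projection onto $\operatorname{span}(f_0,\dots,f_N)$, applying the elementary finite-dimensional identity $\operatorname{tr}(P_N(\Lu+\Id)P_N)-\operatorname{tr}(P_N(S^*\Lu S)P_N)=\operatorname{tr}(P_NKP_N)=\sum_{n=0}^N\va{c_n}^2$, and passing to the limit: the truncated eigenvalues $\la_n(P_N(S^*\Lu S)P_N)$ decrease to $\mu_n$ by min--max monotonicity, each difference $(\nu_n+1)-\la_n(P_N(S^*\Lu S)P_N)$ is nonnegative and monotone in $N$, and monotone convergence then forces the full series to equal $\|S^*u\|_{L^2}^2$. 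Everything else reduces to the two standard interlacing inequalities, so this convergence argument is the only place requiring real care.
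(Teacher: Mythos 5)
Your proof is correct. For \eqref{simplicite val prop Lu tilde}, \eqref{gap-multiplicity2} and \eqref{simplicite u L2 <1} it is essentially the paper's own max--min argument in different clothing: the paper tests the max--min principle on $E=\C 1\oplus S(F)$ (enlarging by $\C u$ in the focusing case so that the negative term $-\va{\ps{Sg}{u}}^2$ drops out), and this is exactly your codimension-one compression interlacing $\mu_n\le\la_{n+1}$ (resp.\ $\mu_{n+1}\le\nu_{n+2}$) combined with the form bounds $S^*\Lutilde S=\Lutilde+\Id+K\ge\Lutilde+\Id$ and $S^*\Lu S=\Lu+\Id-K\ge\Lu+(1-\|u\|_{L^2}^2)\Id$; your rank-one-perturbation interlacing $\nu_n+1\le\mu_{n+1}$ plays precisely the role of the paper's extra direction $\C u$, both costing one dimension. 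The genuine divergence is in \eqref{liminf}. The paper takes $F_n=\operatorname{span}(f_0,\dots,f_{n-1})$, derives $\nu_{n+1}\ge\nu_n+1-\sup\{\va{\ps{S\varphi}{u}}^2 : \varphi\perp F_n,\ \|\varphi\|_{L^2}=1\}$, and kills the supremum by a soft weak-compactness contradiction: a weak limit $\varphi$ of near-maximizers is orthogonal to every $f_p$, hence zero, contradicting $\va{\ps{S\varphi}{u}}\ge\eps/2$. You instead prove summability of the gap defects through the trace identity for the rank-one perturbation, $\sum_n\big((\nu_n+1)-\mu_n\big)=\operatorname{tr}K=\|S^*u\|_{L^2}^2<\infty$, justified by $P_N$-truncation and monotone convergence. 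Both routes are valid; yours is quantitatively stronger (the defects $(\nu_n+1)-\mu_n$ are summable, not merely $o(1)$), at the cost of the Rayleigh--Ritz convergence step, which, as you correctly flag, requires $\operatorname{span}(f_n)$ to be a form core of $S^*\Lu S$ --- true here because $K$ is bounded, so the form domains of $S^*\Lu S$ and $\Lu$ coincide with $H^{\frac12}_+(\T)$ and the eigenfunctions of $\Lu$ span a form core. The paper's argument is shorter but purely qualitative.

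One simplification worth noting: for the conclusion $\liminf_{n\to\infty}(\nu_{n+1}-\nu_n)\ge1$ you only need the \emph{inequality} $\sum_n d_n\le\operatorname{tr}K$ with $d_n:=(\nu_n+1)-\mu_n\ge0$, which already follows from $\sum_{n=0}^{M}d_n^{(N)}\le\sum_{n=0}^{N}\va{c_n}^2\le\|S^*u\|_{L^2}^2$ for $M\le N$ upon letting $N\to\infty$; the equality of the series with $\operatorname{tr}K$, though true by your monotone-convergence argument, is not needed, so the delicate passage to the limit can be lightened accordingly.
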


\begin{Rq}\hfill
\label{val propre de multiplicite au plus 2}
    \begin{enumerate}
        \item It should be noted that for any potential $u$, the eigenvalues $(\nu_n)$ of $L_u$ cannot be all simple. For instance, take $u(x)=\eee^{ix}$\,, one can easily check that for $L_u=D-T_uT_{\overline{u}}\,,$
        $$L_u 1=L_u e^{ix} =0\,.$$
        \item  Inequality \eqref{liminf} implies that as $n>\!>0$, the lower bound of the distance between two consecutive eigenvalues $\nu_n$ gets closer to $1\,$.
    \end{enumerate}
\end{Rq}

\begin{proof} All the presented inequalities are a direct consequence of the max--min principle
\begin{gather*}
    \la_n= \max_{\underset{\dim F\leq n}{F\subseteq L^2_+}}\,
    \min\lracc{\ps{\tilde{L}_uh}{h}\,;\,h\in  F^{\perp}\cap H^\frac12_+(\T)\,,\ \|h\|_{L^2}=1 }\,.
\\
    \nu_n= \max_{\underset{\dim F\leq n}{F\subseteq L^2_+}}\,
    \min\lracc{\ps{L_uh}{h}\,;\,h\in  F^{\perp}\cap H^\frac12_+(\T)\,,\ \|h\|_{L^2}=1 }\,.
\end{gather*}
\textbf{Spectrum of $\Lutilde\,$.}
Let $F$ be any subspace of $\Ltwo$ of dimension $n\,,$ and consider $E:=\mathbb{C} 1 \oplus S(F)\,,$ where $S$ is the shift operator, then  
$$
    \la_{n+1}\geq \min\{\langle \tilde{L}_{u}h\mid h\rangle\ ;\|h\|_{L^2}=1, \, h\in E ^{\bot }\cap H^\frac12_+\}
$$
Observe that $E^{\perp}=S\left(F^{\perp}\right)\,$, thus by \eqref{commutateur [Lu,s]}\,,
\begin{align*}
    \la_{n+1}\geq &\,
    \min\left\{\langle \Lutilde g \mid g \rangle+1+\va{\ps{S g}{u}}^2\ ;\ \|g\|_{L^2}=1,\ g\in F^{\bot}\cap H^\frac12_+\right\}\,.
\end{align*}
In addition, since $\lvert\ps{S g}{u}\rvert^2\geq0$\,, we infer for all $n\in\Nzero\,,$
$$
    \la_{n+1} \geq \la_n +1\,.
$$
\vskip0.25cm
\noindent
\textbf{Spectrum of $\Lu\,$--Inequality~\eqref{gap-multiplicity2}.}
let $F$ be any subspace of $L^2_+(\T)$ of dimension $n$, and take $G:=\mathbb{C} 1 \oplus S (F)+~\C u$\,.
Then, 
$$
    \nu_{n+2}(u)\geq  \min\{\langle L_{u}h\mid h\rangle\ ;\|h\|_{L^2}=1, \ h\in G ^{\bot }\cap H^\frac12_+\}\,.
$$
Since $G^{\perp} =S\left(F^{\perp}\cap (S^*u)^{\perp}\right)$, then
$$
    \nu_{n+2}\geq \min\left\{\langle L_u Sg \mid Sg \rangle\ ;\ \|g\|_{L^2}=1,\ g\in F^{\bot}\cap (S ^*u)^{\bot}\cap H^\frac12_+(\T)\right\}\,.
$$
Note that $g\perp S^*u$, then by \eqref{commutateur [Lu,s]}, 
\begin{align*}
    \nu_{n+2}\geq &\, 
    \min\left\{\langle  L_u g \mid  g \rangle+1;\ \|g\|_{L^2}=1,\ g\in F^{\bot}\cap (S ^*u)^{\bot}\cap H^\frac12_+(\T)\right\}\,,
\end{align*}
leading to 
\[
    \nu_{n+2} \geq \nu_n +1\,.
\]
\vskip0.25cm
\noindent
\textbf{Inequality~\eqref{liminf}\,.} For any $n\,,$  let
$
    F_n=\operatorname{span}\{f_0,f_1,\ldots,f_{n-1}\}
$
be the subspace of $\Ltwo$ of dimension $n$ made up of the first $n$ eigenfunctions of $L_u\,.$ For this choice of $F_n$\,, 
\be\label{ln=min}
    \min\left\{\langle L_u h \mid h \rangle\ ;\ \|h\|_{L^2}=1,\ h\in F^{\bot}_n\cap H^\frac12_+\right\}=\nu_n\,.
\ee
Let us consider the subspace  $E:=\mathbb{C} 1 \oplus S(F_n)$ of $L^2_+(\T)$  of dimension  $n+1$\,, then
\[
    \nu_{n+1}\geq\,  \min\{\langle L_{u}g\mid g\rangle\ ;\|g\|_{L^2}=1\,, \; g\in E ^{\bot }\cap H^\frac12_+\}\,.
\]
Note that
$E^{\perp} \cap H_{+}^{\frac12}=S\left(F^{\perp}_n \cap H_{+}^{\frac12}\right)$\,. Therefore, by \eqref{commutateur [Lu,s]}\,,
\begin{align}\label{ln+1}
    \nu_{n+1}\geq\, \min\left\{\langle L_u \phi \mid \phi \rangle+1-\va{\ps{S \varphi}{u}}^2\ ;\ \|\varphi\|_{L^2}=1,\ \varphi\in F^{\bot}_n\cap H^1_+\right\}.\notag
\end{align}
It results, for all $n\in \Nzero\,,$
\be\label{nu[n+1]-nu[n]}
    \nu_{n+1}\geq\,\nu_n\,+\,1\,-\,\sup _{\underset{\varphi\in F^{\bot}_n}{\Ldeux{\varphi} =1}}\left| \langle S \varphi\right| u\rangle |^2\ .
\ee
To conclude the proof, it remains to prove $\sup _{\underset{\varphi\in F^{\bot}_n}{\Ldeux{\varphi} =1}}\left| \langle S \varphi\right| u\rangle |^2\underset{n\to \infty}{\longrightarrow}0\,.$
\begin{lemma} Let $F_n$ be the 
be the subspace of $\Ltwo$ defined as above.  Then
$$
 \sup _{\underset{\varphi\in F^{\bot}_n}{\Ldeux{\varphi} =1}}
 \lvert \langle S\varphi \mid u\rangle \rvert \longrightarrow 0\ \ \text{as}\ n\to\infty\ .
$$
\end{lemma}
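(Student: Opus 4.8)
The plan is to recognize the supremum as the norm of an orthogonal projection, and then to exploit the completeness of the eigenbasis of $L_u$ together with the convergence of a Parseval series. First I would move the shift onto $u$ by adjunction: since $S^*$ is the adjoint of $S$ (see \eqref{adjoint-shift}), one has $\ps{S\varphi}{u}=\ps{\varphi}{S^*u}$ for every $\varphi\in\Ltwo$, so that
\[
    \sup_{\underset{\Ldeux{\varphi}=1}{\varphi\in F_n^{\bot}}}\va{\ps{S\varphi}{u}}
    =\sup_{\underset{\Ldeux{\varphi}=1}{\varphi\in F_n^{\bot}}}\va{\ps{\varphi}{S^*u}}\,.
\]
The right-hand side is exactly $\Ldeux{P_n(S^*u)}$, where $P_n$ denotes the orthogonal projection of $\Ltwo$ onto $F_n^{\bot}$: indeed, writing $S^*u=P_n(S^*u)+(\Id-P_n)(S^*u)$, only the first summand pairs nontrivially with $\varphi\in F_n^{\bot}$, and the extremal value $\Ldeux{P_n(S^*u)}$ is attained at $\varphi=P_n(S^*u)/\Ldeux{P_n(S^*u)}$.

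Next I would invoke that $L_u$ is self-adjoint with compact resolvent (recalled in Subsection~\ref{integranility of CS}), so its normalized eigenfunctions $(f_k)_{k\geq0}$ form a Hilbert basis of $\Ltwo$. Since $F_n=\operatorname{span}\{f_0,\ldots,f_{n-1}\}$, its orthogonal complement is $F_n^{\bot}=\overline{\operatorname{span}}\{f_k\,;\,k\geq n\}$, and Parseval's identity yields
\[
    \Ldeux{P_n(S^*u)}^2=\sum_{k\geq n}\va{\ps{S^*u}{f_k}}^2\,.
\]
This is precisely the tail of the convergent series $\sum_{k\geq0}\va{\ps{S^*u}{f_k}}^2=\Ldeux{S^*u}^2\leq\Ldeux{u}^2<\infty\,$; hence the tail tends to $0$ as $n\to\infty$, which is the claim.

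There is essentially no analytic obstacle here: the content is entirely structural, and the only point that must be invoked with care is the \emph{completeness} of the eigensystem $(f_k)$ in $\Ltwo$, which is what forces the projections $P_n(S^*u)$ to vanish in the limit. I would also remark that the argument controls the supremum over all of $F_n^{\bot}$, a larger set than the $H^1_+$-constrained set appearing in \eqref{nu[n+1]-nu[n]}; since enlarging the admissible set only increases the supremum, this stronger statement is exactly what is needed to conclude \eqref{liminf}.
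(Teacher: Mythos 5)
Your proof is correct, and it takes a genuinely different route from the paper's. The paper argues by contradiction via weak compactness: assuming the supremum stays above some $\eps>0$, it extracts witnesses $\varphi_n\in F_n^{\bot}$ with $\va{\ps{S\varphi_n}{u}}\geq \eps/2$, passes to a weakly convergent subsequence $\varphi_n\rightharpoonup\varphi$, uses $\ps{S\varphi_n}{u}=\ps{\varphi_n}{S^*u}\to\ps{\varphi}{S^*u}$ to keep the pairing bounded below, and then shows $\ps{\varphi}{f_p}=0$ for every $p$ (since $\varphi_n\perp F_n$), so that completeness of the eigensystem forces $\varphi=0$, a contradiction. You instead evaluate the supremum exactly: by adjunction it equals $\sup_{\varphi\in F_n^{\bot},\,\Ldeux{\varphi}=1}\va{\ps{\varphi}{S^*u}}=\Ldeux{P_n(S^*u)}$, and by Parseval this is the tail $\bigl(\sum_{k\geq n}\va{\ps{S^*u}{f_k}}^2\bigr)^{1/2}$ of a convergent series. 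Both arguments hinge on the same key fact — completeness of $(f_k)$ in $\Ltwo$, which the paper also invokes at the final step — but yours is more elementary (no weak sequential compactness or subsequence extraction) and strictly more informative: it is quantitative, identifying the supremum with an explicit Parseval tail, hence giving monotone decay and in principle a rate, whereas the compactness argument only yields the qualitative limit. Your closing observation is also well taken: since $S$ is bounded on $\Ltwo$, the supremum over all of $F_n^{\bot}$ makes sense and dominates the $H^1_+$-constrained supremum appearing in \eqref{nu[n+1]-nu[n]}, so your stronger statement is exactly what the proof of \eqref{liminf} requires.
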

\begin{proof}
    Suppose for the sake of contradiction, that for all $n\in\Nzero\,,$
    $$
        \sup _{\underset{\varphi\in F^{\bot}_{n}}{\Ldeux{\varphi} =1}}\va{ \ps{S\varphi}{u}} \geq\eps\,, \qquad \eps>0\,.
    $$
    Namely, there exists $\varphi_n\in F^{\bot}_{n}$\,,  $\ \Ldeux{\varphi_n}=1$ such that 
    $
        \left| \langle S \varphi_n\right| u\rangle | \geq\frac\eps2\ .
    $
    Hence, since $\Ldeux{\varphi_n}=1\,,$  then up to a  sub-sequence $\varphi_{n}\rightharpoonup \varphi$ in $\Ltwo$ as $n\to \infty$\,, which yields to
    $$
        \va{\ps{S\varphi_{n}}{u}}\underset{n\to\infty}{\longrightarrow}\va{\ps{S\varphi}{u}}\,,
    $$
    and so $\ps{S\varphi}{u}\geq\frac\eps2$ . On the other hand, since $\varphi_{n}\perp F_{n}$ then 
    $$ 
        \ps{\varphi_{n}}{f_p}=0\,, \qquad \qlq \,0\leq p\leq n-1\,. 
    $$ 
    Taking $n\to \infty$\,, we infer
    $$  
        \ps{\varphi}{f_p}=0\ , \qquad \qlq p\in\Nzero\,. 
    $$
    Note that the eigenfunctions $(f_p)$ of the self--adjoint operator $\Lu$ form an orthonormal basis of $\Ltwo\,.$ 
    Therefore, we have $\varphi=0$\,, which is a contradiction with $\ps{S\varphi}{u}\geq~\frac\eps2\,$. 
\end{proof}
\vskip0.25cm
\noindent
\textbf{Inequality}~\eqref{simplicite u L2 <1}\,\textbf{.} 
It is a consequence of inequality~\eqref{nu[n+1]-nu[n]} after applying the Cauchy--Schwarz inequality and considering the fact that $\|u\|_{L^2}^2<1-\alpha\,.$
 
\end{proof}

\textit{In what follows, we make a slight abuse of notation by using $(f_n)$ to denote both an orthonormal basis of $\Ltwo$ consisting of the eigenfunctions of the self-adjoint operator $\Lu$, and an orthonormal basis of $\Ltwo$ consisting of the eigenfunctions of $\Lutilde$. Nonetheless, we will specify the context in which we are working to avoid confusion and ensure that $(f_n)$ is understood appropriately as either the eigenfunctions of $L_u$ or $\Lutilde$.}

\begin{lemma}\label{relation between <Sf|f> <u|f> and <1|f>}
Given $u\in H^2_+(\T)$\,, then for all $n,p\in\Nzero\,,$
\begin{itemize}
    \item \textbf{Defocusing case.} 
        \begin{gather*}
            \ps{1}{u}\langle u\mid f_n\rangle
            =
            \la_n\,\langle 1\mid f_n\rangle\,,
            \\
            (\la_n-\la_p-1)\,
            \langle S f_p\mid f_n\rangle 
            = 
            \langle S f_p\mid u\rangle\,
            \langle u\mid f_n\rangle\,.
        \end{gather*}
    \item \textbf{Focusing case.} 
        \begin{gather*}
        \ps{1}{u}\ps{u}{f_n}=-\nu_n\ps{1}{f_n}\,,
        \\
        (\nu_n-\nu_p-1)\ps{S f_p}{f_n}=-\ps{S f_p}{u}\ps{u}{f_n}\,.
\end{gather*}
\end{itemize}
\end{lemma}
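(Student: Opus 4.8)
The plan is to derive all four identities directly from the commutator relations \eqref{commutateur [Lu,s]} combined with the self-adjointness of the Lax operators, handling the two identities within each case one after the other. I would begin with the first identity by computing $\Lutilde 1$ (resp. $\Lu 1$) explicitly, where $1$ is the constant function of $\Ltwo$. Since $D1=0$ and $\bar u$ carries only nonpositive Fourier modes, we have $\Pi(\bar u)=\overline{\fr u(0)}=\ps{1}{u}$, hence $T_{\bar u}1=\ps{1}{u}\,1$ and, using $u\in\Ltwo$, $T_uT_{\bar u}1=\ps{1}{u}\,\Pi(u)=\ps{1}{u}\,u$. This gives $\Lutilde 1=\ps{1}{u}\,u$ in the defocusing case and $\Lu 1=-\ps{1}{u}\,u$ in the focusing case. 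Pairing against $f_n$ and invoking self-adjointness, $\ps{\Lutilde 1}{f_n}=\ps{1}{\Lutilde f_n}=\la_n\ps{1}{f_n}$, immediately yields $\ps{1}{u}\ps{u}{f_n}=\la_n\ps{1}{f_n}$, and likewise $\ps{1}{u}\ps{u}{f_n}=-\nu_n\ps{1}{f_n}$ in the focusing case.

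For the second identity I would compute $\ps{\Lutilde Sf_p}{f_n}$ in two different ways. On the one hand, self-adjointness gives $\ps{\Lutilde Sf_p}{f_n}=\ps{Sf_p}{\Lutilde f_n}=\la_n\ps{Sf_p}{f_n}$. On the other hand, applying the commutator identity \eqref{commutateur [Lu,s]} to $f_p$ gives $\Lutilde Sf_p=\la_p Sf_p+Sf_p+\ps{f_p}{S^*u}\,u$, so that, after using the definition \eqref{adjoint-shift} of $S^*$ in the form $\ps{f_p}{S^*u}=\ps{Sf_p}{u}$, we obtain $\ps{\Lutilde Sf_p}{f_n}=(\la_p+1)\ps{Sf_p}{f_n}+\ps{Sf_p}{u}\ps{u}{f_n}$. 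Equating the two expressions and rearranging produces $(\la_n-\la_p-1)\ps{Sf_p}{f_n}=\ps{Sf_p}{u}\ps{u}{f_n}$. The focusing case is identical word for word, using instead $\Lu Sf_p=\nu_p Sf_p+Sf_p-\ps{f_p}{S^*u}\,u$ from the second line of \eqref{commutateur [Lu,s]}, which carries the minus sign over to the right-hand side and yields $(\nu_n-\nu_p-1)\ps{Sf_p}{f_n}=-\ps{Sf_p}{u}\ps{u}{f_n}$.

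There is no genuine obstacle: each identity collapses to a single pairing once $\Lutilde 1$ has been computed and once the commutator has been inserted, the eigenvalues being real so that no conjugation issues arise. The only points demanding mild care are the Szeg\H{o}-projection bookkeeping behind $T_{\bar u}1=\ps{1}{u}\,1$ (which is where the hypothesis $u\in\Ltwo$ enters) and the elementary transfer $\ps{f_p}{S^*u}=\ps{Sf_p}{u}$; everything else is purely algebraic.
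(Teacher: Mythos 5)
Your proposal is correct and follows essentially the same route as the paper's proof: compute $\Lutilde 1=\ps{1}{u}u$ (resp. $\Lu 1=-\ps{1}{u}u$) and pair with $f_n$ using self-adjointness for the first identity, then apply the commutator relation \eqref{commutateur [Lu,s]} to $f_p$ and pair with $f_n$ for the second. Your version merely spells out the Szeg\H{o}-projection computation behind $T_{\bar u}1=\ps{1}{u}\,1$ and the transfer $\ps{f_p}{S^*u}=\ps{Sf_p}{u}$, which the paper leaves implicit.
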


\begin{proof}
We prove first the identities for the \textit{defocusing case}. 
By definition of $\tilde{L}_u=D+T_uT_{\bar u}$\,, we have
$$
    \tilde{L}_u1=\ps{1}{u}u\,.
$$
Then taking the inner product of both sides  with $f_n\,,$ and using the fact that $\tilde{L}_u$ is a self--adjoint operator, lead to the first identity.
    For the second one, thanks to the commutator relation between $\tilde{L}_u$ and $S$ 
    \[
         \tilde{L}_uSf_p =S \tilde{L}_uf_p+Sf_p+\ps{Sf_p}{u}u\,,
    \]
    of equation~\eqref{commutateur [Lu,s]}, we infer by taking  the inner product with $f_n$ the second identity.
    \vskip0.25cm
    Besides, by considering the \textit{focusing case} with $L_u=D-T_uT_{\bar u}$ it follows that $L_u1=-\ps{1}{u}u\,.$ This explains the sign $-$  appearing in the first statement. As for the second one, since by \eqref{commutateur [Lu,s]}
    \[
         L_uSf_p =S L_uf_p+Sf_p-\ps{Sf_p}{u}u\,.
    \]
    than taking once more the inner product with $f_n$ leads to the desired identity.
\end{proof}

\vskip0.25cm
In light of the previous lemma and based on the commutator identities \eqref{commutateur [Lu,s]}, one can investigate further information regarding  the spectral data (i.e. the eigenvalues and the eigenvectors) of $L_u$ and $\Lutilde\,,$ especially when the quantities $\ps{u}{f_n}$ vanishes.  The following lemmas/propositions aim to achieve this.
\vskip0.25cm
For the following, we denote by $E_{\nu_n}$  the eigenspace of  $\Lu$ corresponding to the eigenvalue $\nu_n$\,. In addition, the notation $f\parallelsum g$ means that the two vectors $f$ and $g$ are collinear in $\Ltwo\,$.
\begin{prop}\label{gap=0 and eigenspaces}
    For all $n\in\N\,,$ such that $\nu_{n}\neq 0\,,$ we have
    $$  
        \nu_{n}=\nu_{n-1}+1\implies
        [S f_{n-1}\in E_{\nu_{n}}] \text{ or }\, [f_{n}\in S E_{\nu_{n-1}}]\:,
    $$
     Moreover, for the defocusing case,
    $$  
        \la_{n}=\la_{n-1}+1\implies
        S f_{n-1}\parallelsum f_n  \:.
    $$
\end{prop}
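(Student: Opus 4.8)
The plan is to combine the commutator identity \eqref{commutateur [Lu,s]} with the two identities of Lemma~\ref{relation between <Sf|f> <u|f> and <1|f>}, and to split the argument according to whether $\ps{Sf_{n-1}}{u}$ or $\ps{u}{f_n}$ vanishes. For the focusing case, applying \eqref{commutateur [Lu,s]} to the eigenfunction $f_{n-1}$ gives
$$
\Lu Sf_{n-1} = (\nu_{n-1}+1)\,Sf_{n-1} - \ps{Sf_{n-1}}{u}\,u\,,
$$
so under the hypothesis $\nu_n=\nu_{n-1}+1$ this reads $(\Lu-\nu_n)Sf_{n-1} = -\ps{Sf_{n-1}}{u}\,u$. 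Meanwhile, the second identity of Lemma~\ref{relation between <Sf|f> <u|f> and <1|f>} with $p=n-1$ reduces, under the same gap hypothesis, to $\ps{Sf_{n-1}}{u}\ps{u}{f_n}=0$, which is exactly the dichotomy I want to exploit.

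I would then distinguish two cases. If $\ps{Sf_{n-1}}{u}=0$, the first display immediately gives $\Lu Sf_{n-1}=\nu_n Sf_{n-1}$, i.e. $Sf_{n-1}\in E_{\nu_n}$. If instead $\ps{u}{f_n}=0$, I use the first identity of Lemma~\ref{relation between <Sf|f> <u|f> and <1|f>}, namely $\ps{1}{u}\ps{u}{f_n}=-\nu_n\ps{1}{f_n}$, together with the hypothesis $\nu_n\neq 0$, to conclude $\ps{1}{f_n}=0$. Since the range of $S$ is exactly $\lracc{h\in\Ltwo:\ps{h}{1}=0}$ and $SS^*h=h-\ps{h}{1}\,1$, this means $f_n=Sg$ with $g:=S^*f_n$. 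Re-applying \eqref{commutateur [Lu,s]} to $g$ and using $\ps{f_n}{u}=\overline{\ps{u}{f_n}}=0$ gives
$$
\nu_n f_n=\Lu f_n=S\Lu g+f_n\,,\qquad\text{hence}\qquad S\Lu g=(\nu_n-1)Sg=\nu_{n-1}Sg\,;
$$
by injectivity of $S$ this yields $\Lu g=\nu_{n-1}g$, so $f_n\in SE_{\nu_{n-1}}$. This establishes the stated dichotomy.

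For the defocusing statement I would run the identical argument with $\Lutilde$ and the $+$ sign in \eqref{commutateur [Lu,s]}, noting that here $\la_n\geq \la_0+n\geq n\geq 1$ for $n\geq 1$ by \eqref{simplicite val prop Lu tilde}, so the non-vanishing hypothesis needed in the second case is automatic. The additional input is that, by Proposition~\ref{multiplicite val propres}, the eigenvalues of $\Lutilde$ are all simple, hence each eigenspace is one-dimensional. Consequently, in the first case $Sf_{n-1}\in E_{\la_n}=\C f_n$ and in the second $f_n\in SE_{\la_{n-1}}=\C\,Sf_{n-1}$, so in both cases $Sf_{n-1}\parallelsum f_n$, as claimed.

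The main obstacle is the second case: one must first certify that $f_n$ lies in the range of the shift — this is precisely where the assumption $\nu_n\neq 0$ (resp. $\la_n\neq 0$) enters, through the first identity of Lemma~\ref{relation between <Sf|f> <u|f> and <1|f>} — and only then can the commutator relation \eqref{commutateur [Lu,s]} be read backwards, together with the injectivity of $S$, to push the eigenvalue equation down to $g=S^*f_n$. Once the algebraic relation $\ps{Sf_{n-1}}{u}\ps{u}{f_n}=0$ has been extracted and this range issue handled, the remaining computations are routine.
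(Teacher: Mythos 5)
Your proposal is correct and follows essentially the same route as the paper: extract the dichotomy $\ps{Sf_{n-1}}{u}\,\ps{u}{f_n}=0$ from the second identity of Lemma~\ref{relation between <Sf|f> <u|f> and <1|f>}, handle the first case directly via \eqref{commutateur [Lu,s]}, and in the second case use the first identity together with $\nu_n\neq 0$ to get $\ps{1}{f_n}=0$, write $f_n=Sg$, and push the eigenvalue equation down to $g$ (the paper applies $S^*$ with $S^*S=\Id$ where you invoke injectivity of $S$ — the same step). Your treatment of the defocusing case, including the observation that $\la_n\geq n\geq 1$ makes the non-vanishing hypothesis automatic and that simplicity of the $\la_n$ collapses both alternatives to $Sf_{n-1}\parallelsum f_n$, also matches the paper's argument and its accompanying remark.
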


\begin{Rq}\hfill
    \begin{enumerate}
        \item The condition $\nu_{n}\neq 0$ cannot be omitted. For an example, we refer to the Appendix~\ref{Appendix 1}\,.
        \item For the defocusing case, the condition of non--vanishing eigenvalues $\la_n\neq 0$ is already satisfied for all $n\in\N\,,$ since  $\Lutilde$ is a non--negative operator on the Hardy space, and for all $n\in\N$\,, we have by \eqref{simplicite val prop Lu tilde}\,, $\la_{n}\geq \la_{n-1}+1\,.$
    \end{enumerate}
\end{Rq}

\begin{proof}
The key is to use Lemma~\ref{relation between <Sf|f> <u|f> and <1|f>} and the commutator identities~\eqref{commutateur [Lu,s]}\,.
    In view of the second identity of Lemma~\ref{relation between <Sf|f> <u|f> and <1|f>}, we have  
\be
    \ps{S f_{n-1}}{u}\ps{u}{f_{n}}=0\,.
\ee 
If $\ps{u}{S f_{n-1}}=0\,,$ then by \eqref{commutateur [Lu,s]},
\begin{align*}
    L_u Sf_{n-1}
     =&\,S L_u f_{n-1}+S f_{n-1}
     \\
    =&\,(\nu_{n-1}+1)\, Sf_{n-1}
    \\
    =&\,\nu_{n}\, S f_{n-1}\,,
\end{align*}
as $\nu_{n}=\nu_{n-1}+1$\,.
Namely, $Sf_{n-1}\in E_{\nu_{n}}$ .
Let us move to the second case where $\ps{u}{f_{n}}=0\,.$  By the first identity of Lemma~\ref{relation between <Sf|f> <u|f> and <1|f>}\,,
$$
    \nu_{n}\ps{f_{n}}{1}=0\,.
$$ 
Therefore, since $\nu_{n}\neq 0\,,$   there exists $g_{n}\in H^1_+(\T)$ such that $f_{n}=S g_{n}$\,.  Using again  the commutator identity \eqref{commutateur [Lu,s]}, we have
$$
    S L_ug_{n}=(\nu_{n}-1)S g_{n}\,.
$$
Applying $S^*$ to both sides of the latter identity, and  using the fact that $S^*S=\Id\,$,  and as $\nu_{n}=\nu_{n-1}+1\,$,  we find  
$$
    L_ug_{n}=\nu_{n-1}\,g_{n}\,.
$$
That is, $g_{n}\in E_{\nu_{n-1}}$\;, and so $f_{n}\in S E_{\nu_{n-1}}$ .
\vskip0.25cm
Besides, note that for the \textit{defocusing equation}, the vector spaces $E_{\la_n}$ are of dimension one, thanks to Proposition~\ref{multiplicite val propres}.  Consequently, the results $[S f_{n-1}\in E_{\la_{n}}]$ or $[f_{n}\in S E_{\la_{n-1}}]\,,$ leads to $S f_{n-1}\parallelsum f_n  \,.$

\end{proof}
\vskip0.25cm

In the sequel, we denote by $\mathcal{I}(u)$ the set of
\be\label{I}
    \I(u):=\lracc{n\in\N\;|\;\ps{Sf_{n-1}}{f_n}=0}\,.
\ee

\begin{lemma}\label{I=vide}
\textit{Defocusing case.} For any $u\in \Htwo\,$, the set $\mathcal{I}(u)$ is empty.

\vskip0.25cm
\noindent
\textit{Focusing case.}
Given $u\in H^2_+(\T)\,,$ 
    let $m\in\I(u)\,.$ Assume that  the eigenvalues $\nu_m$ and $\nu_{m-1}$ are simple, Then, either 
    \[
        \nu_{m-1}+1=\nu_{m+1}\,, \qquad \text{with} \qquad Sf_{m-1}\in E_{\nu_{m+1}}
    \]
    or
    \[
        \nu_{m-2}+1=\nu_{m}\,, \qquad \text{with} \qquad S^*f_{m}\in E_{\nu_{m-2}}
    \]
    or
    \[
        \nu_{m}=0\,, \qquad \text{with} \qquad f_{m}\parallelsum 1\,.
    \]
\end{lemma}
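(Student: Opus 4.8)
The plan is to read off from the second algebraic identity of Lemma~\ref{relation between <Sf|f> <u|f> and <1|f>} a dichotomy, and then feed each alternative into the commutator identity \eqref{commutateur [Lu,s]} to manufacture an exact eigenvector of $\Lu$ (resp. $\Lutilde$) with a shifted eigenvalue, whose index is then pinned down by the gap bounds of Proposition~\ref{multiplicite val propres} together with the simplicity hypotheses. Concretely, taking $p=m-1$, $n=m$ in that identity gives $(\nu_m-\nu_{m-1}-1)\ps{Sf_{m-1}}{f_m}=-\ps{Sf_{m-1}}{u}\ps{u}{f_m}$, and since $m\in\I(u)$ annihilates the left-hand side we get $\ps{Sf_{m-1}}{u}\,\ps{u}{f_m}=0$. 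The whole proof then splits along the two factors, and the defocusing statement will fall out of the same computation by replacing $\Lu,\nu$ with $\Lutilde,\la$ and using \eqref{simplicite val prop Lu tilde} together with $\la_n\ge n\ge 1>0$ for $n\ge1$.

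First I would treat the case $\ps{Sf_{m-1}}{u}=0$. Then the rank-one term in \eqref{commutateur [Lu,s]} drops out and $\Lu Sf_{m-1}=S\Lu f_{m-1}+Sf_{m-1}=(\nu_{m-1}+1)Sf_{m-1}$, so $Sf_{m-1}$ is a unit eigenvector for $\mu:=\nu_{m-1}+1$. Simplicity of $\nu_{m-1}$ gives $\mu>\nu_{m-1}$, while \eqref{gap-multiplicity2} (with $n=m-1$) gives $\mu\le\nu_{m+1}$, so $\mu\in\{\nu_m,\nu_{m+1}\}$; the value $\nu_m$ is impossible because simplicity of $\nu_m$ would force $Sf_{m-1}\parallelsum f_m$, contradicting $m\in\I(u)$. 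Hence $\nu_{m-1}+1=\nu_{m+1}$ and $Sf_{m-1}\in E_{\nu_{m+1}}$, which is the first alternative.

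Next I would treat $\ps{u}{f_m}=0$, which is the delicate case. Here I decompose $f_m=\ps{f_m}{1}\,1+Sg$ with $g:=S^*f_m$ (using $S^*1=0$ and $S^*S=\Id$), and insert this into \eqref{commutateur [Lu,s]} together with $\Lu 1=-\ps{1}{u}u$. The key observation is that $\ps{Sg}{u}=\ps{f_m}{u}-\ps{f_m}{1}\ps{1}{u}=-\ps{f_m}{1}\ps{1}{u}$ since $\ps{f_m}{u}=0$, so the two rank-one contributions $\ps{\cdot}{S^*u}u$ cancel exactly, leaving $\nu_m f_m=S\Lu g+Sg$. Matching the constant component against the component in the range of $S$ yields simultaneously $\nu_m\ps{f_m}{1}=0$ and $\Lu g=(\nu_m-1)g$. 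If $g=0$ then $f_m\parallelsum 1$ and then $\nu_m=0$ follows, which is the third alternative; if $g\neq0$ then $g=S^*f_m\in E_{\nu_m-1}$, and since $\nu_m-1<\nu_m$ while \eqref{gap-multiplicity2} (with $n=m-2$) gives $\nu_m-1\ge\nu_{m-2}$, the value $\nu_m-1$ lies in $\{\nu_{m-2},\nu_{m-1}\}$; the value $\nu_{m-1}$ is excluded because its simplicity would force $S^*f_m\parallelsum f_{m-1}$, hence $Sg\parallelsum Sf_{m-1}$ and $\ps{Sf_{m-1}}{f_m}\neq0$. Thus $\nu_m-1=\nu_{m-2}$ and $S^*f_m\in E_{\nu_{m-2}}$, the second alternative. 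In the defocusing case the identical computation applies, but now $\la_n>0$ forces $\ps{f_n}{1}=0$ and hence $g\neq0$, and simplicity via \eqref{simplicite val prop Lu tilde} lands the eigenvalue $\la_n-1$ exactly on $\la_{n-1}$, forcing $Sf_{n-1}\parallelsum f_n$; this contradicts $n\in\I(u)$ and proves $\I(u)$ is empty.

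The main obstacle is the case $\ps{u}{f_m}=0$: one must isolate the constant Fourier mode of $f_m$ from its shift and track the two rank-one terms so that they cancel, this cancellation being precisely where $\ps{u}{f_m}=0$ is used, and then correctly discriminate the index of the produced eigenvalue $\nu_m-1$ between $\nu_{m-2}$ and $\nu_{m-1}$. The threshold eigenvalue $\nu_m=0$ arising in the third alternative is exactly the degenerate situation $g=S^*f_m=0$, in which $f_m$ collapses onto the constant function. A minor point to verify separately is $m=1$, where $\nu_{m-2}$ is absent: there the subcase $g\neq0$ is ruled out directly (the only candidate $\nu_0$ is simple and again yields $Sf_0\parallelsum f_1$), so only the third alternative survives.
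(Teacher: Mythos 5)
Your proposal is correct and follows essentially the same route as the paper: the dichotomy $\ps{Sf_{m-1}}{u}\,\ps{u}{f_m}=0$ extracted from the second identity of Lemma~\ref{relation between <Sf|f> <u|f> and <1|f>}, the commutator identity \eqref{commutateur [Lu,s]} to manufacture the shifted eigenvectors $Sf_{m-1}$ and $S^*f_m$, and the gap bound \eqref{gap-multiplicity2} together with the simplicity hypotheses (and the collinearity exclusions forced by $m\in\I(u)$) to pin the produced eigenvalue to $\nu_{m+1}$, respectively $\nu_{m-2}$, with the degenerate case $S^*f_m=0$ giving $f_m\parallelsum 1$ and $\nu_m=0$, and with positivity of the $\la_n$ plus \eqref{simplicite val prop Lu tilde} killing every alternative in the defocusing case so that $\I(u)=\varnothing$. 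Your only deviation is cosmetic: where the paper gets $L_uS^*f_m=(\nu_m-1)S^*f_m$ in one line by applying the adjoint of \eqref{commutateur [Lu,s]} to $f_m$, you re-derive the same identity (together with $\nu_m\ps{f_m}{1}=0$) through the decomposition $f_m=\ps{f_m}{1}\,1+S(S^*f_m)$ and a cancellation of the two rank-one terms, and your separate treatment of the $m=1$ edge case is a small precision the paper leaves implicit.
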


\begin{Rq}\hfill \label{Rq I=vide}
\begin{enumerate}[label=(\roman*),itemsep=6pt]
    \item  Observe that in the focusing case, if $\|u\|_{L^2}^2<\frac12$\,, then by inequality~\eqref{simplicite u L2 <1}\,, 
    \[
        \nu_{n}>\nu_{n-1}+\frac{1}{2}\,, \qquad \qlq n\in\N\,.
    \]
    Hence, for such $u\,,$ if $m\in \I(u)$ then the only possible choice is to have $\nu_m=0$ with $f_m\,\parallelsum\, 1\,.$ 
    In other words, 
    for $\|u\|_{L^2}^2<\frac12\,,$ we have, either $\I(u)=\varnothing\,,$ 
    or $\I(u)=\lracc{m}$ and in such case $\nu_m=0$ and $f_m\parallelsum1\,.$
    \item  For any $u\in \Htwo\,,$ the set $\I(u)$ in the focusing case is of finite cardinal, since by inequality \eqref{liminf} we have  $\nu_n>\nu_{n-1}+\frac{1}{2}$ and $\nu_n\neq 0$ for all $n$ large enough.
\end{enumerate}

\end{Rq}

\begin{proof}
    \textit{Focusing case}. Let $m\in \mathcal{I}(u)\,.$ By the 
    second identity of Lemma~\ref{relation between <Sf|f> <u|f> and <1|f>}\,, 
    \[
        \ps{Sf_{m-1}}{u} \ps{u}{f_m}=0\,.
    \]
    If $\ps{Sf_{m-1}}{u}=0\,,$ then applying the commutator identity \eqref{commutateur [Lu,s]}\,,
    \be\label{Sf[m-1] eigenfunction to Lu}
        \Lu S f_{m-1}=(\nu_{m-1}+1)Sf_{m-1}\,.
    \ee
   Namely, $\nu_{m-1}+1$ is an eigenvalue of $L_u$ and $S f_{m-1}$ is a corresponding eigenfunction.
   Since $\nu_m$ is simple, then $Sf_{m-1}$ cannot be collinear to $f_m$ as $\ps{Sf_{m-1}}{f_m}=0$ for $m\in\I(u)\,.$ Therefore, by \eqref{gap-multiplicity2}\,, 
   $$
        \nu_{m-1}+1=\nu_{m+1}\,.
    $$
   \vskip0.25cm
   \noindent
   If $\ps{u}{f_m}=0\,,$ then by applying the adjoint of the commutator identity  \eqref{commutateur [Lu,s]}\,, 
    \[
        S^*{L}_{u}  = {L}_{u} S^*+S^* +\ps{\,\cdot}{u}S^*u\,,
    \]
     we infer, 
    \[
        {L}_{u}\, S^*f_m=(\nu_m-1)S^*f_m\,.
     \]
    That is, if $S^*f_m\neq 0\,,$ then $S^*f_m$ is an eigenfunction of $L_u$ associated with the eigenvalue $\nu_m-1\,.$
    Recall, we have by assumption that $\nu_{m-1}$ is simple, and since $S^*f_m$ cannot be collinear to $f_{m-1}$ as $m\in\I(u)$\,,  then  
    $$
        \nu_m-1=\nu_{m-2}\,,
    $$ 
    thanks to \eqref{gap-multiplicity2}\,.
    It remains to study the case where $S^*f_m=0\,,$ i.e., $f_m\parallelsum 1\,.$ 
    For that case, we have thanks to  the first identity of Lemma~\ref{relation between <Sf|f> <u|f> and <1|f>}\,,  $\nu_m=0$ as 
    $\ps{u}{f_m}=0\,.$ 
    \vskip0.25cm
    \noindent
    \textit{The defocusing case.} Suppose that there exists $m\in\I(u)$\,. Then, using the same analysis as in the focusing case, we infer that, either $\la_{m-1}+1=\la_{m+1}$ or $\la_{m-2}+1=\la_{m}$ or $\la_m=0\,.$ 
    However, recall that $\la_n\geq \la_{n-1}+1$ for all $n\in\N$ (inequality~\ref{simplicite val prop Lu tilde})\,, thus the first two cases cannot occur. 
    In addition, since $\Lutilde$ is a non negative operator, where all the eigenvalues satisfy the inequality~\eqref{simplicite val prop Lu tilde}\,, then $\la_m=0$ implies $m=0\notin\I(u)\,.$
    
\end{proof}

\begin{corollary}\label{<u|f[n]> <=> gamma[n]=0}
    For all $n\geq 1\,,$
    $$
        \la_{n}=\la_{n-1}+1\iff \ps{u}{f_n}=0\,.
    $$
    In addition, 
    \[
        \nu_{n}=\nu_{n-1}+1\,, \ \qlq n\geq N_1 \quad\iff\quad \ps{u}{f_n}=0\,, \ \qlq n\geq N_2\,.
    \]
\end{corollary}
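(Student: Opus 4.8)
The plan is to read off both equivalences from the second identity of Lemma~\ref{relation between <Sf|f> <u|f> and <1|f>} specialized to $p=n-1$, which ties the gap $\la_n-\la_{n-1}$ (resp. $\nu_n-\nu_{n-1}$) to the product $\ps{Sf_{n-1}}{u}\,\ps{u}{f_n}$, and then to upgrade ``the product vanishes'' into ``$\ps{u}{f_n}=0$'' by means of the collinearity $Sf_{n-1}\parallelsum f_n$ furnished by Proposition~\ref{gap=0 and eigenspaces}. Concretely, setting $p=n-1$ gives, in the defocusing case,
\[
    (\la_n-\la_{n-1}-1)\,\ps{Sf_{n-1}}{f_n}=\ps{Sf_{n-1}}{u}\,\ps{u}{f_n}\,,
\]
and the analogous relation with an extra minus sign on the right in the focusing case.

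For the defocusing equivalence I would argue as follows. Lemma~\ref{I=vide} asserts $\I(u)=\varnothing$, i.e. $\ps{Sf_{n-1}}{f_n}\neq0$ for every $n\geq1$. Hence if $\ps{u}{f_n}=0$ the right-hand side vanishes and the nonzero factor $\ps{Sf_{n-1}}{f_n}$ forces $\la_n=\la_{n-1}+1$. Conversely, if $\la_n=\la_{n-1}+1$ then Proposition~\ref{gap=0 and eigenspaces} gives $Sf_{n-1}\parallelsum f_n$; writing $f_n=c\,Sf_{n-1}$ with $c\neq0$ and substituting $\ps{u}{f_n}=\bar c\,\overline{\ps{Sf_{n-1}}{u}}$ into the (now vanishing) right-hand side turns it into $\bar c\,\va{\ps{Sf_{n-1}}{u}}^2=0$, whence $\ps{Sf_{n-1}}{u}=0$ and therefore $\ps{u}{f_n}=0$. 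This closes the first equivalence.

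For the focusing case the same mechanism applies, but only asymptotically, since the eigenvalues may have multiplicity two and $\I(u)$ need not be empty. The relevant inputs are that $\I(u)$ is finite (Remark~\ref{Rq I=vide}), that the eigenvalues are eventually simple by \eqref{liminf}, and that $\nu_n\to+\infty$, so $\nu_n\neq0$ for $n$ large. For the reverse implication, if $\ps{u}{f_n}=0$ for all $n\geq N_2$, then for every such $n$ with $n\notin\I(u)$ the nonzero factor $\ps{Sf_{n-1}}{f_n}$ forces $\nu_n=\nu_{n-1}+1$; as $\I(u)$ is finite, this holds for all $n\geq N_1:=\max\{N_2,\ \max\I(u)+1\}$. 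For the forward implication, suppose $\nu_n=\nu_{n-1}+1$ for $n\geq N_1$ and choose $N_2\geq N_1$ large enough that $\nu_{n-1},\nu_n$ are simple and $\nu_n\neq0$. Then Proposition~\ref{gap=0 and eigenspaces} yields $[Sf_{n-1}\in E_{\nu_n}]$ or $[f_n\in SE_{\nu_{n-1}}]$, and simplicity of $\nu_{n-1},\nu_n$ collapses either alternative to $Sf_{n-1}\parallelsum f_n$; the substitution of the previous paragraph then gives $\ps{u}{f_n}=0$ for all $n\geq N_2$.

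The main obstacle is precisely the passage from the vanishing product $\ps{Sf_{n-1}}{u}\,\ps{u}{f_n}=0$ to the vanishing of the single factor $\ps{u}{f_n}$: a priori only one of the two factors need vanish, and the argument genuinely requires the collinearity $Sf_{n-1}\parallelsum f_n$ to force both. In the focusing case the additional difficulty is the bookkeeping of the various ``$n$ large enough'' thresholds, ensuring simultaneously that the eigenvalues are simple (to deduce collinearity from the multiplicity-$\leq 2$ statement of Proposition~\ref{gap=0 and eigenspaces}) and nonzero (to apply that proposition at all), which is exactly why the statement is formulated with two separate ranks $N_1,N_2$ rather than a single cutoff.
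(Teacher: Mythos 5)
Your proof is correct and takes essentially the same route as the paper: the second identity of Lemma~\ref{relation between <Sf|f> <u|f> and <1|f>} with $p=n-1$, Lemma~\ref{I=vide} (non-vanishing of $\ps{Sf_{n-1}}{f_n}$) for the converse, the collinearity $Sf_{n-1}\parallelsum f_n$ from Proposition~\ref{gap=0 and eigenspaces} for the forward direction, and in the focusing case the asymptotic simplicity \eqref{liminf} together with the finiteness of $\I(u)$ from Remark~\ref{Rq I=vide}. You merely make explicit two steps the paper compresses, namely the substitution $f_n=c\,Sf_{n-1}$ turning the vanishing product into $\bar c\,\va{\ps{Sf_{n-1}}{u}}^2=0$, and the concrete choice of the thresholds $N_1,N_2$.
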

\begin{Rq}
      We refer to the Appendix~\ref{Appendix 2} for an example that shows that $N_2$ is not necessarily equal to $N_1$\,.
\end{Rq}

\begin{proof}
For the \textit{defocusing case}.
    Suppose that $\la_{n}=\la_{n-1}+1\,.$  Then, from one side we have by Proposition~\ref{gap=0 and eigenspaces}\,, $Sf_{n-1}\parallelsum f_n\,,$  and from the other hand, we infer by the second identity of  Lemma~\ref{relation between <Sf|f> <u|f> and <1|f>}\,,
$$
    \ps{S f_{n-1}}{u}\ps{u}{f_{n}}=0\,.
$$
That is, $\ps{u}{f_{n}}=0\,.$
The converse is a direct consequence of the second identity of Lemma~\ref{relation between <Sf|f> <u|f> and <1|f>} and the previous lemma. 

\vskip0.25cm
For the \textit{focusing case,} the same analysis can be applied. However, it should be noted that, since not all the eigenvalues $(\nu_n)$ satisfy $\nu_{n}>\nu_{n-1}+\frac12\,,$  and $\nu_n\neq 0\,,$ for all $n\in\N\,$,
but only for large $n\,,$ thanks to Proposition~\ref{multiplicite val propres}\,,
then the equivalence holds for $n$
sufficiently large. 

\end{proof}

\section{\textbf{Traveling waves for the defocusing (CS\texorpdfstring{$^-$}{-})}}
\label{traveling waves for (CS-)}

\subsection{Spectral Characterization}
\label{spectral characterization of traveling waves of (CS-)}
One way to understand the behavior of a linear PDE's solution is to consider its Fourier transform. 
 Specifically, on the periodic domain $\T$, this consists of computing the inner product with $\ps{\,\cdot}{\eee^{inx}}$ for all $n\in\Z$.
The main idea behind this approach is to ``diagonalize'' the problem in the $(\eee^{inx})$--basis, which facilitates solving the equation. 
However, by considering the Calogero--Sutherland DNLS equation~\eqref{CS-defocusing}\,, we are dealing with a nonlinear \textit{integrable} PDE, which can also 
be ``diagonalized'' in some coordinate system (think about the Birkhoff coordinates).
Thus, by imitating the idea of the linear case, we suggest taking the inner product of the \eqref{CS-defocusing}--equation with an appropriate orthonormal basis of $\Ltwo\,.$ 
Before proceeding, observe that the defocusing  Calogero--Sutherland DNLS equation can be rewritten in terms of the Lax pair as \cite[Lemma~2.4, Lemma 5.2]{Ba23}
\be\label{rewriting CS-def}
    \p_t u=\tilde{B}_u u -i\tilde{L}^2_uu\,.
\ee 
This motivates the choice of the following orthonormal basis of $\Ltwo\,$.

\begin{defi}
		\label{orthonormal Basis gnt}
    Given $u\in \mathcal C_{\,t}\Htwo_x\,,$ let $(g_n^{\,t})$ be the evolving orthonormal basis of $L^2_+(\T)$ defined along the curve $t\mapsto u(t)$ as
		\begin{align}\label{Cauchy pb}
		\begin{cases}
			\p_{t} \,\gnt&=\,\tilde{B}_{u(t)}\,\gnt 
			\\
			{\gnt\,}_{|_{t=0}}&=\,\fnuzero
		\end{cases}\,,\qquad \qlq n\in\Nzero\,,
		\end{align}
		 where $(\fnuzero)$ is an orthonormal basis of $\Ltwo$ made up of the eigenfunctions of  $\tilde{L}_{u_0}$ at $t=0\,,$ and $\tilde{B}_{u(t)}$ is the skew--adjoint operator defined in \eqref{Lax operators defoc case}\,.
\end{defi}

\begin{Rq}\label{Rq gnt vect. propre+ evolution<u|fn>}
    Note that the $(\gnt)$ satisfies for all $n\in\Nzero$ \cite[Lemma 4.1]{Ku06}\,,
    \be\label{gnt vect. propre}
        \tilde{L}_{u(t)}\, \gnt =\la_n\, \gnt\,.
    \ee
    Therefore, as it was established in~\cite[Lemma 3.6]{Ba23}\,, by taking the inner product of \eqref{rewriting CS-def} with the $\gnt$ and using that $\Lutilde$ is a self--adjoint operator and $\Butilde$ is skew--symmetric, we find
    $$
        \p_t\ps{u(t)}{\gnt}=-i\la_n^2 \ps{u(t)}{\gnt}\,,
    $$
    or
    \be\label{evolution <u|gn>}
        \ps{u(t)}{\gnt}=\ps{u_0}{\fnuzero}\,\eee^{-i\la_n^2 \,t}\,.
    \ee
\end{Rq}

\begin{lemma}\label{evolution in the gnt}
    For any $ u\in \mathcal C_{\,t}\Htwo_x$ solution of \eqref{CS-defocusing} and for all $n\,,p\in\Nzero\,,$
    \begin{align}\label{evolution <1|fn>}
        \ps{1}{\gnt}=&\,\ps{1}{\fnuzero}\,\eee^{-i\la_n^2t}\ ,
        \\
        \ps{S\gpt}{\gnt}=&\,\ps{S\fpuzero }{\fnuzero}\,\eee^{i((\la_p+1)^2-\la_n^2)\,t}\,.
        \notag
    \end{align}
\end{lemma}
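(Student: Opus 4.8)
The plan is to prove each identity by differentiating it in time, showing that the quantity solves a linear ODE whose coefficient is constant along the flow, and then integrating. Since $\gnt_{|t=0}=\fnuzero$ and $\gpt_{|t=0}=\fpuzero$, the initial values are exactly $\ps{1}{\fnuzero}$ and $\ps{S\fpuzero}{\fnuzero}$, so the ODE pins down the stated formulas. Throughout one uses $\p_t\gnt=\Buttilde\gnt$ from Definition~\ref{orthonormal Basis gnt}, the eigenfunction relation $\Luttilde\gnt=\la_n\gnt$ from \eqref{gnt vect. propre}, the self-adjointness of $\Luttilde$, and the skew-adjointness of $\Buttilde$.

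For the first identity, the crucial observation is the operator relation $\Buttilde 1=i\Luttilde^2 1$. I would establish it by a direct Toeplitz computation on the constant function: since $\p_x\bar u$ carries only strictly negative frequencies one has $T_{\p_x\bar u}1=\Pi(\p_x\bar u)=0$, while $T_{\bar u}1=\Pi(\bar u)=\ps{1}{u}$ and $T_u1=\Pi(u)=u$. Expanding $\Buttilde=-T_uT_{\p_x\bar u}+T_{\p_xu}T_{\bar u}+i(T_uT_{\bar u})^2$ on $1$ gives $\Buttilde 1=\ps{1}{u}\p_x u+i\ps{1}{u}\Pi\big(u\Pi(\va{u}^2)\big)$, while expanding $i\Luttilde^2 1=i\ps{1}{u}\Luttilde u$ (using $\Luttilde 1=\ps{1}{u}u$ as in the proof of Lemma~\ref{relation between <Sf|f> <u|f> and <1|f>}) yields the very same expression. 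With this in hand, and using $\Luttilde\gnt=\la_n\gnt$, I compute
\[
\frac{d}{dt}\ps{1}{\gnt}=\ps{1}{\Buttilde\gnt}=-\ps{\Buttilde 1}{\gnt}=-i\ps{\Luttilde^2 1}{\gnt}=-i\la_n^2\ps{1}{\gnt}\,,
\]
and integrating gives $\ps{1}{\gnt}=\ps{1}{\fnuzero}\eee^{-i\la_n^2t}$.

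For the second identity both $\gpt$ and $\gnt$ evolve, so differentiating and using the skew-adjointness of $\Buttilde$ gives
\[
\frac{d}{dt}\ps{S\gpt}{\gnt}=\ps{S\Buttilde\gpt}{\gnt}+\ps{S\gpt}{\Buttilde\gnt}=\ps{(S\Buttilde-\Buttilde S)\gpt}{\gnt}\,.
\]
The operator $S\Buttilde-\Buttilde S$ is obtained from the commutator identity \eqref{commutateur [Bu,s]} by taking Hilbert-space adjoints: using $\Buttilde^*=-\Buttilde$, $\Luttilde^*=\Luttilde$ and $S^{**}=S$, the relation $[S^*,\Buttilde]=i\big(S^*\Luttilde^2-(\Luttilde+\Id)^2S^*\big)$ transposes to $S\Buttilde-\Buttilde S=-i\big(\Luttilde^2S-S(\Luttilde+\Id)^2\big)$. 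Substituting this, then using $(\Luttilde+\Id)^2\gpt=(\la_p+1)^2\gpt$ and moving $\Luttilde^2$ onto $\gnt$ by self-adjointness, I obtain
\[
\frac{d}{dt}\ps{S\gpt}{\gnt}=-i\big(\la_n^2-(\la_p+1)^2\big)\ps{S\gpt}{\gnt}=i\big((\la_p+1)^2-\la_n^2\big)\ps{S\gpt}{\gnt}\,,
\]
which integrates to the claimed formula.

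The main obstacle is the bookkeeping. Verifying the algebraic identity $\Buttilde 1=i\Luttilde^2 1$ requires handling the Toeplitz operators and the Szeg\H{o} projector carefully, in particular the cancellation $T_{\p_x\bar u}1=0$; and deriving $S\Buttilde-\Buttilde S$ from \eqref{commutateur [Bu,s]} demands close attention to the signs arising from the skew-adjointness of $\Buttilde$ and from the conjugation of the scalar $i$. Throughout, one must also track the antilinearity of $\ps{\cdot}{\cdot}$ in its second slot when transferring operators and the factor $i$ between the two arguments.
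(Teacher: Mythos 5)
Your proof is correct and follows essentially the same route as the paper: the first identity rests on the same algebraic fact $\Buttilde 1=i\Luttilde^2 1$ (verified by the same Toeplitz computation), and the second on the commutator identity \eqref{commutateur [Bu,s]} combined with self-adjointness of $\Luttilde$ and skew-adjointness of $\Buttilde$. The only cosmetic difference is that the paper differentiates the conjugate quantity $\ps{\gnt}{S\gpt}$ and uses $[S^*,\Buttilde]$ directly, whereas you differentiate $\ps{S\gpt}{\gnt}$ and first transpose the commutator identity to $S\Buttilde-\Buttilde S=-i\big(\Luttilde^2S-S(\Luttilde+\Id)^2\big)$ --- an equivalent formulation.
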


\begin{proof}
    By Definition~\ref{orthonormal Basis gnt}\,, and since $\Butilde$ is skew-symmetric operator,
    $$
    \p_t \ps{1}{\gnt}=\ps{1}{\Butilde\gnt}=-\ps{\Butilde1}{\gnt}\,,
    $$
    where by \eqref{Lax operators defoc case}\,,
    \begin{align*}
        \tilde{B}_u1
        =&\,-T_uT_{\partial_x\bar u}1+T_{\partial_xu}T_{\bar u}1 +i(T_uT_{\bar u})^21
        \\
        =&\,\ps{1}{u}\big(\p_xu+iT_uT_{\overline u}\,u\big)\,.
    \end{align*}
    Note that $\Lutilde1=-i\p_x1 + T_uT_{\bar u}1=\ps{1}{u}u\,.$  Therefore, $\tilde{B}_u1=i\Lutilde^21$ and 
    $$
        \p_t\ps{1}{\gnt}=-i\ps{L^2_u1}{\gnt}=-i\la_n^2\ps{1}{\gnt}\,.
    $$
    This achieves the proof of the first point.
    To prove the second one, we proceed with the same manner. By Definition~\ref{orthonormal Basis gnt}\,,
    $$
        \p_t\ps{\gnt}{S\gpt}=\ps{\tilde{B}_u\gnt}{S\gpt}+\ps{\gnt}{S\tilde{B}_u \gpt}=\ps{[S^*, \tilde{B}_u]\,\gnt}{\,\gpt}.
    $$ 
    Hence, applying the commutator identity~\eqref{commutateur [Bu,s]}\,, and since $\Lutilde$ is a self--adjoint operator, we infer
    \begin{align*}
        \p_t\ps{\gnt}{S \gpt}=&\,i\,\big\langle \big( S^*\Lutilde^2 \,-\, (\Lutilde+\Id)^2S^* \big) \gnt\;\big|\;\gpt\big\rangle
        \\
    =&\,i\,(\la_n^2+(\la_p+1)^2)\ps{\gnt}{S\gpt}\,.
    \end{align*}
    Therefore, 
    $$
        \ps{\gnt}{S \gpt}=\ps{\fnuzero}{S \fpuzero}\,\eee^{i(\la_n^2-(\la_p+1)^2)t}\,.
    $$
\end{proof}

\begin{Rq}\label{formule explicite en terme matricielle}
    The consideration of the evolution of $\ps{u}{\gnt}$, $\ps{1}{\gnt}$, and $\ps{S\gpt}{\gnt}$ is motivated by the fact that any element $u$ of the Hardy space can be written as follows.
     \begin{lemma}[\cite{GK21,GMR16}] \label{formule inversion spectrale}
     For any $u\in\Ltwo\,,$
        \bes
		u(z)=\ps{(\Id-zS^*)^{-1}\,u}{1}\,, \qquad z\in\D\,,
    \ees
    where $S^*$ is the adjoint operator of $S$ in $\Ltwo\,.$
    \end{lemma}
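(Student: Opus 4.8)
The plan is to read the resolvent $(\Id-zS^*)^{-1}$ as a generating function encoding the Fourier coefficients of $u$, so that pairing it against the constant $1$ reconstructs the power series of $u$ on the disc. First I would record the action of the backward shift $S^*$ on the Fourier basis: since $S\eee^{ikx}=\eee^{i(k+1)x}$, taking adjoints (recall $S^*h=\Pi(\eee^{-ix}h)$ from \eqref{adjoint-shift}) gives $S^*\eee^{ikx}=\eee^{i(k-1)x}$ for $k\geq1$ and $S^*1=0$; that is, $S^*$ is the backward shift on Fourier modes. Iterating, $(S^*)^n u=\sum_{k\geq0}\fr{u}(k+n)\eee^{ikx}$ for every $n\in\Nzero$, and in particular
\bes
    \ps{(S^*)^n u}{1}=\ps{u}{S^n1}=\ps{u}{\eee^{inx}}=\fr{u}(n)\,.
\ees

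Second, since $S$ is an isometry of $\Ltwo$, its adjoint is a contraction, $\|S^*\|\leq1$, so for any fixed $z\in\D$ we have $\|zS^*\|=\va{z}<1$ and $\Id-zS^*$ is invertible, with inverse given by the Neumann series
\bes
    (\Id-zS^*)^{-1}=\sum_{n\geq0}z^n(S^*)^n\,,
\ees
converging in operator norm. Applying this to $u$ and pairing with $1$, the boundedness of the linear functional $\ps{\,\cdot}{1}$ on $\Ltwo$ lets me exchange the sum and the inner product, so that
\bes
    \ps{(\Id-zS^*)^{-1}u}{1}=\sum_{n\geq0}z^n\,\ps{(S^*)^n u}{1}=\sum_{n\geq0}\fr{u}(n)\,z^n\,,
\ees
and by the isometric isomorphism identifying an element of the Hardy space with its analytic extension on $\D$, the right-hand side is exactly $u(z)$, which concludes the argument.

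There is essentially no hard step here: the only point requiring genuine care is the justification of the term-by-term pairing, which is immediate from the operator-norm convergence of the Neumann series (valid precisely because $\va{z}<1$ while $\|S^*\|\leq1$) combined with the continuity of $\ps{\,\cdot}{1}$. One could instead bypass the operator-norm estimate altogether by substituting the expansion $u=\sum_k\fr{u}(k)\eee^{ikx}$ and summing the resulting geometric series coefficientwise, but I find the Neumann-series route cleaner and more structural, since it directly exhibits the resolvent as the mechanism producing the Taylor coefficients $\fr{u}(n)$.
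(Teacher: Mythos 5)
Your proof is correct and follows essentially the same route as the paper: both rest on the identity $\ps{(S^*)^n u}{1}=\ps{u}{S^n 1}=\fr{u}(n)$ combined with the Neumann series for $(\Id-zS^*)^{-1}$, the only difference being that the paper starts from the power series $u(z)=\sum_{k}\fr{u}(k)z^k$ and resums it into the resolvent, whereas you expand the resolvent first. Your explicit justification of the operator-norm convergence (via $\|S^*\|\leq 1$ and $\va{z}<1$) and of the term-by-term pairing makes precise a step the paper leaves implicit, but it is the same argument.
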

    Therefore, by expressing the operator
    $S^*\,$,  and the two vectors $u$ and $1$  in their matrix representations  with respect to  the $(\gnt)$--basis, we obtain
    \be
      u(t,z)=\ps{(\Id-z M)^{-1}\,X}{Y}\,, \qquad z\in\D\,,
\ee
where   $X\,,$ $Y $ are infinite column vectors and $M$ is the infinite matrix representation~:
$$
    X:=\big(\ps{u}{\gnt}\big)\,,
    \quad  
    Y:=\big(\left\langle 1 \mid \gnt\right\rangle\big)\,,
    \quad M:=\Big(\left\langle \gnt\mid S \gnt\right\rangle\Big)\,.
$$ 
\begin{proof}[Proof of Lemma~\ref{formule inversion spectrale} ] (\cite{GK21})
     The idea is to observe that any element $u$ of the Hardy space $\Ltwo$ can be read as an analytic function on the open unit disc $\D$\,, whose trace on the boundary $\partial\D$ is in $L^2\;$~\footnote{For a simple introduction to the different definitions of Hardy space, we refer to \cite[Chapter 3.]{GMR16}}. Thus, for any $z\in\D\,,$
    \[
        u(z)=\sum_{k\in\Nzero}\widehat{u}(k) z^k=\sum_{k\in\Nzero} \ps{u}{S^k1}z^k=\sum_{k\in\Nzero}\ps{(S^*)^ku}{
        1}z^k\,.
    \]
    As a result, by Neumann series,
    \[
        u(z)=\left\langle(\Id-z S^*)^{-1} u \mid 1\right\rangle.
    \]
\end{proof}
\end{Rq}

\vskip0.25cm
 At this stage, we consider $u(t):=u_0(x-ct)$ to be a traveling wave to the Calogero--Sutherland DNLS equation \eqref{CS-defocusing}.
 We denote, for all $c,t\in\R\,,$ by $\tau_{ct}$ the isometric linear map 
\[
    \tau_{ct}: \Ltwo\to\Ltwo\,,
    \qquad \tau_{ct}\, u_0(x)=u_0(x-ct)\,.
\]
Our aim for this subsection is to prove the following Theorem.
\begin{theorem}\label{characterization u in the phase space <u|fn}
    Let $u(t):=\tau_{ct}\,u_0\,$ be a traveling wave to the  \eqref{CS-defocusing}--equation. Then there exists at most one $N\in\N$ such that 
    \bes
            \ps{u_0}{f_{N}^{\,u_0}}\neq0\,. 
    \ees  
    Moreover, the speed $c$ is given by
    \[
        c=1+\frac{2}{N}\sum_{k=0}^{N-1}\la_k\,.
    \]
\end{theorem}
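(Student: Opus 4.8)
The plan is to exploit the explicit time-evolution formulas derived in Lemma~\ref{evolution in the gnt} together with the fact that a traveling wave has a particularly rigid time dependence: $u(t)=\tauct u_0$ means that in any fixed orthonormal basis the coordinates of $u(t)$ can only rotate by a single common phase factor $\eee^{-iNct}$ coming from the spatial translation (after accounting for the Fourier-mode structure). The key tension to exploit is that Remark~\ref{Rq gnt vect. propre+ evolution<u|fn>} forces each coordinate $\ps{u(t)}{\gnt}$ to evolve with its \emph{own} frequency $\la_n^2$, namely $\ps{u(t)}{\gnt}=\ps{u_0}{\fnuzero}\eee^{-i\la_n^2 t}$, whereas the translation structure of a traveling wave demands all nonzero coordinates share a single frequency. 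Reconciling these two facts is what pins down the number of nonzero coefficients.

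First I would make the translation structure explicit. Since $u(t,x)=u_0(x-ct)$, applying the Szeg\H{o}-type inversion formula of Lemma~\ref{formule inversion spectrale}, or more simply comparing Fourier coefficients, a spatial translation by $ct$ multiplies the Fourier mode $\widehat{u}(k)$ by $\eee^{-ikct}$. I would then relate this to the spectral coordinates: using the commutator identity \eqref{commutateur [Lu,s]} and the structure of $\Lutilde$, one expects that $\tauct$ acts on the eigendata in a controlled way, so that the abstract evolution \eqref{evolution <u|gn>} of $\ps{u(t)}{\gnt}$ must match the translation phase. Concretely, I would argue that $\la_n^2 t$ and the translation phase must agree modulo the $2\pi\Z$-periodicity, forcing, for every pair $n,p$ with both $\ps{u_0}{\fnuzero}\neq0$ and $\ps{u_0}{\fpuzero}\neq0$, a relation of the form $\la_n^2-\la_p^2=c(\la_n-\la_p)$ (after identifying the frequency attached to each coordinate via the mode index). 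Because Proposition~\ref{multiplicite val propres} gives $\la_{n+1}\geq\la_n+1$, the map $n\mapsto\la_n$ is strictly increasing, so $\la_n^2-c\la_n$ being constant across all active indices is a quadratic taking a fixed value at distinct points — this can happen for at most two values of $\la_n$, and a closer bookkeeping (tracking which index carries the genuine translation frequency) collapses this to \emph{at most one} admissible $N$.

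For the speed formula, once it is established that there is a single index $N$ with $\ps{u_0}{f_N^{\,u_0}}\neq0$, I would compute $c$ from a trace-type identity. The natural route is to use Lemma~\ref{relation between <Sf|f> <u|f> and <1|f>} and the reconstruction formula of Remark~\ref{formule explicite en terme matricielle} to express the phase velocity of the single active Fourier block in terms of the eigenvalues $\la_0,\dots,\la_{N-1}$ that lie below the active level. Matching the overall translation frequency $Nc$ of the reconstructed $u(t,z)=\ps{(\Id-zM)^{-1}X}{Y}$ — whose only surviving time dependence must be the coherent rotation $\eee^{-iN^2ct}$-type factor — against the accumulated phases $\eee^{-i\la_k^2 t}$ and $\eee^{i((\la_p+1)^2-\la_n^2)t}$ from Lemma~\ref{evolution in the gnt} yields a linear relation that, upon summing over $k=0,\dots,N-1$, gives $c=1+\frac{2}{N}\sum_{k=0}^{N-1}\la_k$.

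The main obstacle I anticipate is the bookkeeping in the second step: correctly identifying \emph{which} spectral frequency each nonzero coordinate carries and showing rigorously that two distinct active indices are incompatible with the single-phase translation constraint, rather than merely restricting to two. The quadratic argument alone permits two roots, so the genuine work is to rule out the second root using the finer gap information ($\la_{n+1}\geq\la_n+1$ together with the shift relation $\ps{S\fpuzero}{\fnuzero}$ from the second identity of Lemma~\ref{evolution in the gnt}), and to track the consistency of the phases of the matrix entries $M_{np}$ so that the reconstructed profile is genuinely a pure translate. Getting the speed constant right — in particular the additive $1$ and the factor $2/N$ — will hinge on carefully accounting for the $+1$ shifts in the frequencies $(\la_p+1)^2$ appearing in the off-diagonal evolution.
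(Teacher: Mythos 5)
There is a genuine gap at the heart of your argument: the constraint you attach to each active index is never derived and is in fact not the right one. You claim that two active coordinates $n,p$ force $\la_n^2-\la_p^2=c(\la_n-\la_p)$, ``after identifying the frequency attached to each coordinate via the mode index'' --- but the eigenfunctions $\fnuzero$ are not Fourier modes, so there is no mode index equal to $\la_n$, and the translation phase carried by the spectral coordinate $\ps{u_0}{\fnuzero}$ is not $\eee^{-i\la_n ct}$. The rigorous bridge between translation and the evolving eigenbasis is Proposition~\ref{tauct fn}, $\tauct\fnuzero=\eee^{i\theta_n(t)}\gnt$ (available because the $\la_n$ are simple), and the phases $\theta_n(t)$ are \emph{not} determined one index at a time: the only handle on a non-active index is the third identity of Corollary~\ref{evolution theta[n]}, which couples $\theta_n$ to $\theta_{n-1}$ through $\theta_n(t)=\big((\la_{n-1}+1)^2-\la_n^2\big)t-ct+\theta_{n-1}(t)$, and whose applicability for \emph{every} $n$ rests on Lemma~\ref{I=vide} (in the defocusing case $\I(u)=\varnothing$, i.e. $\ps{Sf_{n-1}}{f_n}\neq0$ for all $n$) --- an ingredient entirely absent from your plan. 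Telescoping this recurrence gives $\theta_n(t)=\la_0^2t+2t\sum_{k=0}^{n-1}\la_k+nt-\la_n^2t-nct+\theta_0(t)$, and one still needs the anchor $\theta_0(t)=-\la_0^2t$, which requires its own argument when $\ps{u_0}{f_0}=0$: one must show $\ps{1}{f_0}\neq0$, since $f_0=Sh$ would produce via \eqref{commutateur [Lu,s]} an eigenvalue $\la_0-1<\la_0$, a contradiction. Matching with $\theta_n(t)=-\la_n^2t$ at each active index then yields the correct per-index relation $c=1+\frac{2}{n}\sum_{k=0}^{n-1}\la_k$, which involves \emph{all} eigenvalues below level $n$, not the quadratic quantity $\la_n^2-c\la_n$.

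This difference is decisive for the uniqueness claim. Your quadratic constraint, even if granted, only says $\la_{n_1}+\la_{n_2}=c$ for two active indices, which --- as you yourself concede --- permits exactly two solutions; the ``closer bookkeeping'' you defer is precisely the substantive content of the proof, not a refinement of it. With the correct relation, two active indices $n_1<n_2$ give $(n_2-n_1)\sum_{k=0}^{n_1-1}\la_k=n_1\sum_{k=n_1}^{n_2-1}\la_k$, which contradicts the strict monotonicity $\la_{n+1}\geq\la_n+1$ of \eqref{simplicite val prop Lu tilde} outright, with no residual second root to exclude. Likewise your derivation of the speed formula by ``matching the overall translation frequency $Nc$'' of the reconstructed $u(t,z)$ presupposes the same per-index phase formula you have not established; in the paper the formula $c=1+\frac{2}{N}\sum_{k=0}^{N-1}\la_k$ falls out of the telescoped recurrence at the single active index, with no appeal to the reconstruction formula at all (that enters only later, in deriving the explicit profiles). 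So the missing ideas are concrete: Lemma~\ref{I=vide}, the resulting chain recurrence for $\theta_n(t)$, and the $\theta_0$ anchor.
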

\vskip0.25cm
To this end, we shall need two key elements. Firstly, Lemma~\ref{evolution in the gnt} and identity~\eqref{evolution <u|gn>}. Secondly, we will utilize the existence of a relationship (identity~\eqref{tauctfn-gnt}) connecting the eigenfunctions $(\gnt)$ of $\Luttilde$ introduced in Definition~\ref{orthonormal Basis gnt}\,, with the functions $(\tauct \fnuzero)$, where recall $(\fnuzero)$ represents the eigenfunctions of $\Luzerotilde$. 
\vskip0.25cm
To establish this connection, we present the following proposition, which also describes the behavior of the eigenfunctions $(\fnuzero)$ of $\Luzerotilde$ under the action of the translation map on the spatial variable
$$
    \fnuzero\longmapsto \tau_{ct}\,\fnuzero\,, \qquad c,t\in\R\,.
$$


\begin{prop}\label{tauct fn}
    Let $u(t):=\tau_{ct}\,u_0\,$ be a solution to \eqref{CS-defocusing}\,.  
    There exists a sequence $(\theta_n(t))\subseteq \R\,,$ such that
    \be\label{tauctfn-gnt}
        \tau_{ct}\,\fnuzero=\eee^{i\theta_n(t)}\,\gnt \;, \qquad \qlq n\in\Nzero\,.
    \ee
    In other words, the $(\tauct\fnuzero)$ are also eigenfunctions of $\Luttilde\,.$
\end{prop}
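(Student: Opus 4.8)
The plan is to reduce the statement to the simplicity of the spectrum of $\Luttilde$. By Remark~\ref{Rq gnt vect. propre+ evolution<u|fn>} (identity~\eqref{gnt vect. propre}), the family $(\gnt)$ is an orthonormal family of eigenfunctions of $\Luttilde$ with $\Luttilde\,\gnt=\la_n\,\gnt$, and in the defocusing case the eigenvalues $(\la_n)$ are all simple by Proposition~\ref{multiplicite val propres} (inequality~\eqref{simplicite val prop Lu tilde}), so each eigenspace of $\Luttilde$ is one-dimensional. Hence it suffices to prove that $\tauct\fnuzero$ is itself an eigenfunction of $\Luttilde$ associated with $\la_n$. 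Once this is shown, $\tauct\fnuzero$ and $\gnt$ span the same line, i.e. $\tauct\fnuzero\parallelsum\gnt$; and since $\tauct$ is an isometry we have $\|\tauct\fnuzero\|_{L^2}=\|\fnuzero\|_{L^2}=1=\|\gnt\|_{L^2}$, so these two unit vectors differ only by a unimodular scalar $\eee^{i\theta_n(t)}$, which is exactly~\eqref{tauctfn-gnt}.

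The key step is therefore the covariance identity
\[
    \tauct\,\Luzerotilde\,\tauct^{-1}=\tilde{L}_{\tauct u_0}=\Luttilde\,,
\]
where the last equality uses the traveling-wave hypothesis $u(t)=\tauct u_0$. To obtain it, I would exploit that the spatial translation acts on Fourier modes as $\widehat{\tauct v}(k)=\eee^{-ikct}\fr v(k)$; consequently $\tauct$ commutes with the Szeg\H{o} projector $\Pi$ and with $D=-i\p_x$. The commutation with $\Pi$ yields the conjugation rule for Toeplitz operators: for any symbol $v$ and any $h\in\Ltwo$,
\[
    \tauct\,T_v\,\tauct^{-1}h=\tauct\,\Pi\big(v\,\tauct^{-1}h\big)=\Pi\big((\tauct v)\,h\big)=T_{\tauct v}h\,.
\]
Applying this with $v=u_0$ and $v=\overline{u_0}$, using $\tauct\,\overline{u_0}=\overline{\tauct u_0}$ together with $[\tauct,D]=0$, and recalling $\Luzerotilde=D+T_{u_0}T_{\overline{u_0}}$, I obtain $\tauct\,\Luzerotilde\,\tauct^{-1}=D+T_{\tauct u_0}T_{\overline{\tauct u_0}}=\Luttilde$.

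With the covariance identity in hand, the eigenfunction property is immediate: since $\Luzerotilde\fnuzero=\la_n\fnuzero$,
\[
    \Luttilde\big(\tauct\fnuzero\big)=\tauct\,\Luzerotilde\,\tauct^{-1}\big(\tauct\fnuzero\big)=\tauct\big(\la_n\fnuzero\big)=\la_n\,\tauct\fnuzero\,,
\]
so $\tauct\fnuzero$ lies in the eigenspace of $\Luttilde$ for $\la_n$, and the reduction of the first paragraph concludes. I expect the only genuinely delicate point to be the use of simplicity of the spectrum: it is precisely inequality~\eqref{simplicite val prop Lu tilde} that forces the eigenspaces to be one-dimensional and thereby pins down $\tauct\fnuzero$ up to a phase. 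In the focusing case, where the $(\nu_n)$ may have multiplicity two, this last step breaks down, which is consistent with the more involved analysis required there. A minor technical check worth stating is that the conjugation identity is first derived on a dense domain (say trigonometric polynomials or $H^2_+(\T)$) and then extended to $\mathrm{Dom}(\Luzerotilde)=H^1_+(\T)$ using the translation-invariance of these spaces.
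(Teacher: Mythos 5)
Your proposal is correct and follows essentially the same route as the paper: the paper's proof likewise computes that $\Luttilde\,(\tauct\fnuzero)=\la_n(u_0)\,\tauct\fnuzero$ (using that translation commutes with $D$ and $\Pi$, which you make explicit via the conjugation rule $\tauct\,T_v\,\tauct^{-1}=T_{\tauct v}$), then invokes the simplicity of the spectrum from Proposition~\ref{multiplicite val propres} to conclude collinearity with $\gnt$, and the isometry of $\tauct$ to reduce the scalar to a phase $\eee^{i\theta_n(t)}$. Your closing remark about the focusing case is also consistent with the paper, where Lemma~\ref{tauct fn foc} must assume simplicity of the relevant eigenvalue.
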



\begin{proof}
    By definition of $\tilde{L}_{u}=D+u\,\Pi\big(\bar{u}\, \cdot\big)\,,$ and since $u(t)=\tauct u_0$
    \begin{align*}
        \tilde{L}_{u(t)}\;\tauct\, \fnuzero
        =&\, D\fnuzero(x-ct)\,+\,u_0(x-ct)\,\Pi\big(\bar{ u}_0(x-ct)\; \fnuzero(x-ct)\big)\,, 
        \\
        =&\,\tauct\left(\tilde{L}_{ u_0} \fnuzero\right)
        \\
        =&\, \la_n(u_0)\,\tauct \fnuzero\,, \hskip4.25cm \qlq n\in\Nzero\,.
    \end{align*}
    In other words, $\tauct \fnuzero$ is an eigenfunction of $\tilde{L}_{u(t)}$ associated with the eigenvalue $\la_n(u_0)\,.$ On the other hand, recall that all the eigenvalues
    $\la_n(u_0)$ of $\tilde{L}_{u(t)}$ are simple, as stated in Proposition~\ref{multiplicite val propres}\,.
    Additionally, according to Remark~\ref{Rq gnt vect. propre+ evolution<u|fn>}, the $(\gnt)$ are eigenfunctions of $\tilde{L}_{u(t)}$ associated to the eigenvalues $\la_n(u_0)\,$. Therefore, for all $n\in\Nzero\,$, the two vectors $\tauct\fnuzero$ and $\gnt$ are collinear. Since both vectors belongs to an orthonormal basis of $\Ltwo$, then each one has an $L^2$-norm equal to one. Thus, we infer for all $n\in\Nzero\,,$ there exists $\thetant\in\R$ such that for all $t\in\R\,,$
    $$
        \tau_{ct}\fnuzero=\eee^{i\theta_n(t)}\gnt\,.
    $$
\end{proof}



\begin{corollary}\label{evolution theta[n]}
    For all $n\,,p\in\Nzero\,,$ and for all $t\in\R\,,$ we have
    \begin{enumerate}
        \item If $\langle 1\,|\fnuzero\rangle\neq 0$   then
            $$
                \theta_n(t)=-\la_n^2t\,.
            $$
        \item  If $\langle u_0\,|\fnuzero\rangle\neq 0$  then  
            $$
                \theta_n(t)=-\la_n^2t\,.
            $$
        \item If $\langle S\fpuzero|\fnuzero\rangle\neq0$ then
            \[
                \theta_n(t)=((\la_p+1)^2-\la_n^2)t-ct+\thetapt\,,
            \]
    \end{enumerate}  
    where $\theta_n(t)$ is the angle obtain in \eqref{tauctfn-gnt}\,.
\end{corollary}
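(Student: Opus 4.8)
The plan is to evaluate, for each of the three hypotheses, one well-chosen scalar quantity in two different ways and to compare. The common principle is that Proposition~\ref{tauct fn} lets us write $\tauct\fnuzero=\eee^{i\thetant}\gnt$, so any inner product built from $\tauct\fnuzero$ can be computed either by using that $\tauct$ is an isometry that commutes nicely with the translation-invariant objects $1$, $u(t)=\tauct u_0$ and the shift $S$ (which makes the quantity time-independent up to an explicit phase), or by substituting the eigenfunctions $\gnt$ and invoking the explicit evolution laws of Lemma~\ref{evolution in the gnt} and identity~\eqref{evolution <u|gn>}. Equating the two expressions and dividing by the nonvanishing scalar supplied by the hypothesis isolates $\eee^{i\thetant}$.

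Concretely, for case (1) I would test against the constant $1$. On one side $\ps{1}{\tauct\fnuzero}=\ps{1}{\fnuzero}$, since the constant function is invariant under $\tauct$ and $\tauct$ preserves the inner product; on the other side, by \eqref{tauctfn-gnt} and Lemma~\ref{evolution in the gnt}, $\ps{1}{\tauct\fnuzero}=\eee^{-i\thetant}\ps{1}{\gnt}=\eee^{-i\thetant}\ps{1}{\fnuzero}\eee^{-i\la_n^2 t}$. When $\ps{1}{\fnuzero}\neq0$ I cancel it and obtain $\eee^{i\thetant}=\eee^{-i\la_n^2 t}$. Case (2) is identical with $1$ replaced by $u(t)$: on one side $\ps{u(t)}{\tauct\fnuzero}=\ps{\tauct u_0}{\tauct\fnuzero}=\ps{u_0}{\fnuzero}$ by the isometry, and on the other side identity~\eqref{evolution <u|gn>} gives $\ps{u(t)}{\tauct\fnuzero}=\eee^{-i\thetant}\ps{u_0}{\fnuzero}\eee^{-i\la_n^2 t}$; cancelling $\ps{u_0}{\fnuzero}\neq0$ yields the same identity $\eee^{i\thetant}=\eee^{-i\la_n^2 t}$.

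Case (3) needs one extra ingredient: the commutation of the translation with the shift. A direct computation on the circle gives $\tauct S=\eee^{-ict}S\tauct$, hence $S\,\tauct\fpuzero=\eee^{ict}\tauct(S\fpuzero)$. Now test the quantity $\ps{S\,\tauct\fpuzero}{\tauct\fnuzero}$. Using this relation together with the isometry property, one side equals $\eee^{ict}\ps{S\fpuzero}{\fnuzero}$; substituting $\tauct\fpuzero=\eee^{i\thetapt}\gpt$ and $\tauct\fnuzero=\eee^{i\thetant}\gnt$ and applying the second line of Lemma~\ref{evolution in the gnt}, the other side equals $\eee^{i(\thetapt-\thetant)}\ps{S\fpuzero}{\fnuzero}\eee^{i((\la_p+1)^2-\la_n^2)t}$. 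Cancelling $\ps{S\fpuzero}{\fnuzero}\neq0$ gives $\eee^{ict}=\eee^{i(\thetapt-\thetant)}\eee^{i((\la_p+1)^2-\la_n^2)t}$, which rearranges to the stated expression for $\thetant$.

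The only delicate point, and the one I expect to be the main obstacle, is passing from these identities between unimodular exponentials to the claimed equalities of the real phases themselves. For this I would use that $\thetant$ may be chosen continuous in $t$ (the maps $t\mapsto\gnt$ and $t\mapsto\tauct\fnuzero$ are continuous and nonvanishing) with $\thetant\big|_{t=0}=0$, since $\tauct\big|_{t=0}=\Id$ and $\gnt\big|_{t=0}=\fnuzero$. A continuous real function that agrees modulo $2\pi$ with the continuous function $-\la_n^2 t$ (respectively $((\la_p+1)^2-\la_n^2)t-ct+\thetapt$) and vanishes at $t=0$ must coincide with it identically; this removes the ambiguity and delivers the three formulas exactly.
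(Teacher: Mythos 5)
Your proposal is correct and follows essentially the same route as the paper's proof: you combine \eqref{tauctfn-gnt} with identity~\eqref{evolution <u|gn>} and Lemma~\ref{evolution in the gnt}, using the unitarity of $\tauct$ together with the relation $S\,\tauct=\eee^{ict}\tauct S$ to reduce to the same system as \eqref{eqt theta n} and then cancel the nonvanishing inner products. Your final continuity/lifting step, fixing $\theta_n(0)=0$ to upgrade the identities between unimodular exponentials to equalities of the real phases themselves, is left implicit in the paper (which ends with ``leading to the result'') and is a sound way to close that small gap.
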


\begin{proof}
     By combining identity~\eqref{evolution <u|gn>} and the two identities of Lemma~\ref{evolution in the gnt}\,, with identity~\eqref{tauctfn-gnt} of  the previous proposition, we infer 
    \bes
        \left|
        \begin{array}{l}
            \ethetant\ps{1}{\tauct\fnuzero}=\ps{1}{\fnuzero}\,\eee^{-i\la_n^2t}\ ,
                    \\[0.2cm]
            \ethetant\ps{\tauct u_0}{\tauct\fnuzero}=\ps{u_0}{\fnuzero}\,\eee^{-i\la_n^2 \,t}
                    \\[0.2cm]
            \eee^{-i\thetapt}\eee^{i\thetant}\ps{S\tauct\fpuzero}{\tauct\fnuzero}=\ps{S\fpuzero }{\fnuzero}\,\eee^{i((\la_p+1)^2-\la_n^2)\,t}\,.
        \end{array}
        \right.
    \ees
    Note that, $S\,\tauct \;(\cdot)=\eee^{ict}\tauct(S\;\cdot)\,$, and since we are dealing with periodic functions\,, we deduce, 
     \be
    \label{eqt theta n}
        \left|
        \begin{array}{l}
            \ethetant\ps{1}{\fnuzero}=\ps{1}{\fnuzero}\,\eee^{-i\la_n^2t}\ ,
                    \\[0.2cm]
            \ethetant\eee^{i\varphi(t)}\ps{ u_0}{\fnuzero}=\ps{u_0}{\fnuzero}\,\eee^{-i\la_n^2 \,t}
                    \\[0.2cm]
            \eee^{-i\thetapt}\eee^{i\thetant}\eee^{ict}
            \ps{S\fpuzero}{\fnuzero}=\ps{S\fpuzero }{\fnuzero}\,\eee^{i((\la_p+1)^2-\la_n^2)\,t}\,.
        \end{array}
        \right.
    \ee
    leading to the result.
    
\end{proof}

At this point, we are ready to prove the spectral characterization of the traveling waves for \eqref{CS-defocusing}, namely Theorem~\ref{characterization u in the phase space <u|fn}\,.

\begin{proof}[Proof of Theorem~\ref{characterization u in the phase space <u|fn}]
    The proof relies on the spectral property of $\Lutilde$ discussed in Section~\ref{spectral property of the Lax operator} and on Corollary~\ref{evolution theta[n]}\,. Indeed, observe first
    by  Lemma~\ref{I=vide}\,, we have  $\ps{Sf_{n-1}^{\, u_0}}{\fnuzero}\neq0$ for all $n\in\N\,.$ Hence, applying the third identity of Corollary \ref{evolution theta[n]} with $p=n-1$\,, leads to the recurrence relation 
    $$
        \theta_n(t)=\big((\la_{n-1}+1)^2-\la_n^2\big)t-ct+\thetanMoinsUnt\,, \qquad  n\geq 1\,.
    $$
    Taking the sum of all these expressions from $n=1$ to $n\in\N$\,, we infer 
    \be\label{expression theta[n]}
        \theta_n(t)=\la_0^2\,t+2t\sum_{k=0}^{n-1}\,\la_k+nt-\la_n^2\,t-nct+\theta_0(t)\,.
    \ee
    Our aim is to prove that for all $n\geq 1$\,, $\ps{u_0}{f_{n}^{\,u_0}}=0$ unless at most for one $n$\,. For the sake of contradiction, suppose
     that there exist two integers $1\leq n_1<n_2$ such that $\ps{u_0}{f_{n_1}^{\,u_0}}\neq 0$ and $\ps{u_0}{f_{n_2}^{\,u_0}}\neq 0$\,. Then by Corollary~\ref{evolution theta[n]}, we infer 
     \begin{gather}\label{theta[n1], theta[n2]}
         \theta_{n_1}(t)=-\la_{n_1}^2t
         \\
         \theta_{n_2}(t)=-\la_{n_2}^2t
         \notag
     \end{gather}
     Plugging  \eqref{theta[n1], theta[n2]} in \eqref{expression theta[n]} we obtain
     \begin{gather}\label{n[1]ct=}
         n_1ct=n_1t+2t\sum_{k=0}^{n_1-1}\la_k+\theta_0(t)+\la_0^2t\,.
         \\
         n_2\,ct=n_2t+2t\sum_{k=0}^{n_2-1}\la_k+\theta_0(t)+\la_0^2t\,.\notag
     \end{gather}
     Besides, notice that 
     \be\label{theta[0]}
        \theta_0(t)=-\lambda_0^2t\,.
    \ee
    Indeed, if $\ps{u_0}{f_0^{\,u_0}}\neq 0$ then by the second point of Corollary~\ref{evolution theta[n]}, we have the claimed identity.  Otherwise, if $\ps{u_0}{f_0^{\,u_0}}=0$ then $\ps{1}{f_0^{\,u_0}}\neq0$\,, 
     since if it is not the case, 
     i.e. if there exists $h\in\Ltwo$ such that $f_0^{\, u_0}=Sh\,,$ then we have by the commutator relation~\eqref{commutateur [Lu,s]}
     $$
        \la_0 Sh=\tilde{L}_{u_0} Sh =S \tilde{L}_{u_0} h+Sh+\ps{Sh}{u_0}u_0\,,
     $$
     implying, as $\ps{Sh}{u_0}=\ps{f_0^{\,u_0}}{u_0}=0$,
     $$
        \tilde{L}_{u_0} h=(\la_0-1)h\,.
     $$
     That means, $h$ is an eigenvector of $\tilde{L}_{u_0} $ associated with an eigenvalue strictly less than $\la_0\,,$ which is impossible. 
     Therefore $\ps{1}{f_0^{\, u_0}}\neq0\,,$ and so by the first identity of Corollary~\ref{evolution theta[n]}, we infer $\theta_0(t)=-\la_0^2\,t\,.$ \ 
     Substituting \eqref{theta[0]} in \eqref{n[1]ct=}\,, we obtain
     \begin{gather*}
     \left|
        \begin{array}{l}\displaystyle
         c=1+\frac{2}{n_1}\,\sum_{k=0}^{n_1-1}\la_k\,.
         \\[0.5cm]
         \displaystyle c=1+\frac{2}{n_2}\,\sum_{k=0}^{n_2-1}\la_k\,.
     \end{array}
     \right.
     \end{gather*}
     That is,
     $$
        n_2\,\sum_{k=0}^{n_1-1}\la_k=n_1\,\sum_{k=0}^{n_2-1}\la_k\,,
     $$
     or
     \[
        (n_2-n_1) \,\sum_{k=0}^{n_1-1}\la_k\,=\,n_1\,\sum_{k=n_1}^{n_2-1}\la_k
     \]
     But recall by \eqref{simplicite val prop Lu tilde}\,, $\la_{n+1}> \la_n\,,$ for all $n\,.$ Combining this fact with the last equality, we  conclude
     $$
        n_1(n_2-n_1)\,\la_{n_1-1}\,>\, n_1(n_2-n_1)\,\la_{n_1}\,,
     $$
     leading to a contradiction. As a consequence, for any traveling wave solution $u(t,x):=u_0(x-ct)$ of \eqref{CS-defocusing}\,, there exists at most one $N\in\N$ such that 
     $$
        \ps{u_0}{f_{N}^{\,u_0}}\neq0 \,,
    $$
    where $(\fnuzero)$ is any orthonormal basis of $\Ltwo$ consisting of the eigenfunctions of $\Luzerotilde$. Moreover, $u$ 
    travels with the speed
    \be\label{speed c defocusing}
         c=1+\frac{2}{N}\,\sum_{k=0}^{N-1}\la_k\,.
    \ee
\end{proof}

\begin{Rq} \label{Rq speed defocusing}
In view of the previous Theorem and Corollary~\ref{<u|f[n]> <=> gamma[n]=0}\,,  it follows  that any traveling wave solution $u$ of \eqref{CS-defocusing} propagates with a speed
\be \label{c=n+la[0]}
    c=N+2\la_0\,.
\ee
Indeed, since  $\ps{u_0}{\fnuzero}=0$ for all $1\leq n<N\,,$ then by Corollary~\ref{<u|f[n]> <=> gamma[n]=0}\,, 
\bes
    \la_n=\la_{n-1}+1\,, \qquad \qlq 1\leq n < N\,.
\ees leading to the fact that  \eqref{speed c defocusing} is equivalent to \eqref{c=n+la[0]}\,.
Besides, since $\Lutilde$ is a non--negative operator, then $\la_0\geq 0$\,, which implies that the speed of the traveling wave solution satisfies $c\geq N\,.$ 
However, as will be observed in Subsection~\ref{L2 norm and speed defoc}, the speed $c=N$ can only  be reached by traveling waves of the form $u(t,x)=\eee^{iN (x-N t)}\,.$
\end{Rq}

\subsection{Explicit formulas of the traveling waves}
\label{Explicit formulas trav. Waves for (CS-)}

Recall by Remark~\ref{formule explicite en terme matricielle}\,, any elements of  the Hardy space, in particular $u_0\,,$ can be written as
\be\label{formule inversion spectrale u0}
      u_0(z)=\ps{(\Id-zM)^{-1}\,X}{Y}\,, 
\ee
where   $X\,,$ $Y $ are infinite column vectors, $M$ is an infinite matrix :
\be\label{X,Y,M}
    X:=\big(\ps{u_0}{\fnuzero}\big)\,,
    \quad  
    Y:=\big(\left\langle 1 \mid \fnuzero\right\rangle\big)\,,
    \quad M=\Big(\left\langle f_p^{\,u_0}\mid S \fnuzero\right\rangle\Big)\,.
\ee 
\vskip0.25cm
In the following, we denote by $\mathcal{G}_1$ the set of the \textit{semi--trivial traveling waves}, made up from the constant and the plane wave solutions
\be\label{G1}
    \G_1=\lracc{C\eee^{iN(x-Nt)}\,\mid\, C\in\C\,,\, N\in \Nzero}\,.
\ee
\begin{theorem*}[\ref{traveling waves for the defocusing CS}]
The traveling waves $u(t,x)=u_0(x-ct)$  of \eqref{CS-defocusing} are the potentials $u(t,x)\in \mathcal{G}_1$ and
    \[
       u(t,x):= \eee^{i\theta}\left(\alpha+
        \frac{\beta}{1-p\eee^{iN(x-c t)}}\right)\,,
                    \qquad
            p\in \D^*\,,\, \theta\in \T\,, \,
    \]
    where $N\in\N\,,$ $\,c:=-N\Big(1+\frac{2\alpha}{\beta}\Big),$ and $(\alpha,\beta)$ are two real constants satisfying
    \be\label{alpha, beta- cond. defo}
            \alpha\beta +\frac{\beta^2}{1-\va{p}^2}=-N\,.
    \ee
\end{theorem*}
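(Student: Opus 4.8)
The plan is to combine the spectral characterization of Theorem~\ref{characterization u in the phase space <u|fn} with the spectral inversion formula of Lemma~\ref{formule inversion spectrale} (Remark~\ref{formule explicite en terme matricielle}), which reconstructs $u_0$ from its coordinates in the eigenbasis $(\fnuzero)$ of $\Luzerotilde$. The first move is to locate the support of the vector $X=(\ps{u_0}{\fnuzero})_{n\ge0}$. Theorem~\ref{characterization u in the phase space <u|fn} gives at most one index $N\in\N$ with $\ps{u_0}{\fNuzero}\neq0$, and since $n=0$ is unconstrained (recall $\N=\{1,2,\dots\}$), $X$ is supported in $\{0,N\}$. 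I would then split into the cases $\operatorname{supp}X\in\{\varnothing,\{0\},\{N\},\{0,N\}\}$: the empty support gives $u_0=0$; a support $\{0\}$ forces, through the shift relations below, $Sf_{p}\parallelsum f_{p+1}$ for every $p$, hence $S^*f_0=0$, i.e.\ $f_0\parallelsum1$ and $u_0$ constant; a support $\{N\}$ similarly yields $f_0\parallelsum1$ and $u_0\parallelsum\eiNx$, a plane wave. All three belong to $\G_1$, so the genuinely new waves come from $\operatorname{supp}X=\{0,N\}$.

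Next I would read off the spectrum from Corollary~\ref{<u|f[n]> <=> gamma[n]=0}: since $\ps{u_0}{\fnuzero}=0\iff\la_n=\la_{n-1}+1$, every gap equals $1$ except the one at $n=N$, so $\la_n=\la_0+n$ for $0\le n\le N-1$, $\la_n=\la_N+(n-N)$ for $n\ge N$, and $\la_N>\la_{N-1}+1$ by \eqref{simplicite val prop Lu tilde}. Inserting $\la_k=\la_0+k$ into the speed formula $c=1+\tfrac2N\sum_{k=0}^{N-1}\la_k$ of Theorem~\ref{characterization u in the phase space <u|fn} already gives $c=N+2\la_0$, so it remains only to compute $\la_0$ explicitly. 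For the eigenfunctions, Proposition~\ref{gap=0 and eigenspaces} (defocusing case) turns each unit gap into a collinearity $Sf_{n-1}\parallelsum\fnuzero$; hence $\fnuzero=\omega_n\eee^{inx}f_0$ for $0\le n\le N-1$ and $f_{N+k}=\omega_k'\eee^{ikx}f_N$ for $k\ge0$, with unimodular $\omega_n,\omega_k'$. The single exceptional relation at $N$ is $Sf_{N-1}=\omega_{N-1}\eiNx f_0=c_0f_0+c_Nf_N$ with $c_0=\ps{Sf_{N-1}}{f_0}\neq0$ (this uses $\ps{u_0}{f_0}\neq0$ via the second identity of Lemma~\ref{relation between <Sf|f> <u|f> and <1|f>}), which expresses $f_N=(\gamma\eiNx+\delta)f_0$. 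Consequently
\[
 u_0=\ps{u_0}{f_0}\,f_0+\ps{u_0}{f_N}\,f_N=(A+B\eiNx)\,f_0
\]
for constants $A,B$, and the whole problem reduces to identifying $f_0$.

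This identification is the main obstacle. The cleanest route exploits that $\{\eee^{ijx}f_0\}_{0\le j\le N-1}\cup\{\eee^{ikx}f_N\}_{k\ge0}$ is orthonormal while $\eiNx f_0\in\operatorname{span}\{f_0,f_N\}$: expanding $\ps{\eee^{ijx}f_0}{f_0}$ and iterating $\eiNx f_0=qf_0+rf_N$ shows that $\widehat{|f_0|^2}$ is supported on $N\Z$ and that $\ps{\eee^{ikNx}f_0}{f_0}=q^{\,k}$ for a single $q$ with $0<|q|<1$. Thus $|f_0|^2=\tfrac{1-|p|^2}{|1-p\eiNx|^2}$, and since $f_0\in\Htwo$ is the zero-free outer function with this modulus, $f_0=\tfrac{\sqrt{1-|p|^2}}{1-p\eiNx}$ up to a phase, with $p=\bar q\in\D^*$; alternatively, one feeds the structured, essentially bidiagonal data $X,Y,M$ into the inversion formula of Remark~\ref{formule explicite en terme matricielle} and inverts $\Id-zM$ in closed form. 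Substituting $f_0$ into $u_0=(A+B\eiNx)f_0$ and absorbing a global phase $\eee^{i\theta}$ yields $u_0=\eee^{i\theta}\big(\alpha+\tfrac{\beta}{1-p\eiNx}\big)$ with $\alpha,\beta$ that can be taken real. Finally, substituting this form into the eigenvalue equation $\Luzerotilde f_0=\la_0f_0$ (equivalently, evaluating the first identity of Lemma~\ref{relation between <Sf|f> <u|f> and <1|f>} at $n=0$ and $n=N$) produces the constraint $\alpha\beta+\tfrac{\beta^2}{1-|p|^2}=-N$ and pins down $\la_0$, whence $c=N+2\la_0=-N\big(1+\tfrac{2\alpha}{\beta}\big)$. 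The conceptual difficulty is concentrated in passing from the abstract orthonormality and shift relations to the concrete rational profile of $f_0$; the remaining identifications of $\alpha,\beta,c$ are direct computations.
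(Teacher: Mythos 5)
Your first stage coincides with the paper's: Theorem~\ref{characterization u in the phase space <u|fn}, Corollary~\ref{<u|f[n]> <=> gamma[n]=0} and Proposition~\ref{gap=0 and eigenspaces} locate the support of $X$ in $\lracc{0,N}$, give the collinearities $f_n\parallelsum S^nf_0$ for $n\le N-1$ and $f_{N+k}\parallelsum S^kf_N$, and yield $c=N+2\la_0$; your dispatch of the degenerate supports (via $S^*f_0\perp S^nf_0$ for all $n$, hence $S^*f_0=0$) is a clean equivalent of the paper's $p=0$ branch. Where you genuinely diverge is the second stage. The paper reduces the inversion formula of Remark~\ref{formule explicite en terme matricielle} to an $(N+1)\times(N+1)$ linear-algebra computation, gets $u_0(z)=(az^N+b)/(1-pz^N)$ with $p=\ps{f_0}{S^Nf_0}$, and then substitutes $u_0(\eee^{-ict}z)$ into \eqref{CS-defocusing-z}, reading off \eqref{alpha, beta- cond. defo} and $c=-N\big(1+\tfrac{2\alpha}{\beta}\big)$ from the coefficients of $(1-p\eee^{-iNct}z^N)^{-n}$, $n=1,2,3$. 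You instead write $u_0=(A+B\eiNx)f_0$, identify $f_0$ through the Fourier support of $\va{f_0}^2$ (geometric coefficients $q^k$ on $N\Z$, then outerness), and extract the constraint and $\la_0$ from the eigenvalue equation $\Luzerotilde f_0=\la_0f_0$. I checked this endgame: it does produce $\bar\alpha\beta+\tfrac{\va{\beta}^2}{1-\va{p}^2}=-N$ and $\la_0=-N\big(1+\tfrac{\alpha}{\beta}\big)$, hence the correct speed via $c=N+2\la_0$. This function-theoretic route is a legitimate alternative to matrix inversion plus PDE substitution, and it delivers the speed more transparently.

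Two points need repair, however. First, ``$f_0$ is the zero-free outer function with this modulus'' is asserted, not proved: a priori an eigenfunction could carry a nonconstant inner factor $\Theta$. The fix is available from your own setup: since $f_N=(\gamma\eiNx+\delta)f_0$, every member of the family $\lracc{\eee^{ijx}f_0}_{0\le j\le N-1}\cup\lracc{\eee^{ikx}f_N}_{k\ge0}$, which is up to phases the full eigenbasis, would lie in the proper closed subspace $\Theta H^2_+$, contradicting completeness; so $\Theta$ is constant. Relatedly, your ``single $q$ with $0<\va{q}<1$'' silently excludes $q=0$: if $q=0$ then $Sf_{N-1}\parallelsum f_N$, and the second identity of Lemma~\ref{relation between <Sf|f> <u|f> and <1|f>} with $(p,n)=(N-1,0)$ forces $\ps{u_0}{f_0}=0$, contradicting your case assumption (this is exactly the degeneration the paper treats under $p=0$), so the exclusion should be stated. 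Second, your argument is one-directional: the constraint is derived from $\Luzerotilde f_0=\la_0f_0$, a necessary condition on a traveling wave, so you have only shown ``traveling wave $\Rightarrow$ this form with \eqref{alpha, beta- cond. defo}''. The theorem is a characterization, and in the paper the sufficiency falls out of the same substitution computation that produces the constraints; in your route you must still close with the (routine) direct substitution of the profile into \eqref{CS-defocusing-z}, which you never perform. Both repairs are short, so the proposal is essentially sound.
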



\begin{proof}
    The proof is based on the inversion spectral formula
    \[
        u_0(z)=\ps{(\Id-z M)^{-1}\,X}{Y}\,,
     \] of~\eqref{formule inversion spectrale u0}\,, and on the spectral characterization of $u_0$ described in Theorem~\ref{characterization u in the phase space <u|fn}.
     In the sequence, to make the notation less cluttered, we denote $f_n:=f_n^{\,u_0}\,.$
    \vskip0.25cm
    Let $u(t,x):=u_0(x-ct)\,$. As a first step, we prove that the infinite matrices $X\,,Y$ and $M$ reduce to finite matrices in the context of a traveling wave solution. 
    Indeed, by Theorem~\ref{characterization u in the phase space <u|fn}\,, there exists at most one $N\in\N\,,$ such that  $\ps{u_0}{f_N}\neq 0$\,.
    We focus on the case where such an $N$ exists, that is:
    \be\label{N exists}
        \begin{cases}
            \displaystyle\ps{u_0}{f_{N}}\neq0
            \\[0.12cm]
            \displaystyle\ps{u_0}{f_n}=0 \,,\; \qlq n\in\N\bk\lracc{N}
        \end{cases}\,.
    \ee
    The case where $\ps{u}{f_n}=0$ for all $n\in\N$ can be handled similarly, leading also to the reduction of the study to finite matrices. 
    From now on, we suppose \eqref{N exists} holds.
    Therefore, it follows by Lemma~\ref{relation between <Sf|f> <u|f> and <1|f>}\,, that $\la_n \ps{1}{f_n}=0\,,$ implying that
    $$
        \ps{1}{f_n}=0\,, \qquad \qlq n\in\N\bk\lracc{N}\,,
    $$
    as the eigenvalues $\la_n$ are all positive for any $n\in\N$ since $\Lutilde$ is a non--negative operator. 
    Therefore, the two infinite column vectors $X$ and $Y$ of \eqref{X,Y,M} reduces to 
    \be\label{X}
    X=\left(\begin{array}{c}
    \ps{u_0}{f_0}\\
    0\\
    \vdots\\
    0\\
    \ps{u_0}{f_N}\\
    0\\
    \vdots
    \end{array}\right)
\,,
\qquad
    Y=\left(\begin{array}{c}
    \ps{1}{f_0}\\
    0\\
    \vdots\\
    0\\
    \ps{1}{f_N}\\
    0\\
    \vdots
    \end{array}\right)\,.
\ee
\vskip0.2cm
On the other hand, since $\ps{u_0}{f_n}=0\,$, for all $n\in\N\bk\lracc{N}$\,, then by Corollary~\ref{<u|f[n]> <=> gamma[n]=0}\,, we have $\la_n=\la_{n-1}+1$ for all $n\in\N\bk\lracc{N}\,$. Whence, $Sf_{n-1}\,\parallelsum \,f_n$ for all $n\in\N\bk\lracc{N}\,,$ thanks to Proposition~\ref{gap=0 and eigenspaces}\,. More specifically, 
\be\label{fn parallele}
\begin{cases}
    f_n\,\parallelsum \,S^nf_0\,\hskip0.6cm, \qquad 1\leq n\leq N-1
    \\
    f_n\,\parallelsum\, S^{n-N}f_N\,, \qquad  n\geq N
\end{cases}\,.
\ee
As a consequence, the set $ \big\{(S^nf_0)_{n=0\,,\ldots\,, N-1}\,, (S^nf_N)_{n\geq 0}\big\}$ is an orthonormal basis of $\Ltwo$  and the  matrix $M=\big(\ps{f_p}{Sf_n}\big)$ reduces to 
$$
    M= 
    \left(\begin{array}{ccccc|cccc}
    0 & 1&0& \ldots &0 &0&\cdots&0
    \\
    \vdots& \ddots & \ddots&\ddots&\vdots&\vdots&&\vdots
    \\ 
    0&&\ddots&1&0&&\\
    \langle f_0\mid S^{N} f_0\rangle & 0 &\ldots&0 &\ps{f_N}{S^{N} f_{0}} &0&\\
    0&\ldots&&\ldots&0&1&0
    \\\hline
    0& \ldots&&\ldots&0&0&1&0\\
    \vdots&&&&\vdots&\vdots&\ddots&\ddots
    \end{array}\right).
$$
\vskip0.2cm
Hence, the infinite matrices $X\,,\, Y$ and $M$ in formula \eqref{formule inversion spectrale u0} can be restrained to  finite matrices involving only the first $N+1$ coordinates of $X\,$, $Y$, and $M$ \cite{GK21}\,.
 Indeed, denoting $\boldsymbol{\xi}:=\left(\Id-zM\right)^{-1}X$\,,  we have 
 $$
    \left(\Id-zM\right)\boldsymbol{\xi}=X\,.
$$
That is,
$$ 
    {\tiny
    \left( \begin{array}{ccccc|cccc}
    1 & -z&0& \ldots &0 &0&\cdots&0
    \\
    \vdots& \ddots & \ddots&\ddots&\vdots&\vdots&&\vdots
    \\ 
    0&&\ddots&-z&0&&\\
    -\langle f_0\mid S^{N} f_0\rangle z & 0 &\ldots&1 &-\ps{f_N}{S^{N} f_{0}}z &0&\\
    0&\ldots&&\ldots&1&-z&0
    \\\hline
    0& \ldots&&\ldots&0&1&-z&0\\
    \vdots&&&&\vdots&\vdots&\ddots&\ddots
    \end{array}
    \right)
    \cdot
    \left(
    \begin{array}{c}
    \xi_0\\
    \vdots\\
    \xi_{N-1}\\
    \xi_N\\
    \xi_{N+1}\\
    \vdots
    \end{array}\right)
    =
    \left(\begin{array}{c}
    \ps{u_0}{f_0}\\
    0\\
    \vdots\\
    0\\
    \ps{u_0}{f_N}\\
    0\\
    \vdots
    \end{array}\right)}
$$
Thus, for all $n\geq N+1\,,$ the $n^\text{th}$ coordinate of $\boldsymbol{\xi}$ is 
$\xi_n=z\,\xi_{n+1}\,$,  i.e. 
$$
    \xi_{N+1}=z^{n-N-1}\xi_n\,,\qquad\qlq n\geq N+1\,.
$$
And since $\sum_{n\geq0}|\xi_n|^2<\infty\,$, then 
$$
    \xi_n=0\,,\qquad \qlq n\geq N+1\,.
$$
As a result,
\be\label{Xn-zero}
    \ps{\left(\Id-z(M_{mn})_{m,n\geq N+1}\right)^{-1}(X_n)_{n\geq N+1}}{(Y_m)_{m\geq N+1}}=0\,,
\ee
and therefore
$$
    u_0(z)=\ps{\left(\Id-zM_{\leq N}\right)^{-1}X_{\leq N}}{Y_{\leq N}}_{\C^{N+1}\times\C^{N+1}}\,,
$$
where $M_{\leq N}:=(M_{mn})_{\,0\leq m,n\leq N}\,,$ $X_{\leq N}:=(X_n)_{\,0\leq n\leq N}$ and $Y_{\leq N}:=(Y_N)_{\,0\leq n\leq N}\,.$
Consequently, $u_0$ is a rational function 
$$
   u_0(z)=\frac{P(z)}{\det\left(\Id-zM_{\leq N}\right)}\;,\; 
$$
where $P(z)=Y_{\leq N}^*\cdot\,\mrm{Com}(\Id-z M_{\leq N})^T\,\cdot X_{\leq N}$.
Computing the numerator $P$ and the denominator of $u_0$ via these finite matrices, we obtain that $u_0$ is of the form
\be\label{u0}
    u_0(z)=\frac{az^N+b}{1-pz^N}\,,\qquad a,b\in \C\,,
\ee
where $p=\ps{f_0}{S^Nf_0}\,,$  $\va{p}<1\,.$ 
\vskip0.25cm
\noindent
\underline{If $p=0\,$}. Namely, if $\ps{f_0}{S^Nf_0}=0$\,, then
$$
    S^Nf_0=\sum_{n\geq n}\ps{S^Nf_0}{f_n}f_n=\ps{S^Nf_0}{f_N}f_N\,,
$$
since by  \eqref{fn parallele}\,, the set $ \big\{(S^nf_0)_{n=0\,,\ldots\,, N-1}\;,\, (S^nf_N)_{N\geq 0}\big\}$ is an orthonormal basis of $\Ltwo$\,. 
Thus, the two vectors $f_N$ and $S^Nf_0$ are collinear,
leading to : for all $n\in\Nzero\,,$
$$
    f_n\parallelsum S^nf_0\,, 
$$
thanks to \eqref{fn parallele}\,. Consequently, $\lracc{S^nf_0\,, n\in\Nzero}$ is an orthonormal basis of $\Ltwo$\,, which means,
the vector $f_0$ is necessarily collinear to $1\,.$ Besides, recall from \eqref{X}\,,
\begin{align*}
    u_0=&\,\ps{u_0}{f_0}f_0+\ps{u_0}{f_N}f_N
    \\
    =&\,\ps{u_0}{f_0}f_0+\ps{u_0}{S^Nf_0}S^N f_0\;,
    \\
    =&\, \ps{u_0}{1}+\ps{u_0}{\eiNx}\eiNx
\end{align*}
and, as $p=0$ i.e. $\ps{f_0}{S^Nf_0}=0\,,$ we have by the second identity of Lemma~\ref{relation between <Sf|f> <u|f> and <1|f>}\,, either
$$
\ps{u_0}{f_0}=0\qquad \text{or} \qquad \ps{u_0}{S^Nf_0}=0\,.
$$
i.e.
$$
\ps{u_0}{1}=0\qquad \text{or} \qquad \ps{u_0}{\eiNx}=0\,.
$$
Therefore, either $u_0(x)$ is a complex constant, or $u_0(x)=C \eiNx\,,$ with $C\in\C$\,, $N\in\N\,.$ 
Taking, $u(t,x)=u_0(x-ct)=C\eee^{iN(x-ct)}$, and substituting it in the defocusing Calogero--Sutherland DNLS equation \eqref{CS-defocusing}\,,
we infer, since the nonlinearity $D\Pi(\va{\eee^{iN(x-ct)}}^2)\eee^{iN(x-ct)}$ vanishes,
$$
    Nc \eee^{iN(x-ct)}-N^2 \eee^{iN(x-ct)}=0\,,
$$
and thus $c=N\,.$ As a result, if $p=0$ then the traveling waves $u(t,x):=u_0(x-ct)$ are 
$$
u(t,x)=C\eee^{iN(x-Nt)}\,, \qquad C\in\C\,,\, N\in\Nzero \,.
$$
\vskip0.25cm
Let us move, to the case where \underline{$p\neq 0\,.$}
The potential $u_0$ of \eqref{u0} can be rewritten as
$$
u_0=\alpha +\frac{\beta}{1-pz^n} \,, \qquad \alpha,\beta\in\C\,, \quad p,z\in\D\,,
$$
In order to find the relation between $\alpha,\beta$ and obtain the speed $c\,,$ 
we substitute $u(t,z):=u_0(\eee^{-ic t}z)$  into the defocusing Calogero--Sutherland DNLS equation \eqref{CS-defocusing}. This equation can be rewritten as
\be\label{CS-defocusing-z}
    i\p_tu-(z\p_z)^2u-2z\p_z\Pi(\va{u}^2)u=0\,,
\ee
after observing that  $D=-i\p_x$ can be expressed as $D\equiv z\p_z$.
Thus, starting from 
\[
    u:=u(t,z)=\alpha+\frac{\beta}{1-p\eee^{-iNct}z^N}\,,
\]
and computing $i\p_t u$ and $(z\p_z)^2u\,,$ we find

\[
    i\p_tu =-c\beta N \left(\frac{1}{1-p\eee^{-iNct}z^N}-\frac{1}{(1-p\eee^{-iNct}z^N)^2}\right)\,,
\]
\[    
    (z\p_z)^2u = \beta N^2\left(\frac{1}{1-p\eee^{-iNct}z^N}-\frac{3}{(1-p\eee^{-iNct}z^N)^2}+\frac{2}{(1-p\eee^{-iNct}z^N)^3}\right)\,.
\]
For the nonlinear part, 
\begin{align*}
    \va{u}^2=
    \va{\alpha}^2+\alpha\bar{\beta}
    +\frac{\alpha\bar{\beta}\,\bar{p}\eee^{iNct}}{z^N -\bar{p}\eee^{iNct}}
    +\frac{\alpha\bar{\beta}}{1-p\eee^{-iNct}z^N}
    +\frac{\va{\beta}^2 \,z^N}{(1-p\eee^{-iNct}z^N)(z^N- \bar{p}\eee^{iNct})}\,.
\end{align*}
Recall that $\Pi$ is an orthonormal projector into the Hardy space (in particular to a subspace of the holomorphic functions on $\D$). Thus, applying $\Pi$\,, it follows
\[
    \Pi(\va{u}^2)=\va{\alpha}^2+\alpha\bar{\beta}
     +\frac{\alpha\bar{\beta}}{1-p\eee^{-iNct}z^N}
     +\frac{\va{\beta}^2}{1-\va{p}^2}\frac{1}{1-p\eee^{-iNct}z^N}\,.
\]
    And hence,
\[
    z\p_z\Pi(\va{u}^2)\cdot u=A\left(\frac{-\alpha}{1-p\eee^{-iNct}z^N}+\frac{-\beta+\alpha}{(1-p\eee^{-iNct}z^N)^2}+\frac{\beta}{(1-p\eee^{-iNct}z^N)^3}\right)\,,
\]
where 
$$
    A=N\left(\bar{\alpha}\beta+\frac{\va{\beta}^2}{1-\va{p}^2}\right)\,.
$$
Substituting the expressions of $i\p_t u\,,\,$ $(z\p_z)^2u\,$ and $\,z\p_z\Pi(\va{u}^2)u$ into \eqref{CS-defocusing-z}\,, and comparing the terms $\displaystyle\frac{1}{(1-p\eee^{-iNct}z^N)^n}$ for $n=1,2,3$\,, we deduce
\begin{itemize}
    \item With $n=3\,,$ $A=-N^2\,.$ That is,
    \bes
        \bar{\alpha}\beta+\frac{\va{\beta}^2}{1-\va{p}^2}=-N
    \ees
    \item With $n=2$ and $n=1\,,$
    \[
    c=-N\left(1+\frac{2\alpha}{\beta}\right)\,.
    \]
\end{itemize}
As a result, for $p\neq 0\,,$
\[
     u(t,z):= \alpha+
        \frac{\beta}{1-p\eee^{-iNc t}z^N}\,,
                    \qquad
            p\in \D^*\,,\, \theta\in \T\,, \,
\]
where $N\in\N\,,$ $\,c:=-N\Big(1+\frac{2\alpha}{\beta}\Big),$ and $(\alpha,\beta)\in\C\times\C$  satisfy
    \be\label{cond alpha, beta complex def}
            \alpha\beta +\frac{\va{\beta}^2}{1-\va{p}^2}=-N\,.
    \ee
 Finally, observe by \eqref{cond alpha, beta complex def}\,, the two complex constants $(\alpha,\beta)$ satisfy  $\bar{\alpha}\beta\in \R\,.$ Thus,  by making a slight abuse of notation on $\alpha$ and $\beta$, we 
    have obtained that the traveling waves of \eqref{CS-defocusing} with $p\neq 0$ are given by  \[
       u(t,z):= \eee^{i\theta}\left(\alpha+
        \frac{\beta}{1-p\eee^{-iNc t}z^N}\right)\,,
                    \qquad
            p\in \D^*\,,\, \theta\in \T\,, \,
    \]
    where $N\in\N\,,$ $\,c:=-N\Big(1+\frac{2\alpha}{\beta}\Big),$ and $(\alpha,\beta)\in\R\times\R$ satisfy
    \[
            \alpha\beta +\frac{\beta^2}{1-\va{p}^2}=-N\,.
    \]

\end{proof}


\subsection{The \texorpdfstring{$L^2$}{L2}--norm and the speed} \label{L2 norm and speed defoc}

In this subsection, we analyze  how the traveling waves of \eqref{CS-defocusing} behaves, by providing information regarding their $L^2$--norm and their speed $c$. 
Recall that the set of  traveling wave solutions of the defocusing Calogero--Sutherland DNLS equation are made up by the trivial solutions
\[
 \G_1=\lracc{C\eee^{iN(x-Nt)}\,\mid\, C\in\C\,,\, N\in \Nzero}\,,
\]
and by the set of functions
    \be\label{non semi trivial wave for CS-}
       u(t,x):= \eee^{i\theta}\left(\alpha+
        \frac{\beta}{1-p\eee^{iN(x-c t)}}\right)\,,
                    \qquad
            p\in \D^*\,,\, \theta\in \T\,, \,
    \ee
    where $N\in\N\,,$ $\,c:=-N\Big(1+\frac{2\alpha}{\beta}\Big),$ and $(\alpha,\beta)$ are two real constants
    satisfying \eqref{alpha, beta- cond. defo}\,.
    \vskip0.25cm
For $u\in \G_1\,,$ it is easy to see that the $L^2$--norm of the semi--trivial solution can be arbitrarily small or large in $[0,+\infty)\,,$ and its speed  $c$ is given as $c=
N\in\Nzero\,.$ 
   The following proposition aims to provide those for the nontrivial traveling waves of \eqref{CS-defocusing}\,. 
   
\begin{prop}[$L^2$ norm of a non--trivial traveling wave and the speed]\label{norme L2 +c defo}
\hfill
\begin{enumerate}[label=(\roman*),itemsep=2pt]
    \item For any $r>0\,,$ there exists  a non--trivial traveling waves $ u(t,x):=u_0(x-ct)$ for \eqref{CS-defocusing} with 
    \[
        \|u_0\|_{L^2}=r\,.
    \]
    In other words, the traveling waves of \eqref{CS-defocusing} can be arbitrarily small or large in $\Ltwo\,.$
    \item Let $u$ be a traveling wave for \eqref{CS-defocusing} of the form \eqref{non semi trivial wave for CS-}\,, then $u$ propagates to the right with a speed $c>N\,.$ 
    In addition, when $\|u\|_{L^2}\to \infty$ 
    then $c\to\infty$
    and when $\|u\|_{L^2}\to 0$ then $c\to N\,.$ 
\end{enumerate}
\end{prop}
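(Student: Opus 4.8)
The plan is to attach to each non-trivial wave \eqref{non semi trivial wave for CS-} two scalar invariants — its $L^2$--norm and its speed $c$ — to compute both explicitly, and then to read off the asymptotics from the relation between them. First I would compute the norm. Discarding the phase $\eee^{i\theta}$ and expanding as a geometric series,
\[
    u_0(x)=(\alpha+\beta)+\beta\sum_{k\geq1}p^{k}\eee^{iNkx}\,,
\]
so that by Parseval $\|u_0\|_{L^2}^2=(\alpha+\beta)^2+\beta^2\,\tfrac{\va{p}^2}{1-\va{p}^2}$. Inserting the defining relation \eqref{alpha, beta- cond. defo} written as $\tfrac{\beta^2}{1-\va{p}^2}=-N-\alpha\beta$ collapses this to the compact identity
\[
    \|u_0\|_{L^2}^2=\alpha^2+\alpha\beta-N\,,
\]
which is the engine of the whole proposition.

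Next I would express the speed through the ratio $s:=\alpha/\beta$. By \eqref{alpha, beta- cond. defo}, $s=-\tfrac{N}{\beta^2}-\tfrac{1}{1-\va{p}^2}<-\tfrac{1}{1-\va{p}^2}<-1$, so $c=-N(1+2s)>N$; this is the first assertion of (ii), and it matches $c=N+2\la_0$ from Remark~\ref{Rq speed defocusing}, the strict inequality $\la_0>0$ being the eigenvalue translation of $c>N$. Substituting $\alpha=-\tfrac{c+N}{2N}\beta$ into the norm identity gives
\[
    \|u_0\|_{L^2}^2=\frac{\beta^2(c^2-N^2)}{4N^2}-N\,,
\]
and since $\va{p}^2\in(0,1)$ forces $\beta^2>\tfrac{2N^2}{c-N}$, I obtain the two-sided bound
\[
    N<c<N+2\|u_0\|_{L^2}^2\,.
\]

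For (i) I would use these same formulas to realize every $r>0$ by a non-trivial wave. Fix $N$ and a speed $c>N$; then \eqref{alpha, beta- cond. defo} determines $\beta^2$ as a continuous, strictly increasing function of $\va{p}^2$ on its admissible range, and by the displayed norm identity $\|u_0\|_{L^2}^2$ sweeps continuously and monotonically over the interval $\big(\tfrac{c-N}{2},\infty\big)$. Letting $c$ range over $(N,\infty)$, the left endpoints $\tfrac{c-N}{2}$ decrease to $0$, so the union of these intervals is all of $(0,\infty)$; an intermediate-value argument then produces, for any prescribed $r$, admissible parameters $(\alpha,\beta,p)$ with $\|u_0\|_{L^2}=r$.

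It remains to extract the two limits in (ii). The squeeze $N<c<N+2\|u_0\|_{L^2}^2$ gives at once $c\to N$ as $\|u_0\|_{L^2}\to0$. The reverse regime $\|u_0\|_{L^2}\to\infty\Rightarrow c\to\infty$ is the delicate point and, I expect, the main obstacle: the two-sided bound pins $c$ from above by the norm but not from below, and through $c=N+2\la_0$ the speed is controlled solely by the ground-state eigenvalue $\la_0$ of $\Luzerotilde$, whereas the norm carries the additional parameter $\va{p}$. I would attack it by analysing the joint blow-up of $(\|u_0\|_{L^2},c)$ as $(\beta,\va{p})$ approach the boundary of the admissible set cut out by \eqref{alpha, beta- cond. defo} and $\va{p}\in(0,1)$, extracting from the norm identity the dominant balance that forces $\la_0$ — and hence $c$ — to grow with the norm. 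Securing this eigenvalue-versus-norm comparison is the crux of the argument.
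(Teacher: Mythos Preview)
Your computations are correct and closely parallel the paper's, though with some pleasant variations: you compute the norm by Parseval rather than by the paper's contour integral, you organise the speed analysis through the ratio $s=\alpha/\beta$, and your two-sided estimate $N<c<N+2\|u_0\|_{L^2}^2$ is a clean explicit bound that the paper does not isolate. Your treatment of (i) via the intermediate value theorem, sweeping $\|u_0\|_{L^2}^2$ over $\bigl(\tfrac{c-N}{2},\infty\bigr)$ for each fixed $c>N$, is a tidy alternative to the paper's direct limiting argument in the $(\beta,\va{p})$ variables; both work.

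Your caution on the final implication is well placed --- more so than you may realise. The paper's proof of ``$\|u\|_{L^2}\to\infty\Rightarrow c\to\infty$'' argues only that the norm blows up when $\beta^2\to0$ or $\va{p}^2\to1$, and that in those two regimes $c\to\infty$. But this is not exhaustive: your own identity $\|u_0\|_{L^2}^2=\tfrac{\beta^2(c^2-N^2)}{4N^2}-N$ shows at once that one may \emph{fix} any $c_0>N$, send $\beta^2\to\infty$ (which by the constraint forces $\va{p}^2\to\tfrac{c_0-N}{c_0+N}\in(0,1)$), and obtain a sequence of admissible non-trivial waves with $\|u_0\|_{L^2}\to\infty$ while $c\equiv c_0$ stays bounded. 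So the implication, read literally as ``every blow-up of the norm forces blow-up of the speed'', is false, and the paper's argument misses exactly this third regime. What \emph{is} true --- and what your upper bound $c<N+2\|u_0\|_{L^2}^2$ encodes --- is the converse direction: large speed forces large norm. The obstacle you anticipated is therefore not a gap in your reasoning but a genuine defect in the statement; no eigenvalue-versus-norm comparison will rescue it.
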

\begin{Rq}[\textit{Non-existence of stationary solution for \eqref{CS-defocusing}}]
    Since for any traveling wave $u_0(x-ct)$ of the defocusing Calogero--Sutherland DNLS equation \eqref{CS-defocusing} we have $c\geq N\,,$ where $N$ is the numerator's degree of $u_0\,,$ then there is no stationary solution (i.e. $u(t,x)=u_0(x)$) for the \eqref{CS-defocusing}--equation\,.
    Another way to see this, is by observing that if $c=0\,$, which occurs when $\alpha=-\frac{\beta}{2}$ according to Theorem \ref{traveling waves for the defocusing CS}, then we have by~\eqref{alpha, beta- cond. defo} 
    $$
        \frac{1+\va{p}^2}{1-\va{p}^2}\,\beta^2=-N\,,
    $$ which is impossible as $p\in\D^*$.
\end{Rq}

\begin{proof}
    (i) \textit{The $L^2$--norm of the non--trivial traveling wave can be arbitrarily small or large.} Let $u$ be a traveling wave of the form \eqref{non semi trivial wave for CS-} 
    \[
        u(t,x):= \eee^{i\theta}\left(\alpha+
        \frac{\beta}{1-p\eee^{iN(x-c t)}}\right)\,, \qquad p\in\D^*\,,\, N\in\N\,,
    \]
    where $(\alpha,\beta)\in\R^2$ satisfies 
    the identity~\eqref{alpha, beta- cond. defo}\,. Recall that any function $u$ in the Hardy space can be seen as an analytic function on the open unit disc
    $\D\,,$ whose trace on the boundary $\p \D$ is in $L^2\,.$ Hence,
    \[
        \|u\|_{L^2}^2
        =\,\intc \va{u(z)}^2\, \dz\,,
    \]
    where
    \begin{align*}
        \va{u(z)}^2=&\,\left(\alpha+
        \frac{\beta}{1-p\eee^{-iNc t}z^N}\right)
        \left(\alpha+
        \frac{\beta\, z^N}{z^N-\bar{p}\eee^{iNc t}}\right) 
        \\
        =&\; \alpha^2+\alpha\beta+\frac{\alpha\beta \eee^{iNc t} \bar{p}}{z^N-\bar{p}\eee^{iNc t}}+\frac{\alpha\beta}{1-p\eee^{-iNc t}z^N}
        +\frac{\beta^2\, z^N}{(1-p\eee^{-iNc t}z^N)(z^N-\bar{p}\eee^{iNc t})}
    \end{align*}
    Writing 
    \[
        \frac{\beta^2\, z^N}{(1-p\eee^{-iNc t}z^N)(z^N-\bar{p}\eee^{iNc t})}=
        \frac{\beta^2}{1-\va{p}^2}\left(\frac{1}{1-p\eee^{-iNc t}z^N}+\frac{\bar{p}\eee^{iNc t}}{z^N-\bar{p}\eee^{iNc t}}\right)\,,
    \]
we infer
\[
     \va{u(z)}^2= \alpha^2+\alpha\beta
     +\left(\alpha\beta +\frac{\beta^2}{1-\va{p}^2}\right)\frac{1}{1-p\eee^{-iNc t}z^N}
     +\left(\alpha\beta +\frac{\beta^2}{1-\va{p}^2}\right)\frac{ \eee^{iNc t} \bar{p}}{z^N-\bar{p}\eee^{iNc t}}\,.
\]
Therefore, 
\begin{align}\label{u L2}
    \|u\|_{L^2}^2=\alpha^2+\alpha\beta
     +\alpha\beta +\frac{\beta^2}{1-\va{p}^2}  \;,
\end{align}
since for $N\in\N\,,$
$$
    \left\langle 1\;\Big|\;\frac{1}{z^N-\bar{p}\eee^{iNc t}}\right\rangle=\intc \frac{z^N}{1-p\eee^{-iNc t}z^N}\dz=0\,.
$$
Consequently, 
by \eqref{alpha, beta- cond. defo}\,, \footnote{As we shall see in Corollary~\ref{L2 norm of a finite gap} of Section~\ref{finite gap potentials}, this corresponds to $\|u\|_{L^2}^2=\la_N-N$ where $\la_N>N+\la_0>N\,.$}
\be\label{u L2 =a[2]+ab-N}
    \|u\|_{L^2}^2=\alpha^2+\alpha\beta-N\,.
\ee
In addition,
since by \eqref{alpha, beta- cond. defo}
\[
    \alpha=-\frac{N}{\beta}-\frac{\beta}{1-\va{p}^2}\,,
\]
then,
\begin{align*}
    \|u\|_{L^2}^2
    =&\,\left(-\frac{N}{\beta}-\frac{\beta}{1-\va{p}^2}\right)^2+\left(-\frac{N}{\beta}-\frac{\beta}{1-\va{p}^2}\right)\beta-N
    \\
    =&\,\frac{\va{p}^2}{1-\va{p}^2}\left(\frac{\beta^2}{1-\va{p}^2}+2N\right)+\frac{N^2}{\beta^2}\,.\notag
\end{align*}
Observe that, $\|u\|_{L^2}^2$ is a continuous function of $\va{p}^2$ and $\beta^2$\,. Moreover,  by taking $\beta\to0$ then  $\|u\|_{L^2}^2\to \infty\,.$ And if we take  $\va{p}^2\to0$ then 
$$
    \|u\|_{L^2}^2\underset{\va{p}^2\to0}{\sim}\; \frac{N^2}{\beta^2}
$$
which can be arbitrary small when $\beta>\!>1$.

\vskip0.3cm
\noindent
(ii) 
\textbf{Speed : $\mathbf{c>\textit{N}}\,$.}  By Theorem~\ref{traveling waves for the defocusing CS}\,, the speed of the traveling waves of the form \eqref{non semi trivial wave for CS-} is given by
        $c=-N(1+\frac{2\alpha}{\beta})\,.$ Besides, recall from \eqref{alpha, beta- cond. defo}\,, 
        $$
            \frac{\alpha}{\beta}=-\frac{N}{\beta^2}-\frac{1}{1-\va{p}^2}\,.
        $$ 
         Substituting the latter identity in the expression of $c$\,, it follows
         \be\label{speed defocusing trav waves}
            c\,=\,N\left(\frac{1+\va{p}^2}{1-\va{p}^2}+\frac{2N}{\beta^2}\right)\,>\,N\,.
         \ee
         \vskip0.25cm
         \noindent
    It remains to prove that 
    \begin{itemize}
        \item when $\|u\|_{L^2}\to +\infty$\,, we have $c\to+\infty\,,$ 
        \item and when $\|u\|_{L^2}\to 0$ then $c\to N\,.$  
    \end{itemize}
    Indeed, observe that $\|u\|_{L^2}^2\to \infty$ when  $\beta^2\to 0$ or $\va{p}^2\to1\,,$
    and in both cases $$c\to \infty\,.$$ On the other side, $\|u\|_{L^2}^2$ is arbitrary small when $\va{p}^2\to 0$ and $\beta$ is big enough.
    Hence, by passing to the limit $\va{p}^2\to 0$ in \eqref{speed defocusing trav waves}\,, we infer 
    \[
        c \underset{\va{p}^2\to0}{\sim} N\left(1+\frac{2N}{\beta^2}\right)\,,
    \]
    which can arbitrary close to $N$ as $\|u\|^2_{L^2}$ is arbitrary close to $0$\,.
    
\end{proof}


\section{\textbf{Traveling waves for the focusing~(CS\texorpdfstring{$^+$}{+})}}
\label{Traveling waves for (CS+)}
\subsection{Toward the characterization of the traveling waves for (CS\texorpdfstring{$^+$}{+})}
\label{characterization of traveling waves for (CS+)}
Recall that 
to characterize the traveling waves of the defocusing equation \eqref{CS-defocusing}, a spectral analysis was initially conducted, followed by the derivation of explicit formulas. Here, we aim  to footstep the same strategy. 
But before proceeding, 
 we shall require some analogous lemmas to the defocusing case.

\begin{lemma}[The analog of Lemma~\ref{evolution in the gnt}]
\label{evolution in the gnt-foc}
    Let $u\in \mathcal C_t\Htwo_x$ solution of \eqref{CS-focusing}. Then, for all $n\,,p\in\Nzero\,,$
    \begin{align}\label{evolution <1|fn>-foc}
        \ps{1}{\gnt}=&\,\ps{1}{\fnuzero}\,\eee^{-i\nu_n^2t}\,,\notag
        \\
        \ps{u}{\gnt}=&\,\ps{u_0}{\fnuzero}\eee^{-i\nu_n^2t} 
        \\
        \ps{S\gpt}{u}=&\,\ps{Sf_p^{\, u_0}}{u_0}\eee^{i(\nu_p+1)^2\,t}\,,\notag 
        \\
        \ps{S\gpt}{\gnt}=&\,\ps{S\fpuzero }{\fnuzero}\,\eee^{i((\nu_p+1)^2-\nu_n^2)\,t}\,,
        \notag
    \end{align}
    where the 
    $(\gnt)$ denotes the orthonormal basis of $\Ltwo$ solution to the Cauchy problem
\[
    \begin{cases}
			\p_{t} \,\gnt&={B}_{u(t)}\,\gnt 
			\\
			{\gnt\,}_{|_{t=0}}&=\fnuzero
		\end{cases}\,,\qquad \qlq n\in\Nzero\,,
\]
and $(\fnuzero)$ is an orthonormal basis of $\Ltwo$ made up of the eigenfunctions of  $L_{u_0}$  and ${B}_{u(t)}$ is the skew--adjoint operator defined in \eqref{Lax operators defoc case}\,.
\end{lemma}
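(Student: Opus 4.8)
The plan is to mirror, essentially line by line, the arguments that establish Remark~\ref{Rq gnt vect. propre+ evolution<u|fn>} and Lemma~\ref{evolution in the gnt} in the defocusing case, replacing $(\tilde{L}_u,\tilde{B}_u)$ by the focusing pair $(\Lu,\Bu)$ of \eqref{Lax operators foc case} and using the focusing analog of the rewriting \eqref{rewriting CS-def}, namely $\p_t u=\Bu u-i\Lu^2 u$. Two structural facts drive everything: $\Bu$ is skew--adjoint and $\Lu$ self--adjoint on $\Ltwo$; and, because the $(\gnt)$ solve $\p_t\gnt=\But\gnt$ with $\gnt|_{t=0}=\fnuzero$, the isospectral theory (the focusing counterpart of Remark~\ref{Rq gnt vect. propre+ evolution<u|fn>}, via \cite[Lemma 4.1]{Ku06} and the focusing Lax equation) guarantees $\Lut\gnt=\nu_n\gnt$ for every $t$. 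With these in hand, each of the four scalar quantities will satisfy a linear ODE $\p_t(\cdot)=i\omega\,(\cdot)$ that integrates to the claimed exponential.

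For the second identity I would pair $\p_t u=\Bu u-i\Lu^2 u$ with $\gnt$: writing $\p_t\ps{u}{\gnt}=\ps{\Bu u-i\Lu^2 u}{\gnt}+\ps{u}{\Bu\gnt}$ and using skew--adjointness of $\Bu$ to cancel the two $\Bu u$ contributions, one is left with $\p_t\ps{u}{\gnt}=-i\ps{u}{\Lu^2\gnt}=-i\nu_n^2\ps{u}{\gnt}$. For the first identity I would instead compute $\Bu 1$ explicitly: using $\Lu 1=-\ps{1}{u}u$ (as in the proof of Lemma~\ref{relation between <Sf|f> <u|f> and <1|f>}), the facts $T_{\partial_x\bar u}1=0$ and $\partial_x u\in\Ltwo$, a short computation from \eqref{Lax operators foc case} gives $\Bu 1=\ps{1}{u}\big(-\partial_x u+i T_u T_{\bar u}u\big)=i\Lu^2 1$, the focusing analog of $\tilde{B}_u 1=i\tilde{L}_u^2 1$. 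Then $\p_t\ps{1}{\gnt}=-\ps{\Bu 1}{\gnt}=-i\ps{\Lu^2 1}{\gnt}=-i\nu_n^2\ps{1}{\gnt}$. Both ODEs integrate to the stated formulas with frequency $-\nu_n^2$.

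For the fourth identity I would differentiate $\ps{\gnt}{S\gpt}$ and, exactly as in Lemma~\ref{evolution in the gnt}, reduce it to $\p_t\ps{\gnt}{S\gpt}=\ps{[S^*,\Bu]\gnt}{\gpt}$; inserting \eqref{commutateur [Bu,s]} and using that $\Lu^2$ and $(\Lu+\Id)^2$ are self--adjoint with $\gnt,\gpt$ eigenfunctions yields $\p_t\ps{\gnt}{S\gpt}=i\big(\nu_n^2-(\nu_p+1)^2\big)\ps{\gnt}{S\gpt}$, and conjugating produces the stated exponent $i\big((\nu_p+1)^2-\nu_n^2\big)$. The genuinely new statement is the third identity, which pairs $S\gpt$ with $u$ rather than with an eigenfunction. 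Here I would differentiate $\ps{S\gpt}{u}$ directly: the two $\Bu$--terms combine into $\ps{[S,\Bu]\gpt}{u}$, and $[S,\Bu]=[S^*,\Bu]^*$ computed from \eqref{commutateur [Bu,s]} equals $-i\Lu^2 S+iS(\Lu+\Id)^2$; applied to the eigenfunction $\gpt$ the $\Lu^2 S$ piece cancels against the $-i\Lu^2$ term inherited from $\p_t u$, leaving $\p_t\ps{S\gpt}{u}=i(\nu_p+1)^2\ps{S\gpt}{u}$. Alternatively one can avoid the adjoint commutator by expanding $u=\sum_n\ps{u}{\gnt}\gnt$ and combining the second and fourth identities, which reproduces the same result.

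The computations are all routine once this framework is fixed; I expect the only real friction to be sign bookkeeping, since the focusing operators in \eqref{Lax operators foc case} carry the opposite sign on $T_u T_{\bar u}$ relative to the defocusing ones in \eqref{Lax operators defoc case}. The two places requiring genuine care are the verification of $\Bu 1=i\Lu^2 1$ and the cancellation in the third identity, each of which must be carried out attentively to land the correct $\nu_n^2$ versus $(\nu_p+1)^2$ frequencies. No new idea beyond the defocusing argument is needed.
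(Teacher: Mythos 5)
Your proposal is correct and follows essentially the same route as the paper, whose proof consists precisely of the remark that, thanks to the focusing rewriting $\p_t u=B_u u-iL_u^2u$, one repeats verbatim the defocusing argument of Lemma~\ref{evolution in the gnt} (together with Remark~\ref{Rq gnt vect. propre+ evolution<u|fn>}) with $(\tilde L_u,\tilde B_u)$ replaced by $(L_u,B_u)$. Your explicit verifications — $B_u1=\ps{1}{u}(-\p_xu+iT_uT_{\bar u}u)=iL_u^2 1$, the cancellation via $[S,B_u]=-iL_u^2S+iS(L_u+\Id)^2$ yielding the frequency $(\nu_p+1)^2$ for $\ps{S\gpt}{u}$, and the conjugation giving $i((\nu_p+1)^2-\nu_n^2)$ — are exactly the details the paper leaves implicit, and they are all accurate.
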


\begin{proof}
    Since  the focusing Calogero--Sutherland DNLS equation can also be rewritten in terms of its Lax operators \cite[Lemma~2.4]{Ba23}
    $$
		\p_t u=B_u u -iL^2_uu\,,
  $$
  then one can repeat exactly  the same proof of Lemma~\ref{evolution in the gnt} and obtain the same results.
\end{proof}

\begin{lemma}[The analog of Proposition~\ref{tauct fn}]\label{tauct fn foc}
Let $u:=\tau_{ct}u_0$ be a traveling wave of \eqref{CS-focusing}\, such that the eigenvalue $\nu_n(u_0)$ is simple. Then,  there exists $\theta_n(t)\in\R$ such that 
    \be\label{tauctfn-gnt-foc}
        \tau_{ct}\fnuzero=\eee^{i\theta_n(t)}\gnt \;,
    \ee
    where the $(\gnt)$ denotes the orthonormal basis defined in the previous lemma.
\end{lemma}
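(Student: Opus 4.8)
The plan is to follow the proof of Proposition~\ref{tauct fn} line by line, substituting the focusing Lax operator $\Lut$ for $\Luttilde$, and to use the simplicity hypothesis exactly where the defocusing proof relied on the automatic simplicity of the spectrum of $\Luttilde$.

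First I would show that $\tauct\fnuzero$ is an eigenfunction of $\Lut$ for the eigenvalue $\nu_n(u_0)$. Since $u(t)=\tauct u_0$ and the spatial translation $\tauct$ commutes with both $D=-i\p_x$ and the Szeg\H{o} projector $\Pi$ (translation preserves the Fourier support), it intertwines the Toeplitz operators, $\tauct T_{u_0}T_{\overline{u_0}}\tauct^{-1}=T_{u(t)}T_{\overline{u(t)}}$, hence $\Lut\,\tauct=\tauct\,\Luzero$. Therefore
\[
    \Lut\,\tauct\fnuzero=\tauct\big(\Luzero\fnuzero\big)=\nu_n(u_0)\,\tauct\fnuzero\,.
\]
Next I recall, from the construction of the $(\gnt)$ in Lemma~\ref{evolution in the gnt-foc}, namely $\gnt=U(t)\fnuzero$ with $U(t)$ the flow of the skew-adjoint $\But$, together with the isospectral identity $\Luzero=U(t)^{-1}\Lut\,U(t)$, that the $(\gnt)$ are themselves eigenfunctions of $\Lut$ for the same eigenvalues, $\Lut\,\gnt=\nu_n(u_0)\,\gnt$; this is the focusing analog of Remark~\ref{Rq gnt vect. propre+ evolution<u|fn>}.

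The crux --- and the only genuine departure from the defocusing argument --- is the passage from ``both are eigenfunctions for $\nu_n(u_0)$'' to ``they are collinear''. In the defocusing case this was free because all eigenvalues of $\Luttilde$ are simple (Proposition~\ref{multiplicite val propres}); here the eigenvalues of $\Lu$ may have multiplicity two, so I must invoke the hypothesis that $\nu_n(u_0)$ is simple. Because the isospectral conjugation is by the unitary $U(t)$ (unitary since $\But$ is skew-adjoint), spectral multiplicities are preserved along the flow, so $\nu_n(u_0)$ remains a simple eigenvalue of $\Lut$ for every $t$. Its eigenspace is then one-dimensional, forcing $\tauct\fnuzero\parallelsum\gnt$. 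Finally, $\tauct$ is an isometry of $\Ltwo$, so $\tauct\fnuzero$ has $L^2$-norm one, as does $\gnt$ by construction; two collinear unit vectors differ by a unimodular scalar, which yields $\thetant\in\R$ with $\tauct\fnuzero=\ethetant\gnt$. I expect the simplicity-persistence point to be the only place requiring care, everything else being a transcription of the defocusing proof.
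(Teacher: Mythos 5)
Your proposal is correct and follows essentially the same route as the paper: the paper gives no separate proof for this lemma, deferring to the proof of Proposition~\ref{tauct fn}, which argues exactly as you do --- translation intertwines the Lax operator so that $\tau_{ct}\fnuzero$ is an eigenfunction of $L_{u(t)}$ for $\nu_n(u_0)$, the $(\gnt)$ are eigenfunctions for the same eigenvalues, and collinearity of unit vectors gives the phase. Your explicit observation that the simplicity hypothesis (absent in the defocusing case, where Proposition~\ref{multiplicite val propres} gives it for free) persists in $t$ by unitary conjugation is precisely the one point where the transcription needs care, and you handle it correctly.
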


\begin{lemma}[The analog of Corollary~\ref{evolution theta[n]} in the focusing case]\label{evolution theta[n] foc}
    Let $u_0$ be a function such that the eigenvalues $(\nu_n(u_0))$ are simple. Then, for all $n\,,p\in\Nzero\,,$ $t\in\R\,,$ we have
    \begin{enumerate}
        \item If $\langle 1\,|\fnuzero\rangle\neq 0$   then
            $$
                \theta_n(t)=-\nu_n^2t\,.
            $$
        \item  If $\langle u_0\,|\fnuzero\rangle\neq 0$  then  
            $$
                \theta_n(t)=-\nu_n^2t\,.
            $$
        \item If $\langle S\fpuzero|\fnuzero\rangle\neq0$ then
            \[
                \theta_n(t)=((\nu_p+1)^2-\nu_n^2)t-ct+\thetapt\,,
            \]
    \end{enumerate}  
    where $\theta_n(t)$ is the angle obtained in \eqref{tauctfn-gnt-foc}\,.
\end{lemma}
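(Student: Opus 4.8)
The plan is to imitate the proof of Corollary~\ref{evolution theta[n]}, replacing the defocusing data by their focusing analogs. The only structural input I need is the collinearity relation \eqref{tauctfn-gnt-foc} of Lemma~\ref{tauct fn foc}, which is available precisely because each $\nu_n(u_0)$ is assumed simple; it lets me write $\gnt=\eee^{-i\thetant}\tauct\fnuzero$ and likewise $\gpt=\eee^{-i\thetapt}\tauct\fpuzero$. I will substitute these expressions into the three evolution identities of Lemma~\ref{evolution in the gnt-foc}, exploiting that $\tauct$ is an isometry of $\Ltwo$ fixing the constant function $1$, together with the commutation $S\tauct=\eee^{ict}\tauct S$ already used in the defocusing setting.

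For the first case, assume $\ps{1}{\fnuzero}\neq0$. Substituting $\gnt=\eee^{-i\thetant}\tauct\fnuzero$ into $\ps{1}{\gnt}=\ps{1}{\fnuzero}\eee^{-i\nu_n^2t}$ and using $\ps{1}{\tauct\fnuzero}=\ps{1}{\fnuzero}$ (since $\tauct$ is unitary and $\tauct 1=1$) gives $\eee^{i\thetant}\ps{1}{\fnuzero}=\ps{1}{\fnuzero}\eee^{-i\nu_n^2t}$, whence $\thetant=-\nu_n^2t$ after cancelling the nonzero factor. The second case is identical: from $\ps{u}{\gnt}=\ps{u_0}{\fnuzero}\eee^{-i\nu_n^2t}$ with $u=\tauct u_0$, unitarity yields $\ps{\tauct u_0}{\tauct\fnuzero}=\ps{u_0}{\fnuzero}$, and again $\thetant=-\nu_n^2t$. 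For the third case, assume $\ps{S\fpuzero}{\fnuzero}\neq0$; plugging both collinearity relations into $\ps{S\gpt}{\gnt}=\ps{S\fpuzero}{\fnuzero}\eee^{i((\nu_p+1)^2-\nu_n^2)t}$ produces the prefactor $\eee^{-i\thetapt}\eee^{i\thetant}$ and the quantity $\ps{S\tauct\fpuzero}{\tauct\fnuzero}$. Using $S\tauct=\eee^{ict}\tauct S$ to write $\ps{S\tauct\fpuzero}{\tauct\fnuzero}=\eee^{ict}\ps{S\fpuzero}{\fnuzero}$ and cancelling the nonzero number $\ps{S\fpuzero}{\fnuzero}$ gives $\eee^{i(\thetant-\thetapt+ct)}=\eee^{i((\nu_p+1)^2-\nu_n^2)t}$, that is $\thetant=((\nu_p+1)^2-\nu_n^2)t-ct+\thetapt$.

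There is no deep obstacle here; the work is essentially a transcription of the defocusing argument, and it is what makes the analogy clean. The two points requiring genuine care are (a) the justification of \eqref{tauctfn-gnt-foc}, which is exactly where the simplicity hypothesis on $(\nu_n(u_0))$ enters — without it $\tauct\fnuzero$ and $\gnt$ need not be collinear, so the phase $\thetant$ would not even be well defined — and (b) upgrading each identity from an equality modulo $2\pi$ to a genuine equality of real-valued functions. For (b) I would observe that at $t=0$ one has $\tauct=\Id$ and $\gnt|_{t=0}=\fnuzero$, forcing $\thetant=0$ at $t=0$; since both sides of each claimed identity are continuous in $t$ and agree at $t=0$, the continuous branch of $\thetant$ coincides with the stated expression for all $t\in\R$.
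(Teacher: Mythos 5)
Your proof is correct and follows essentially the same route as the paper: the paper gives no separate argument for this focusing lemma, relying exactly on the transcription of the proof of Corollary~\ref{evolution theta[n]} that you carry out, substituting \eqref{tauctfn-gnt-foc} into the identities of Lemma~\ref{evolution in the gnt-foc} and using $S\tauct=\eee^{ict}\tauct S$ together with the unitarity of $\tauct$. Your closing remark upgrading the phase identities from equalities modulo $2\pi$ to genuine equalities via continuity and the normalization $\theta_n(0)=0$ is a point the paper leaves implicit, and it is handled correctly.
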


\vskip0.25cm
At this stage, we are equipped with the necessary tools 
to footstep the proof of the defocusing equation.
However, 
it is important to emphasize two fundamental 
differences between the Lax operators $L_u$ and $\Lutilde\,,$ which ultimately offer a considerably expanded set of traveling
waves for  \eqref{CS-focusing} in comparison to \eqref{CS-defocusing}~:
\begin{itemize}
    \item The gap between the eigenvalues differ between the focusing and the defocusing case (Proposition~\ref{multiplicite val propres}).
    \item The fact that the eigenvalues $\la_n$ of $\tilde{L}_u$ are not zero for any $n\in\N$\,.
\end{itemize}
Indeed, in  the defocusing case, since all the eigenvalues  satisfy $\la_n> \la_{n-1}+\frac12$ (Inequality~\eqref{simplicite val prop Lu tilde}) and $\la_n\neq 0$ for all $n\in\N$\,, then we obtained in Lemma~\ref{I=vide}\,,
\[
    \I(u)=\varnothing\,, \qquad \qlq u\in\Htwo\,,
\]
 where $\I(u)$ was defined in \eqref{I}\,. As a consequence, we inferred that if $u(t,x)=u_0(x-ct)$ is a traveling wave of \eqref{CS-defocusing}\,, then there exists at most one $N\in\N$ such that $\ps{u_0}{\fnuzero}=0$ for all $n\in\N\bk\{N\}\,.$
Now, for the focusing equation, recall we have previously observed in the second point of Remark~\ref{Rq I=vide}\,, that $\I(u)$ is of finite cardinal for all $u\in\Htwo\,$. In particular, for $u_0\in\Htwo\,,$ we denote by $m_1,\ldots,m_n$ its elements
\[
    \I(u_0)=\{m_1\,,\ldots\,, m_n\}\,.
\]

\begin{theorem}[Toward the Characterization of the traveling waves of \eqref{CS-focusing}]\label{trav waves for foc} 
    The traveling waves $u_0(x-ct)$ of \eqref{CS-focusing} are either rational functions or the plane waves $u(t,x)=C\eee^{iN(x-Nt)}$\,.
    In addition, the potentials
     \be\label{ex 1 trav waves foc}
                u(t,x):= \eee^{i\theta}\left(\alpha+
                \frac{\beta}{1-p\eee^{iN(x-c t)}}\right)\,, \qquad
            p\in \D^*\,,\, \theta\in \T\,, \, N\in\N\,,
    \ee
            where  $c= -N\left(1+\frac{2\alpha}{\beta}\right)$\,, $\,(\alpha,\beta)\in\R\times\R$ such that 
        \be\label{alpha, beta-fo}
               \alpha\beta +\frac{\beta^2}{1-\va{p}^2}=N\,,
        \ee
        and the potentials 
         \be\label{ex 2 trav waves foc}
            u(t,z)=\eee^{i\theta}\eee^{im(x-ct)}\left(\alpha+
                \frac{\beta}{1-p\eee^{i(x-c t)}}\right)
            \qquad p\in \D^*\,,\, \theta\in \T\,, \, m\in\N\,,
        \ee
        where $c=m$\,, $(\alpha,\beta)\in\R\times\R$ such that 
        \[
             \alpha\beta +\frac{\beta^2}{1-\va{p}^2}=1\,,\qquad \beta (m-1)=2\alpha\,,
        \]
        are parts of the set of the traveling waves of \eqref{CS-focusing}\,. 
        
\end{theorem}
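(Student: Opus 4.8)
The plan is to mirror the two–stage strategy of the defocusing case: first show that the spectral data $\big(\langle u_0\mid \fnuzero\rangle\big)_n$ of a traveling wave is supported on a finite set of indices, and then feed this into the inversion formula of Lemma~\ref{formule inversion spectrale} to force $u_0$ to be rational. The new feature, which is precisely what enlarges the family of waves, is that in the focusing case the set $\I(u_0)$ need no longer be empty (Lemma~\ref{I=vide}), so the phase recurrence coming from Lemma~\ref{evolution theta[n] foc} does not chain through every index. Throughout I would restrict, where needed, to the range of $n$ for which $\nu_n$ is simple, which by \eqref{gap-multiplicity2}--\eqref{liminf} excludes only a finite initial set of indices and hence is harmless for a finiteness conclusion; on that range Lemma~\ref{tauct fn foc} applies and the phases $\theta_n(t)$ are well defined.

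First I would set up the phase recurrence. For each $n\notin\I(u_0)$ one has $\langle S\fnMoinsUnuzero\mid \fnuzero\rangle\neq0$, so point~(3) of Lemma~\ref{evolution theta[n] foc} with $p=n-1$ gives $\theta_n(t)=\big((\nu_{n-1}+1)^2-\nu_n^2\big)t-ct+\thetanMoinsUnt$, exactly as in the defocusing proof. Summing this relation over a maximal run of consecutive indices that avoids $\I(u_0)$ produces, on that block, a closed expression for $\theta_n(t)$ analogous to \eqref{expression theta[n]}. If within a single such block there were two indices $n_1<n_2$ with $\langle u_0\mid f_{n_j}^{\,u_0}\rangle\neq0$, then point~(2) of Lemma~\ref{evolution theta[n] foc} forces $\theta_{n_j}(t)=-\nu_{n_j}^2t$; substituting both into the block formula yields two incompatible values of $c$, and the strict monotonicity $\nu_{n}>\nu_{n-1}$ valid for large $n$ (inequality~\eqref{liminf}) gives a contradiction, just as in Theorem~\ref{characterization u in the phase space <u|fn}.

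Hence each block carries at most one index with $\langle u_0\mid \fnuzero\rangle\neq0$. Since $\I(u_0)$ is of finite cardinal (second point of Remark~\ref{Rq I=vide}), the breaks partition the index set into finitely many blocks, the terminal one infinite; on it Corollary~\ref{<u|f[n]> <=> gamma[n]=0} guarantees $\langle u_0\mid \fnuzero\rangle=0$ for all large $n$, so altogether the support of $\big(\langle u_0\mid \fnuzero\rangle\big)_n$ is finite. The relations of Lemma~\ref{relation between <Sf|f> <u|f> and <1|f>} then localize $\big(\langle 1\mid \fnuzero\rangle\big)_n$ on the same finite set and give $S\fnMoinsUnuzero\parallelsum\fnuzero$ off the exceptional indices, so the vectors $X,Y$ and the matrix $M$ of the inversion formula \eqref{formule inversion spectrale u0} truncate to finite size. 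The tail argument of the defocusing proof (the nilpotent recursion for $\boldsymbol{\xi}$ culminating in identity~\eqref{Xn-zero}) then applies and exhibits $u_0(z)$ as a rational function, while the degenerate sub-case $p=\langle f_0\mid S^Nf_0\rangle=0$ again collapses onto the plane waves $C\eee^{iN(x-Nt)}\in\G_1$.

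It remains to confirm that the two displayed families genuinely solve \eqref{CS-focusing}, which I would do by direct substitution into the $z$–form \eqref{CS-defocusing-z} with the sign of the nonlinearity reversed. For \eqref{ex 1 trav waves foc} the computation is identical to the defocusing one, the sign change turning \eqref{cond alpha, beta complex def} into $\alpha\beta+\frac{\beta^2}{1-\va{p}^2}=N$ while leaving $c=-N\big(1+\frac{2\alpha}{\beta}\big)$. For the richer family \eqref{ex 2 trav waves foc}, the extra modulation $\eee^{im(x-ct)}$ couples to the rational part; expanding $i\partial_tu$, $(z\partial_z)^2u$ and $z\partial_z\Pi(\va{u}^2)u$ and matching the powers $(1-p\eee^{-ict}z)^{-k}$ for $k=1,2,3$ produces both $\alpha\beta+\frac{\beta^2}{1-\va{p}^2}=1$ and the compatibility condition $\beta(m-1)=2\alpha$, together with $c=m$. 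I expect the main obstacle to be the finiteness step rather than these substitutions: one must control how the breaks of $\I(u_0)$ cut the index set into blocks and rule out the multiplicity-$2$ eigenvalues spoiling the use of Lemma~\ref{tauct fn foc}, and it is precisely inequalities~\eqref{gap-multiplicity2}--\eqref{liminf} that confine these defects to an initial finite range and keep the block count finite.
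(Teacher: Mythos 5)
Your Step~1 is essentially the paper's own: the paper likewise uses the finite cardinality of $\I(u_0)$ (Remark~\ref{Rq I=vide}), the simplicity of $\nu_n$ for $n$ large from \eqref{liminf}, and the phase recurrence of Lemma~\ref{evolution theta[n] foc} to extract two incompatible values of $c$ from two nonzero coefficients beyond the last break, concluding $\ps{u_0}{\fnuzero}=0$ for all $n\geq N$. Where you genuinely diverge is Step~2. The paper does \emph{not} re-run the defocusing truncation in the eigenbasis: it observes that the tail vanishing together with Corollary~\ref{<u|f[n]> <=> gamma[n]=0} makes $u_0$ a finite gap potential, and then routes through Section~\ref{finite gap potentials} (Proposition~\ref{produit-de-Blashke}, which produces a Blaschke product $\uppsi$ with $L_{u_0}S^k\uppsi=(\nu+k)S^k\uppsi$, and Theorem~\ref{finite gap potential th characterization Un}, whose own Step~1 truncates the inversion formula in the adapted basis $(f_0,\ldots,f_{N-1})\cup(S^k\uppsi)_{k\geq0}$). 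That detour is what the paper needs anyway to obtain the explicit form and the coefficient conditions; your direct truncation is shorter for the bare dichotomy ``rational or plane wave'' and can be made rigorous: since $\nu_n\to+\infty$, for large $n$ the eigenvalues are simple and nonzero, the Corollary (used in the direction vanishing $\Rightarrow$ unit gaps) gives $\nu_n=\nu_{n-1}+1$, and Proposition~\ref{gap=0 and eigenspaces} (whose hypothesis $\nu_n\neq0$ is then satisfied) yields $Sf_{n-1}\parallelsum f_n$, hence a shift-like tail for $M$, $X_n=Y_n=0$ for large $n$, and the $\ell^2$ recursion annihilates the tail of $\boldsymbol{\xi}$.

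Three points need repair or honesty, though none is fatal to the statement as formulated. First, your invocation of Corollary~\ref{<u|f[n]> <=> gamma[n]=0} to get vanishing on the terminal block is circular: the equivalence takes the unit-gap property as input, which you do not yet have there; the vanishing must come, as your preceding sentence already provides, from the at-most-one-nonzero-per-block contradiction. Second, the claim that Lemma~\ref{relation between <Sf|f> <u|f> and <1|f>} localizes $\ps{1}{\fnuzero}$ ``on the same finite set'' fails at indices with $\nu_n=0$ (see the Appendix examples, where $\ps{u}{f_n}=0$ but $f_n\parallelsum 1$); it is only needed, and only true, for $n$ large, where $\nu_n\neq0$. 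Third, the assertion that the degenerate case $\ps{f_0}{S^Nf_0}=0$ ``again collapses onto the plane waves'' is not a transcription of the defocusing computation: in the focusing case the top-left block of $M$ is not the clean companion-type matrix (double and zero eigenvalues, nonempty $\I(u_0)$), and the paper disposes of this case through the finite-gap theorem instead. If you make the finite exceptional range explicit in the truncation and replace the two glossed citations accordingly, your route is a valid, more economical proof of the dichotomy, at the price of not yielding the explicit rational profiles that the paper's detour through Theorem~\ref{finite gap potential th characterization Un} delivers.
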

\begin{proof}\hfill
\\
    \textbf{Step 1. }(Spectral characterization of the traveling waves of \eqref{CS-focusing}). Let $u(t,x):=u_0(x-ct)$ be a traveling wave for \eqref{CS-focusing}\,. 
Our goal is to prove that there exists $N\in\N$ such that $\ps{u_0}{f_n^{u_0}}=0$ for all $n\geq N\,.$  Once more, in order to simplify the notation, we denote in the following $f_n$ instead of $f_n^{u_0}$ . Recall by the second point of Remark~\ref{Rq I=vide}\,, $\I(u_0)$ is of finite cardinal, that is there exists $m_1<\ldots< m_j\in\N$ such that 
\[
\begin{cases}
    \ps{Sf_{n-1}}{f_n}=0\,, \qquad\qlq n\in\{m_1\,,\ldots m_{j}\}
    \\
    \ps{Sf_{n-1}}{f_n}\neq 0\,, \qquad\qlq n\in\N\bk\{m_1\,,\ldots m_{j}\}
\end{cases}\,.
\]
Suppose that there exists an integer $\ell>\!>1\,,$ $\ell>m_j$ such that $\ps{u_0}{f_{\ell}}\neq0\,.$ Otherwise, we already have what we claim to prove. 
Then, 
\begin{itemize}
\item For all $n\geq \ell+1\,,$ the quantities $\ps{Sf_{n-1}}{f_n}\neq 0$\,.
    \item Since $\ell>\!>1\,,$ then by inequality \eqref{liminf}\,,  the eigenvalues $(\nu_n)_{n\geq \ell+1}$ are simple. This implies that Lemma \ref{tauct fn foc} holds for $n\geq \ell+1\,.$
\end{itemize}
Therefore,
using the third point of Lemma~\ref{evolution theta[n] foc}\,, we obtain, for all $n\geq \ell+1\,,$
\begin{align*}
    \theta_n(t)
    =&\,-\nu_n^2t+(\nu_{n-1}+1)^2t-ct+\theta_{n-1}(t)
    \\
    =&\,-\nu_n^2\,t+(\nu_{n-1}+1)^2t-\nu_{n-1}^2\,t+(\nu_{n-2}+1)^2t-2ct+\theta_{n-2}(t)\notag
    \\
    =&\quad\ldots\notag
    \\
    =&\, 
    -\nu_n^2\,t +(n-\ell)t+\nu_{\ell}^2\,t-(n-\ell)ct+2t\sum_{k=\ell}^{n-1}\nu_{k}+\theta_{\ell}(t)\,,\notag
\end{align*}
where $\theta_n(t)$ is the angle obtained in Lemma \ref{tauct fn foc}\,, and $\theta_\ell(t)=-\nu_{\ell}^2\,t$ thanks to  the second point of Lemma~\ref{evolution theta[n] foc}\,.
Hence, for all $n\geq \ell+1\,,$
\be\label{theta[n] foc G3}
    \theta_n(t)
    = 
    -\nu_n^2\,t +(n-\ell)t-(n-\ell)ct+2t\sum_{k=\ell}^{n-1}\nu_{k}\,,
\ee
As a consequence, there exists at most one  integer $N\geq\ell$ such that 
\[
    \ps{u_0}{f_{N}}\neq 0\,.
\]
Indeed, suppose for the sake of contradiction that there exist $n_2>n_1>\ell\,,$  such that  $\ps{u_0}{f_{n_1}}\neq 0$ and $\ps{u_0}{f_{n_2}}\neq 0\,.$ Then, combining  the second point of Lemma~\ref{evolution theta[n] foc}\,, and equation \eqref{theta[n] foc G3}\,, we obtain
\begin{gather*}
     \left|
        \begin{array}{l}\displaystyle
         c=1+\frac{2}{n_1-\ell}\,\sum_{k=\ell}^{n_1-1}\nu_k\,.
         \\[0.5cm]
         \displaystyle c=1+\frac{2}{n_2-\ell}\,\sum_{k=\ell}^{n_2-1}\nu_k\,.
     \end{array}
     \right.
     \end{gather*}
     Hence,  
     $$
        (n_2-\ell)\,\sum_{k=\ell}^{n_1-1}\nu_k=(n_1-\ell)\,\sum_{k=\ell}^{n_2-1}\nu_k\,,
     $$
     or
     \[
        (n_2-n_1) \,\sum_{k=\ell}^{n_1-1}\nu_k\,=\,(n_1-\ell)\,\sum_{k=n_1}^{n_2-1}\nu_k\,.
     \]
     As a result, 
     $$
        (n_1-\ell)(n_2-n_1)\,\nu_{n_1-1}\,\geq\, (n_1-\ell)(n_2-n_1)\,\nu_{n_1}\,,
     $$
     leading to a contradiction, since for $k\geq \ell\,,$ we have  $\nu_{k+1}> \nu_{k}\,.$ Therefore, there exists $N\in\N$ such that $\ps{u_0}{f_n}=0$ for all $n\geq N\,.$
\vskip0.25cm
\noindent
\textbf{Step 2.} (They are rational functions or potentials in $\G_1$)
Since $\ps{u_0}{f_n}=0$ for all $n\geq N$\,,  it follows by Corollary~\ref{<u|f[n]> <=> gamma[n]=0} that $\nu_n=\nu_{n-1}+1$ for all $n\geq N_2\,.$ 
Note that the  potentials satisfying $\nu_n=\nu_{n-1}+1$ for all $n\geq N_2\,,$ are referred to be ``finite gap potentials" for \eqref{CS-focusing}, and are studied deeply in Section~\ref{finite gap potentials}.
In particular, Theorem~\ref{finite gap potential th characterization}  provides a full characterization of these potentials in the state space. 
They are
either 
 $u(x)=C\eee^{iNx}$\,, $C\in\R^*\,,$ $N\in\Nzero\,,$ or  rational functions
    \be\label{finite gap preuve trav waves}
     u(x)=\eee^{im_0\, x}\prod_{j=1}^r\left(\frac{\eix-\pjbar}{1-p_j\eix}\right)^{m_j-1}\left(\alpha+\sum_{j=1}^r\frac{\beta_j}{1-p_j\eix}\right), \quad p_j\in\D^*\,,\ p_k\neq p_j\,,\ k\neq j\,,  
    \ee
    where, for $N\in\N\,,$ $\,m_0\in\llbracket0,N-1\rrbracket\,,\,$ $m_1, \ldots,m_r\in\llbracket 1, N\rrbracket$\,, such that  
$
    m_0+\sum_{j=1}^rm_j=N\,,
$  and $(\alpha, \,\beta_1\,,\,\ldots\,,\,\beta_r)\in\C\times\C^{r}$ satisfy for all $j=1\,,\,\ldots\,,r\,,$
\bes
    \overline{\alpha}\,\beta_{j}\,+\,\sum_{k=1}^{r}\,\frac{\alpha_{j}\,\overline{\alpha_k}}{1-p_j\overline{p_k}}
            =m_j\,.
\ees
\vskip0.25cm
It remains to verify that \eqref{ex 1 trav waves foc} and \eqref{ex 2 trav waves foc} are traveling waves for \eqref{CS-focusing}\,. To do so, one can  simply substitute them into the equation \eqref{CS-focusing}--equation and check that they satisfy the equation.

\end{proof}

\begin{Rq}
    As was observed in the previous proof all the traveling waves $u_0(x-ct)$ of \eqref{CS-focusing} are either 
 $u(t,x)=C\eee^{iN(x-Nt)}$ or the rational functions $u(t,x):=u_0(x-ct)$ where $u_0$ is defined in \eqref{finite gap preuve trav waves} and the constants $\alpha, \beta_j$ and $c$ can be described by substituting $u$ in the \eqref{CS-focusing}--equation.
\end{Rq}


\subsection{The \texorpdfstring{$L^2$}{L2}--norm and the speed} \label{speed and L2 norm foc}

In this subsection, we analyze the $L^2$--norm and the speed of the traveling waves of \eqref{CS-focusing} and establish the existence of stationary solutions for the focusing Calogero-Sutherland DNLS equation \eqref{CS-focusing}\,.

\begin{prop}\label{prop speed travel waves foc}\hfill
\begin{enumerate}[label=(\roman*),itemsep=2pt]
    \item For any $r>0\,,$ there exists  a non--trivial traveling wave $ u(t,x):=u_0(x-ct)$ for \eqref{CS-focusing} with 
    \[
        \|u_0\|_{L^2}=r\,.
    \]
    \item Let $u$ be a traveling wave for \eqref{CS-focusing} of the form \eqref{ex 1 trav waves foc}\,, then $u$ can propagate to the right or to the left with any speed $c\in\R\,.$ 
\end{enumerate}
\end{prop}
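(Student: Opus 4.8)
The plan is to work entirely with the explicit family \eqref{ex 1 trav waves foc}, treating $\beta\in\R\setminus\{0\}$ and $\va{p}^2\in(0,1)$ as free parameters and letting the constraint \eqref{alpha, beta-fo} fix $\alpha$. Both assertions then reduce to elementary calculus, and the arguments run parallel to the defocusing Proposition~\ref{norme L2 +c defo}; the one decisive change is the sign on the right-hand side of \eqref{alpha, beta-fo} ($+N$ against $-N$ in \eqref{alpha, beta- cond. defo}), which is exactly what enlarges the reachable range of both $\|u_0\|_{L^2}$ and $c$.

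For~(i) I would first compute the $L^2$-norm. Viewing $u_0$ as the boundary trace of an analytic function on $\D$ and expanding $\va{u_0(z)}^2$ into partial fractions as in the proof of Proposition~\ref{norme L2 +c defo}, the terms with vanishing zeroth Fourier coefficient drop out, leaving
\[
    \|u_0\|_{L^2}^2=\alpha^2+\alpha\beta+\left(\alpha\beta+\frac{\beta^2}{1-\va{p}^2}\right).
\]
Inserting \eqref{alpha, beta-fo} gives $\|u_0\|_{L^2}^2=\alpha^2+\alpha\beta+N$. I would then eliminate $\alpha$ via $\alpha=\frac{N}{\beta}-\frac{\beta}{1-\va{p}^2}$ and set $w:=\frac{\beta^2}{1-\va{p}^2}>0$, so that $\alpha=\frac{N-w}{\beta}$ and
\[
    \|u_0\|_{L^2}^2=\frac{(N-w)^2}{w\,(1-\va{p}^2)}+2N-w.
\]
As $\va{p}^2\to0^+$ the right-hand side tends to $\frac{N^2}{w}$, which sweeps out all of $(0,+\infty)$ as $w$ (equivalently $\beta$) varies. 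Fixing $N=1$ and a small $\va{p}^2>0$, the map $w\mapsto\|u_0\|_{L^2}^2$ is continuous, tends to $+\infty$ at both ends of $(0,\infty)$, and has a minimum that tends to $0$ as $\va{p}^2\to0$, so by the intermediate value theorem every prescribed value $r^2$ is attained. This is precisely where the $+N$ is used: the defocusing reduction produced $\alpha^2+\alpha\beta-N$, and the sign change is what lets the norm descend to arbitrarily small values.

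For~(ii) I would compute the speed directly from $c=-N\bigl(1+\frac{2\alpha}{\beta}\bigr)$. Dividing \eqref{alpha, beta-fo} by $\beta^2$ yields $\frac{\alpha}{\beta}=\frac{N}{\beta^2}-\frac{1}{1-\va{p}^2}$, whence
\[
    c=N\,\frac{1+\va{p}^2}{1-\va{p}^2}-\frac{2N^2}{\beta^2}.
\]
The first summand is continuous and ranges over $(N,+\infty)$ as $\va{p}^2$ runs through $(0,1)$, while the second ranges over $(0,+\infty)$ as $\beta$ runs through $\R\setminus\{0\}$; their difference therefore realises every real number. In particular $c$ can be made positive, negative, or zero, the stationary case $c=0$ corresponding to $\alpha=-\frac{\beta}{2}$ and, after substitution into \eqref{alpha, beta-fo}, to $\alpha^2=\frac{N(1-\va{p}^2)}{2(1+\va{p}^2)}$, which is the stationary wave announced in the introduction. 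This is to be contrasted with the defocusing identity $c=N\frac{1+\va{p}^2}{1-\va{p}^2}+\frac{2N^2}{\beta^2}>N$, where the two contributions add instead of cancelling.

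The computations themselves are routine once the partial-fraction decomposition is arranged. The only genuine care is needed at the limit $\va{p}\to0$ in~(i): since $p=0$ is excluded from $\D^*$, I would not evaluate at $p=0$ but instead run an intermediate-value argument on the continuous map $(\beta,\va{p})\mapsto\|u_0\|_{L^2}^2$, using that its infimum over $\beta$ tends to $0$ as $\va{p}\to0$. Establishing surjectivity of this norm map onto $(0,\infty)$, and of the speed map onto $\R$, is the main (and mild) obstacle.
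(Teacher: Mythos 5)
Your proposal is correct and takes essentially the same route as the paper's proof: the identical norm identity $\|u_0\|_{L^2}^2=\alpha^2+\alpha\beta+N$ obtained from \eqref{u L2} and \eqref{alpha, beta-fo}, the same elimination of $\alpha$, and the same speed formula $c=N\,\frac{1+\va{p}^2}{1-\va{p}^2}-\frac{2N^2}{\beta^2}$ of \eqref{speed trav waves foc}, with the limits $\beta\to0$ and $\va{p}^2\to0$ driving the norm to $+\infty$ and to $0$ respectively. The only deviation is cosmetic: for (ii) the paper specializes $\beta=\va{p}$, $N=1$ and inspects the one-variable function $\frac{x^2+3x-2}{x(1-x)}$ on $(0,1)$, whereas you vary $\va{p}^2$ and $\beta$ independently (and you make the intermediate-value step in (i) explicit), which is, if anything, slightly tidier.
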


\begin{Rq} \label{Rq speed focusing L2 norm infinity}
    Contrary to the defocusing case, we do not necessarily have that the traveling wave propagate with a speed $c\to\infty$ when $\|u\|_{L^2}^2\to\infty$\,. 
    For instance, take 
    $$
        u(t,x):= \frac{N}{\beta}-\frac{\beta}{1-\va{p}^2}+\frac{\beta}{1-p\eee^{iN(x-ct)}}\,,
            \qquad 
        \beta^2:=\frac{2N}{\frac{1+\va{p}^2}{1-\va{p}^2}-\frac{c}{N}}\,.
    $$ 
    The proof of this statement will be achieved in the end of the following proof.
\end{Rq}

\begin{proof}[Proof of proposition~\ref{prop speed travel waves foc}]
\textbf{The $L^2$--norm .}
 Let $u$ be a traveling wave for \eqref{CS-focusing} of the form \eqref{ex 1 trav waves foc}\,,
\bes
        u(t,x):= \eee^{i\theta}\left(\alpha+
        \frac{\beta}{1-p\eee^{iN(x-c t)}}\right)\,, \qquad p\in\D^*\,.
    \ees
    Our goal is to prove that the $L^2$ norm of these traveling waves can be arbitrary small or large.
    The computation of its $L^2$-norm has been performed in the proof of Proposition~\ref{norme L2 +c defo}. Therefore, by identity \eqref{u L2}, 
    \be \label{L2 norm u G2}
            \|u\|_{L^2}^2=\alpha^2+\alpha\beta
     +\alpha\beta +\frac{\beta^2}{1-\va{p}^2}  \;,
    \ee
    where the two reals $(\alpha,\beta)$ satisfies condition \eqref{alpha, beta-fo}
    \[
        \alpha\beta+\frac{\beta^2}{1-\va{p}^2} =N\,.
    \]
    That is, 
    \be\label{u L2 foc, |p|, beta}
        \|u\|_{L^2}^2
        =\frac{\va{p}^2}{1-\va{p}^2}\left(\frac{\beta^2}{1-\va{p}^2}-2N\right)+\frac{N^2}{\beta^2}\,.
    \ee
    Like for the defocusing case, $\|u\|_{L^2}^2$ is a continuous function of $\beta^2$ and $\va{p}^2\,.$ And by taking $\beta\to 0$ one has $\|u\|_{L^2}^2\to \infty\,.$ In addition, if $\va{p}\to 0$ then 
    $$
        \|u\|_{L^2}^2\underset{\va{p}^2\to0}{\sim}\; \frac{N^2}{\beta^2}\,.
    $$
    Hence, it is sufficient to take $\beta$ big enough so that $\|u\|_{L^2}$ can be arbitrary small.

\vskip0.25cm
\noindent
    \textbf{Speed : $\mathbf{c\in\R}\,$.} By Theorem~\ref{trav waves for foc}\,, there exists traveling waves for \eqref{CS-focusing} that propagates with a speed $c=-N\left(1+\frac{2\alpha}{\beta}\right)$ where $N\in\N$ and the two reals $(\alpha,\beta)$ satisfy 
    \[
        \alpha\beta +\frac{\beta^2}{1-\va{p}^2}=N\,,\qquad 0<\va{p}<1\,.
    \]
    That is 
    \begin{align}\label{speed trav waves foc}
        c=&\,-N\left(1+\frac{2N}{\beta^2}-\frac{2}{1-\va{p}^2}\right)\notag
        \\
        =&\,-N\left(-\frac{1+\va{p}^2}{1-\va{p}^2}+\frac{2N}{\beta^2}\right)\,.
    \end{align}
By taking, for example $\beta=\va{p}$\,, we infer 
\[
    c=N\,\frac{\va{p}^4+(2N+1)\va{p}^2-2N}{\va{p}^2(1-\va{p}^2)}
\]
Assume that $N=1\,,$ and by taking $x=\va{p}^2\in(0,1)$\,, we infer that  the continuous function
$$c(x):=\frac{x^2+3x-2}{x(1-x)}\,,$$
satisfies
$\displaystyle\inf_{x\in(0,1)}c(x)=-\infty$ and $\displaystyle
\sup_{x\in(0,1)}c(x)=+\infty\,.$

\vskip0.25cm
\noindent
\textbf{Proof of Remark~\ref{Rq speed focusing L2 norm infinity}\,.}
For a traveling wave $u$ of the form~\eqref{ex 1 trav waves foc}, 
\[
     u(t,x):= \alpha+
             \frac{\beta}{1-p\eee^{iN(x-c t)}}\,, \qquad \alpha\beta+\frac{\beta^2}{1-\va{p}^2}=N\,,
\]
where $N\in\N$\,, one has by \eqref{speed trav waves foc}\,, that $u$ propagates with a speed
\[
    c=-N\left(-\frac{1+\va{p}^2}{1-\va{p}^2}+\frac{2N}{\beta^2}\right)\,.
\]
Thus, for any $N\in\N$\,, let 
$$
    \beta:=\sqrt{\frac{2N}{\frac{1+\va{p}^2}{1-\va{p}^2}-\frac{\lambda}{N}}}\;, \qquad p\in\D\,,
$$ 
where $\lambda$ is a parameter in $\R\,,$  and with $\va{p}^2$ big enough so that $\beta$ is well defined.
Hence, one computes
\begin{align*}
    c=&\,-N\left(-\frac{1+\va{p}^2}{1-\va{p}^2}
    +\frac{2N}{\frac{2N}{\frac{1+\va{p}^2}{1-\va{p}^2}-\frac{\lambda}{N}}}\right)=\lambda\in\R\,.
\end{align*}
 That is, $u$ can propagate with any speed in $\R\,,$ regardless of the valued attained by the $L^2$--norm of $u$ 

\end{proof}


\begin{corollary}\label{Stationary waves}
The  potentials
  $$
            u(t,x):=\eee^{i\theta} \sqrt{\frac{N(1-\va{p}^2)}{2(1+\va{p}^2)}}
            \left(1-\frac{2}{1-p\eee^{iNx}}\right) \,, \qquad p\in\D^*\,,\, N\in\N\,, \, \theta\in\T\,,
        $$
     are stationary solutions for \eqref{CS-focusing}\,.
  Conversely, the defocusing \eqref{CS-defocusing} equation does not exhibit stationary wave solutions except the complex constant functions.

\end{corollary}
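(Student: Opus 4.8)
The plan is to obtain both assertions as direct specializations of the traveling-wave classifications already established, by imposing the stationary condition $c=0$.

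For the stationary waves of \eqref{CS-focusing}, I would start from the family produced in Theorem~\ref{trav waves for foc}, namely $u(t,x)=\eee^{i\theta}\big(\alpha+\frac{\beta}{1-p\eee^{iN(x-ct)}}\big)$ with $c=-N\big(1+\frac{2\alpha}{\beta}\big)$ and $(\alpha,\beta)\in\R^2$ subject to \eqref{alpha, beta-fo}. Imposing $c=0$ forces $1+\frac{2\alpha}{\beta}=0$, i.e.\ $\alpha=-\frac{\beta}{2}$. Substituting this into $\alpha\beta+\frac{\beta^2}{1-\va{p}^2}=N$ gives $\frac{1+\va{p}^2}{2(1-\va{p}^2)}\beta^2=N$, hence $\beta^2=\frac{2N(1-\va{p}^2)}{1+\va{p}^2}$, which is admissible since the right-hand side is positive for every $p\in\D^*$. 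Inserting $\alpha=-\frac{\beta}{2}$ and $\frac{\beta}{2}=\sqrt{\frac{N(1-\va{p}^2)}{2(1+\va{p}^2)}}$ back into the profile reproduces exactly the announced formula, the overall sign being absorbed into the free phase $\eee^{i\theta}$.

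For the converse I would invoke Theorem~\ref{traveling waves for the defocusing CS} together with Remark~\ref{Rq speed defocusing}. Imposing $c=0$ in $c=-N\big(1+\frac{2\alpha}{\beta}\big)$ again forces $\alpha=-\frac{\beta}{2}$, but then the defocusing constraint \eqref{alpha, beta-defo} would read $\frac{1+\va{p}^2}{2(1-\va{p}^2)}\beta^2=-N$, which is impossible because the left-hand side is nonnegative while $N\geq1$; equivalently, every nontrivial traveling wave of \eqref{CS-defocusing} has speed $c\geq N>0$. For the trivial waves $C\eee^{iN(x-Nt)}\in\G_1$ the speed is $c=N$, so $c=0$ forces $N=0$ and leaves only the constants $u\equiv C$. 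Hence the sole stationary solutions of \eqref{CS-defocusing} are the complex constant functions.

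Since everything reduces to specializing the completed classifications, there is no genuine obstacle here; the only points meriting a line of care are the harmless sign of the leading coefficient, absorbed into $\eee^{i\theta}$, and the sign of the resulting $\beta^2$, which is forced positive in the focusing case but would have to be negative in the defocusing case---precisely the asymmetry responsible for the existence versus non-existence of nontrivial stationary waves.
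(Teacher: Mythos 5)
Your proposal is correct and follows essentially the paper's route: the defocusing half is precisely the argument of Remark~\ref{Rq speed defocusing} and Proposition~\ref{norme L2 +c defo} that the paper's proof cites, while the focusing half is the $c=0$ specialization of the family \eqref{ex 1 trav waves foc} from Theorem~\ref{trav waves for foc}, whose validity as solutions the paper itself established by direct substitution --- so your derivation of $\alpha=-\frac{\beta}{2}$ and $\beta^2=\frac{2N(1-\va{p}^2)}{1+\va{p}^2}$ rests on the same computation the paper invokes when it says the stationary waves can be ``easily checked.'' As a minor bonus, your constraint $\frac{1+\va{p}^2}{2(1-\va{p}^2)}\,\beta^2=-N$ carries the correct factor $2$ that is missing from the displayed equation in the paper's remark following Proposition~\ref{norme L2 +c defo}, though this does not affect the impossibility conclusion there.
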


\begin{proof}
Through a straightforward calculation, one can  easily check that the obtained waves satisfy the \eqref{CS-focusing}--equation. On the other side, for the defocusing equation, we already established via Remark~\ref{Rq speed defocusing} or the second point of Proposition~\ref{norme L2 +c defo}\,, the non--existence of stationary waves $u(t,x)=u_0(x)$ for \eqref{CS-defocusing}\,.
  
\end{proof}


\section{\textbf{The finite gap potentials}} \label{finite gap potentials}
This section aims to examine the finite gap potentials associated with the Calogero--Sutherland DNLS equation \eqref{CS-DNLS} in both the focusing and defocusing cases. Remarkably, these potentials manifest as rational functions containing the traveling and solitary waves of \eqref{CS-DNLS}.

\vskip0.25cm
In the following, we adopt a slight abuse of notation, where for all $n\in\N$, we denote
 \be\label{gap gamma n}
    \gamma_n(u):=\nu_{n}-\nu_{n-1}-1\,,
\ee
the \textit{gap} between the consecutive eigenvalues in the focusing context, and 
\[
    \gamma_n(u):=\la_{n}-\la_{n-1}-1\,,
\] as the gap in the defocusing context. At this point, several observations can be made. First, recall that in the defocusing case, the $(\la_n)$ satisfies inequality~\eqref{simplicite val prop Lu tilde}, and thus, for all $n\in\N$\,, $\gamma_n(u)$ is non--negative in the defocusing case\,. Second, notice that since the eigenvalues $(\nu_n)$ and $(\la_n)$ of the Lax operators $L_u$ and $\Lutilde$ are invariant by the evolution, then for all $n\in \N\,,$
\[
    \gamma_n(u(t))=\gamma_n(u_0)\,, \qquad \qlq t\,.
\]
\begin{defi}[Finite gap potential]\label{finite gap potential}
A function $u\in L^2_+(\T)$ is said to be a {finite gap potential} of \eqref{CS-DNLS} if there exists $m\in \N$ such that 
    \be
        \g_n(u)=0\,,\qquad \qlq n\geq m\,,
    \ee
    where $\gamma_n$ is defined in \eqref{gap gamma n}\,.
\end{defi}

\vskip0.25cm
Recall that any function in the Hardy space $\Ltwo$ can be seen as a holomorphic function on the unit disc $\D$ whose trace on the boundary $\p \D$ is in $L^2\,.$ Hence, in what follows, we denote by $\mathscr{B}_N$ the set of finite Blaschke products of degree~$N$ :
\[
     \uppsi(x)=\eee^{i\theta}\,\prod_{k=1}^N\frac{\eee^{ix}-\pkbar}{1-p_k\eee^{ix}}\;, \qquad  \theta\in \R\,, \; p_k\in \D\,,
\]
which can be identified as the set of functions 
$$
    \uppsi(z)=\eee^{i\theta}\,\frac{z^N \bar{Q}(\frac{1}{z})}{Q(z)}\,, \qquad z\in \overline{\D}:=\lracc{\va{z}\leq 1}\,,\; \theta\in \R\,.
$$
where $$
    Q(z):=\prod_{j=1}^N (1-p_jz)\,, \qquad p_j\in \D\,.
$$
In other words, $z^N\bar{Q}(\frac{1}{z})$ is a Schur polynomial~\footnote{~A polynomial $q(z)=\sum_{k=1}^N a_kz^k$ is  called a Schur polynomial if all its roots are in the open unit disc $\D\,.$ \label{schur}} of degree $N$\,.

 \begin{Rq}
 By convention, we suppose that a finite Blaschke product of degree $0$ is a constant in $\C$.
 \end{Rq}

\begin{prop}
\label{produit-de-Blashke}
     Let $u$ be a finite gap potential of \eqref{CS-focusing}\,. There exist $(\nu,\uppsi)\in\R\times \mathscr{B}_n\,,$ $n\in \Nzero$ such that
 \be\label{Lu S=SLu sans second terme}
     L_u\,S^k\uppsi=(\nu+k)\, S^k\uppsi\,,
     \qquad 
     \qlq k\in\Nzero\,.
 \ee 
 In addition, the same goes for the defocusing Calogero--Sutherland DNLS equation~\eqref{CS-defocusing}.
\end{prop}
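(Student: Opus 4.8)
The plan is to single out, among the eigenfunctions of the Lax operator, one distinguished eigenfunction $\uppsi$ whose iterated shifts $S^k\uppsi$ exhaust the ``tail'' of the spectrum where all gaps equal $1$, and then to recognize $\uppsi$ as a finite Blaschke product through an inner–function argument. I treat the focusing case; the defocusing one is identical.

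First I would reduce everything to this tail. Since $u$ is a finite gap potential, there is $m$ with $\nu_n=\nu_{n-1}+1$ for all $n\geq m$; by Corollary~\ref{<u|f[n]> <=> gamma[n]=0} this is equivalent to $\ps{u}{f_n}=0$ for all $n$ beyond some rank, and by \eqref{liminf} the $\nu_n$ are simple and (since $\nu_n\to+\infty$) nonzero for $n$ large. I therefore fix a threshold $N_0$ such that, for every $n\geq N_0$, one has simultaneously $\ps{u}{f_n}=0$, $\nu_n$ simple, $\nu_n\neq0$, and $\nu_n=\nu_{n-1}+1$ for $n\geq N_0+1$. For each such $n\geq N_0+1$, Proposition~\ref{gap=0 and eigenspaces} applies: from $\nu_n=\nu_{n-1}+1$ and $\nu_n\neq0$ we get $Sf_{n-1}\in E_{\nu_n}$ or $f_n\in SE_{\nu_{n-1}}$, and the simplicity of $\nu_n$ and $\nu_{n-1}$ collapses either alternative to $Sf_{n-1}\parallelsum f_n$.

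Next I set $\uppsi:=f_{N_0}$ (a unit eigenfunction) and $\nu:=\nu_{N_0}$. Iterating $Sf_{n-1}\parallelsum f_n$ yields $S^k\uppsi\parallelsum f_{N_0+k}$ for every $k\in\Nzero$, while telescoping the unit gaps gives $\nu_{N_0+k}=\nu+k$. As $S^k\uppsi$ is collinear to the eigenfunction $f_{N_0+k}$ of eigenvalue $\nu+k$, I conclude $\Lu S^k\uppsi=(\nu+k)S^k\uppsi$ for all $k$, which is exactly \eqref{Lu S=SLu sans second terme}. The remaining, and I expect hardest, point is to show $\uppsi\in\mathscr{B}_{N_0}$. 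Because each $S^k\uppsi$ is a unit multiple of the orthonormal vector $f_{N_0+k}$ and $S$ is an isometry, the family $(S^k\uppsi)_{k\geq0}$ is orthonormal; in particular $\ps{\uppsi}{S^\ell\uppsi}=\widehat{\va{\uppsi}^2}(\ell)=0$ for all $\ell\geq1$, and since $\va{\uppsi}^2$ is real this forces $\va{\uppsi}^2$ to be the constant $\|\uppsi\|_{L^2}^2=1$ on $\T$. Hence $\uppsi$ is an inner function. Moreover $\overline{\mathrm{span}}\{S^k\uppsi:k\geq0\}=\overline{\mathrm{span}}\{f_{N_0+k}:k\geq0\}=\uppsi\,\Ltwo$, whose orthogonal complement is $\mathrm{span}\{f_0,\dots,f_{N_0-1}\}$, of dimension $N_0$; that is, the model space $\Ltwo\ominus\uppsi\,\Ltwo$ is $N_0$-dimensional. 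A (non-constant) inner function has a finite-dimensional model space exactly when it is a finite Blaschke product, whose degree equals that dimension \cite{GMR16}; therefore $\uppsi\in\mathscr{B}_{N_0}$ (with the degenerate case $N_0=0$ giving the unimodular constants).

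Finally, for the defocusing equation one runs the same argument with $\Lutilde$ and $(\la_n)$ replacing $\Lu$ and $(\nu_n)$. The situation is in fact cleaner: by Proposition~\ref{multiplicite val propres} all $\la_n$ are simple and $\la_n\geq0$, so the choice of the threshold $N_0$ and the collinearity $Sf_{n-1}\parallelsum f_n$ coming from Proposition~\ref{gap=0 and eigenspaces} hold without any multiplicity caveat, and the inner–function step goes through verbatim. This yields the stated $(\nu,\uppsi)\in\R\times\mathscr{B}_n$ in both cases.
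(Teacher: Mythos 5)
Your proof is correct, and its first two thirds coincide with the paper's: you pick the eigenfunction $\uppsi=f_{N_0}$ at the start of the unit-gap tail, use Proposition~\ref{gap=0 and eigenspaces} together with simplicity to collapse both alternatives to $Sf_{n-1}\parallelsum f_n$, telescope to get \eqref{Lu S=SLu sans second terme}, and deduce from $\ps{S^k\uppsi}{\uppsi}=0$, $k\geq1$, and the realness of $\va{\uppsi}^2$ that $\va{\uppsi}=1$ a.e.\ on $\T$. Where you genuinely diverge is the last step, identifying the inner function $\uppsi$ as a finite Blaschke product. The paper first upgrades the regularity of $\uppsi$: it shows $\uppsi$ is continuous on $\T$ via $D\uppsi=L_u\uppsi+u\Pi(\bar u\uppsi)\in L^1_+$ (equation \eqref{in L1}) and the Poisson kernel theorem, so that the holomorphic extension is continuous up to the closed disc, and then invokes its bespoke Lemma~\ref{produit_de_Blaschke} (finitely many zeros, factor them out, maximum principle on $\upsilon$ and $1/\upsilon$). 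You instead bypass all boundary-regularity considerations by a soft model-space argument: since $(S^k\uppsi)_{k\geq0}$ is orthonormal with closed span $\uppsi\,\Ltwo$ and orthocomplement $\operatorname{span}\{f_0,\dots,f_{N_0-1}\}$, the model space $\Ltwo\ominus\uppsi\,\Ltwo$ has dimension $N_0$, and the standard characterization in \cite{GMR16} (inner function with finite-dimensional model space $\iff$ finite Blaschke product, degree $=$ dimension) concludes. Your route is shorter, needs no continuity of $\uppsi$ (hence adapts more directly to low-regularity potentials, cf.\ Section~\ref{Rk on the regularity}, where the paper must work to justify \eqref{in L1}), and yields as a bonus the exact degree $\deg\uppsi=N_0$, a fact the paper only exploits later via \eqref{N(u)} and [GMR16, Corollary 5.18]; the paper's approach buys a self-contained elementary proof that avoids quoting the model-space theorem. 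One pedantic point you implicitly use: a function of $H^2_+$ with unimodular boundary trace is automatically bounded (Poisson representation), hence genuinely inner — worth a line, but standard.
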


\begin{proof}
   Let $u$ be a finite gap potential, that is $\nu_{n}=\nu_{n-1}+1$ for all $n\geq m\,.$  We denote by $n_0$ the eventual indices where  $\nu_{n_0}$ may vanish. Then, by  
Proposition~\ref{gap=0 and eigenspaces}\,, 
\[
    Sf_{n-1}\parallelsum f_{n}\,, \qquad \qlq n\geq \,N\coloneqq\max\{m, n_0\}+2\,,
\]
as the eigenvalues $(\nu_n)$ are simple for $n\geq m+1\,.$
Therefore,  letting  $\uppsi:=f_{N-1}\,,$ we have
\be\label{LuSkB}
    L_u\,S^k\uppsi=(\nu_{N-1}+k) S^k\uppsi\qquad \qlq\,k\in\Nzero\,.
\ee 
It remains to prove that $\uppsi$ is a finite Blaschke product. Observe that, by
taking the inner product of both sides of the previous identity with $\uppsi\,,$ 
\be\label{S^kfNfN}
    \ps{S^k\uppsi}{\uppsi}=0\, ,\qquad \qlq k\in \N\,  .
\ee
That is 
$$
    \ps{\,\va{\uppsi}^2}{\eee^{ikx}}=0, \quad\qlq k\in\N\,.
$$
or 
$$
    \ps{\,\va{\uppsi}^2}{\eee^{ikx}}=0, \quad\qlq k\in\Z\bk\{0\}\,,
$$
as  $\va{\uppsi}^2$ is real value.
Consequently, $|\uppsi|^2$ is a real constant, which can be supposed equal to $1$ since we have assumed that the eigenfunctions of $L_u$ constitute an orthonormal basis of $L^2_+(\T)$\,. Thus, $\va{\uppsi}=1 $ on $\T$.
 In order to conclude, we need the following lemma.

\begin{lemma}
\label{produit_de_Blaschke}
    Let $\upphi$ be an
    analytic function on the open unit ball that extends continuously
    to an inner function\footnote{A bounded analytic function $\uppsi$ on $\D$ is said to be \textit{inner} if $\va{\uppsi(\eee^{ix})}=1$ for almost every $x\,.$ Note that a Blaschke product is a  rational inner function.} on the closed unit disc. Then $\upphi\in\mathscr{B}_n\,$.
\end{lemma}

\begin{proof}
Given a holomorphic function $\upphi$ on the open unit ball that extends continuously to the unit circle while satisfying $\va{\uppsi}=1>0$ on $\T$ , we know that its zeros are finite, isolated and all localized  inside the open unit disk $\D$.
We denote them by $\overline{p_1},\ldots, \overline{p_n}$ . Hence,
$\upphi$ can be factorized as 
$$
\upphi(z)=\upsilon(z)\cdot\prod_{k=1}^{n} \frac{z-\pkbar}{1-p_kz}\, ,
$$
where $\upsilon$ is a holomorphic function without zeros on $\D$\,.
Therefore, $1/\upsilon$  is a holomorphic function on $\D$\,, which continuously extends to the unit circle while satisfying $\va{1/\upsilon}=1$ on $\T$\,. 
Thus, by the maximum principle, we infer that $\left|1/\upsilon\right|\leq1$ on $\D$\,.
Using the same argument on $\upsilon $ instead of $1/\upsilon$ , we deduce that $\va{\upsilon}\leq 1$ on the unit disc. As a consequence, $\va{\upsilon}=1$ on the close unit disc $\lracc{|z|\leq 1}$\, and so 
$$
\upphi(z)=\eee^{i\theta}\cdot\prod_{k=1}^{n} \frac{z-\pkbar}{1-p_kz}\, , \qquad \theta\in\R\,.
$$
\end{proof}
Coming back to the proof of Proposition~\ref{produit-de-Blashke}, we denote $\underline{\uppsi}$ the function obtained by the isometric isomorphism map 
$$
    \underline{\uppsi}(z)=\sum_{k\geq 0}\fr{\uppsi}(k)z^k\,, \;\; z\in\D \quad\longmapsto\quad \uppsi(x):=\sum_{k\geq 0}\fr{\uppsi}(k)\eee^{ikx}\,,\ \ x\in \T\,,
$$ 
In particular, since $\uppsi\in L^2_+(\T)$ then $\underline{\uppsi}\in \mathbb H_2(\mathbb D)\,,$ where 
\[
    \mathbb{H}_2(\mathbb D):=\lracc{u\in \mathrm{Hol}(\D)\,;\,\sup_{0\leq r<1}\int_0^{2\pi} \,\va{u(r\eee^{i\theta})}^2\;\dfrac{d\theta}{2\pi}<\infty}\,,
\]
Hence, by \cite[Theorem 4.5.3]{Ch},
$$
    \underline{\uppsi}(r\eee^{ix})=\frac{1}{2\pi}\int_0^{2\pi}P_r(x-\theta)\uppsi(\eee^{i\theta})\,dt\,, \quad 0\leq r<1\,,
$$
where $P_r$ denotes the Poisson Kernel
$$
    P_r(x-\theta)=\frac{1-r^2}{1-2r\cos(x-\theta)+r^2}\;.
$$
Note that the function
$\uppsi\in \DomLu:= H^{1}_+(\T)$ is continuous on $\T$ as
\be\label{in L1}
    D\uppsi=L_u\uppsi+u\Pi(\bar{u}\uppsi)\in L^1_+(\T)\,.
\ee
Therefore, the Poisson Theorem \cite[Theor\`eme 30]{Gi04}  implies that the holomorphic function $\underline{\uppsi}(r\eee^{ix})$ extends continuously to $\uppsi(\eee^{ix})$ as $r\to1\,.$ In addition, recall that $\va{\uppsi}=1$ on $\T$\,. Thus, applying the previous lemma, we infer that $\underline{\uppsi}$ is a Blaschke product and so is $\uppsi\,$.

\end{proof}

\vskip0.25cm
 At this stage, we aim to characterize the finite gap potentials of \eqref{CS-DNLS}\,. To this end, we regroup them according to the following procedure : for any finite gap potential $u$ of \eqref{CS-DNLS}\,, we denote by  $\mathcal{N}(u)$ the non--negative integer
\be\label{N(u)}
    \mathcal{N}(u):=\min\lracc{\,n\in\Nzero\;\mid\,\e\, \uppsi\in \mathscr{B}_n\;,\;L_u\,S^k\uppsi=(\nu+k) S^k\uppsi\,,\, \qlq k\geq 0 }\,,
\ee
and we define, for $N\in \Nzero$\,, the set
$$
     \boxed{\ \ \mathcal{U}_N:=\lracc{u \text{ finite gap potential }, \; \mathcal{N}(u)={N}}\,.\ }  
$$
This means that for any $u\in \mathcal{U}_N\,,$ there exists a finite Blaschke product $\uppsi_u$ of minimal degree $N\,,$  satisfying 
\be\label{LuSk uppsi[u]}
    L_u\,S^k\uppsi_u=(\nu_u+k) S^k\uppsi_u\,, \qquad \qlq k\in\Nzero\,,
\ee
where $\nu_u$ is the corresponding eigenvalue of $\uppsi_u$\,. That is $\lracc{S^k\uppsi_u\, \mid\,k\in\Nzero}$ are parts of the orthonormal basis of $\Ltwo\,.$ Besides, observe that,
since $\deg\uppsi_u=N\,,$ then there exists $N$ eigenfunctions $f_0,\ldots f_{N-1}$ of $L_u$ that generate the model space $(\uppsi_u\, L^2_+)^{\perp}\,$ which is of dimension $N$ \cite[Corollary 5.18]{GMR16}\,. We denote $\nu_0\,,\,\ldots\,,\,\nu_{N-1}$ the associated eigenvalues. Note that the latter $N$ eigenvalues  are not necessarily smaller than $\nu_u\,.$ 
We  summarize this discussion by the following diagram. For any $u\in\U_N\,,$

\setlength{\unitlength}{1.7cm}
\begin{picture}(20, 1.3) 
    
    \put(-0.25, 0.5){\vector(1, 0){14.2cm}}
    \put(2.1,0.21){{\scalebox{0.8}{$\nu_u$}}}
    \put(2.1,0.8){{$\uppsi_u$}}
    \put(2.2,0.5){{\circle*{0.1}}}

    \put(2.81,0.2){{\scalebox{0.8}{$\nu_u+1$}}}
    \put(2.9,0.8){{$S\uppsi_u$}}
    \put(3.2,0.5){{\circle*{0.1}}}
    
    \put(4.2,0.5){{\circle*{0.1}}}
    \put(3.8,0.2){{\scalebox{0.8}{$\nu_u+2$}}}
    \put(3.8,0.8){{$S^2\uppsi_u$}}
    
    
    \put(5.4,0.2){{$\ldots$}}
    
    \put(6.11,0.2){{\scalebox{0.8}{$\nu_u+n$}}}
    \put(6.13,0.8){{$S^n\uppsi_u$}}
    \put(6.5,0.5){{\circle*{0.1}}}
    
    \put(7.11,0.2){{\scalebox{0.8}{$\nu_u+n+1$}}}
    \put(7.13,0.8){{$S^{n+1}\uppsi_u$}}
    \put(7.5,0.5){{\circle*{0.1}}}

     \put(7.8,0.4){{$\ldots$}}
    
    \put(0.2,0.2){\textcolor{red}{$\nu_0$}}
    \put(0.2,0.65){\textcolor{red}{$f_0$}}
    \put(0.2,0.5){\textcolor{red}{\circle*{0.1}}}
    
    \put(1.8,0.2){\textcolor{red}{$\nu_1$}}
    \put(1.8,0.65){\textcolor{red}{$f_1$}}
    \put(1.8,0.5){\textcolor{red}{\circle*{0.1}}}
    
    \put(4.6,0.2){\textcolor{red}{$\nu_{N-1}$}}
    \put(4.6,0.65){\textcolor{red}{$f_{N-1}$}}
    \put(4.7,0.5){\textcolor{red}{\circle*{0.1}}}
\end{picture}
\vskip0.5cm
\noindent
Of course,
the same goes for the defocusing equation  with $\Lutilde$ instead of $\Lu\,,$ up to the fact that the remaining $N$ eigenvalues $\nu_0\,,\ldots\,\nu_{N-1}$ are necessarily smaller than 
$\nu_u\,,$ since the eigenvalues of $\Lutilde$ satisfy the property~\eqref{simplicite val prop Lu tilde}\,.
Besides, note that by taking the minimum in \eqref{N(u)} we guarantee that:
\begin{enumerate}
    \item If $u\in\lracc{v \text{ finite gap potential }, \; L_v\,S^k\uppsi=(\nu+k) \,S^k\uppsi\,,\; \uppsi\in \mathscr{B}_N}\,,$ then 
    $u\notin\lracc{v \text{ finite gap potential }, \; L_v\,S^k\uppsi=(\nu+k) \,S^k\uppsi\,,\; \uppsi\in \mathscr{B}_{N-1}}\,.$ 
    \item The set $\U_N$ is invariant under the evolution of \eqref{CS-DNLS}\,. (See Proposition \ref{U[N] invariant by the evolution}). 
\end{enumerate}

\vskip0.25cm
The following theorem aims to characterize the finite gap potentials of the Calogero--Sutherland DNLS \eqref{CS-DNLS} in the state space.
\begin{theorem}\label{finite gap potential th characterization Un}
    Let $N\in\N\,.$ A potential $u$ is in $\U_{N}$ 
    if and only if $u(x)=C\eee^{iNx}$\,, $C\in\C^*\,,$ or $u$ is a rational function 
    \be
    \label{u rational function th characterization of finite gap}
        u(x)=\eee^{im_0\, x}\prod_{j=1}^r\left(\frac{\eix-\pjbar}{1-p_j\eix}\right)^{m_j-1}\left(a+\sum_{j=1}^r\frac{c_j}{1-p_j\eix}\right)\,, \quad p_j\in\D^*\,,\ p_k\neq p_j\,,\ k\neq j\,, 
    \ee
    where $\,m_0\in\llbracket0,N-1\rrbracket\,,\,$ $m_1, \ldots,m_r\in\llbracket 1, N\rrbracket$\,, such that  
$
    m_0+\sum_{j=1}^rm_j=N\,,
$  and $(a, \,c_1\,,\,\ldots\,,\,c_{r})\in\C\times\C^{r}$ satisfy for all $j=1\,,\,\ldots\,,N-m\,,$
    \begin{enumerate}[label=(\roman*),itemsep=2pt]
        \item In the focusing case, 
        \be\label{cond a, ck-foc}
            \overline{a}\,c_{j}\,+\,\sum_{k=1}^{r}\,\frac{c_{j}\,\overline{c_k}}{1-p_j\overline{p_k}}=m_j\,,
        \ee
        \item In the defocusing case,
        \be\label{cond a, ck-defoc}
            \overline{a}\,c_{j}\,+\,\sum_{k=1}^{r}\,\frac{c_{j}\,\overline{c_k}}{1-p_j\overline{p_k}}=-m_j\,,
        \ee
    \end{enumerate}
    with $a\neq 0$ if $m_0\in\llbracket1,N-1\rrbracket\,.$
    Besides, if $N=0\,,$ then $u$ is a complex constant function.
\end{theorem}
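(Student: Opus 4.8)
The plan is to mirror the traveling-wave analysis: use the orthonormal basis of $\Ltwo$ attached to $u$ and reduce the reconstruction of $u$ to a finite-dimensional computation. I treat the focusing case, the defocusing one being identical after replacing $\Lu=D-T_uT_{\bar u}$ by $\Lutilde=D+T_uT_{\bar u}$, which only flips a sign in the final constraint. Fix $u\in\U_N$. By Proposition~\ref{produit-de-Blashke} there are a real number $\nu_u$ and a Blaschke product $\uppsi_u\in\mathscr{B}_N$ of minimal degree $N=\mathcal{N}(u)$ with $\Lu S^k\uppsi_u=(\nu_u+k)S^k\uppsi_u$ for all $k\geq0$. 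Since $S^k\uppsi_u=\eee^{ikx}\uppsi_u$, the family $\{\eee^{ikx}\uppsi_u:k\geq0\}$ is an orthonormal basis of the shift-invariant subspace $\uppsi_u\Ltwo$, and its orthogonal complement $K:=(\uppsi_u\Ltwo)^{\perp}$ is an $\Lu$-invariant subspace of dimension $N$ spanned by eigenfunctions $f_0,\dots,f_{N-1}$ (whose eigenvalues $\nu_0,\dots,\nu_{N-1}$ need not lie below $\nu_u$). Thus $\{f_0,\dots,f_{N-1}\}\cup\{\eee^{ikx}\uppsi_u:k\geq0\}$ is an orthonormal basis in which I would expand $u$.

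The first step is to show $u\in\C\uppsi_u\oplus K$. I apply the second identity of Lemma~\ref{relation between <Sf|f> <u|f> and <1|f>} to the consecutive eigenfunctions $f_p=S^{k-1}\uppsi_u$ and $f_n=S^k\uppsi_u$ (eigenvalues $\nu_u+k-1$ and $\nu_u+k$), for $k\geq1$. Since $Sf_p=S^k\uppsi_u=f_n$ while the prefactor $\nu_n-\nu_p-1=(\nu_u+k)-(\nu_u+k-1)-1=0$, the left-hand side of that identity vanishes, forcing $\ps{S^k\uppsi_u}{u}\,\ps{u}{S^k\uppsi_u}=\va{\ps{u}{S^k\uppsi_u}}^2=0$; that is $\ps{u}{\eee^{ikx}\uppsi_u}=0$ for every $k\geq1$ (this conclusion is insensitive to whether $\ps{1}{u}$ vanishes, which is precisely what lets the configurations with $m_0\geq1$ be handled uniformly). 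Consequently
\[
    u=\ps{u}{\uppsi_u}\,\uppsi_u+\sum_{n=0}^{N-1}\ps{u}{f_n}\,f_n\in\C\uppsi_u\oplus K,
\]
and since $\uppsi_u$ and every element of the model space $K$ of a finite Blaschke product are rational functions with denominator dividing $\prod_j(1-p_j\eix)^{m_j}$, the potential $u$ is rational.

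With $u$ rational and its denominator dividing $\prod_j(1-p_jz)^{m_j}$ — the data of $\uppsi_u$, whose zeros are $\pjbar$ of multiplicity $m_j$ together with a zero of order $m_0$ at the origin, $m_0+\sum_jm_j=N$ — I would expand $u$ in partial fractions and re-express it against the standard rational basis of $K$ and the extra line $\C\uppsi_u$. This produces exactly the factorization \eqref{u rational function th characterization of finite gap}, the prefactor $\eee^{im_0x}\prod_j\big(\tfrac{\eix-\pjbar}{1-p_j\eix}\big)^{m_j-1}$ recording that $u/\uppsi_u$ retains only simple poles at the zeros of $\uppsi_u$. Finally the constraints \eqref{cond a, ck-foc}--\eqref{cond a, ck-defoc} come from imposing the eigenvalue equation $\Lu\uppsi_u=\nu_u\uppsi_u$, i.e. $D\uppsi_u=\nu_u\uppsi_u+T_u\Pi(\bar u\uppsi_u)$ in the focusing case and $D\uppsi_u=\nu_u\uppsi_u-T_u\Pi(\bar u\uppsi_u)$ in the defocusing case, then expanding $\Pi(\bar u\uppsi_u)$ in partial fractions and matching the coefficient of each simple pole $\tfrac{1}{1-p_j\eix}$, which yields $\overline a\,c_j+\sum_{k=1}^r\tfrac{c_j\overline{c_k}}{1-p_j\overline{p_k}}=\pm m_j$ with the sign dictated by the operator.

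It remains to reverse the implication and to settle the boundary configurations. For the converse I start from a $u$ of the stated form obeying \eqref{cond a, ck-foc} (resp. \eqref{cond a, ck-defoc}) and verify, by the same residue computation, that $\uppsi_u$ is an eigenfunction of $\Lu$ (resp. $\Lutilde$) and that all its shifts $S^k\uppsi_u$ are eigenfunctions of eigenvalue $\nu_u+k$; hence $\gamma_n(u)=0$ for large $n$ and $u$ is a finite gap potential, while a degree count shows $\uppsi_u$ cannot be replaced by a Blaschke product of smaller degree, so $\mathcal{N}(u)=N$. The requirement $a\neq0$ when $m_0\neq0$ is exactly the nondegeneracy that prevents such a reduction, the monomials $u=C\eee^{iNx}$ arise as the case $r=0$ (with $\uppsi_u=\eee^{iNx}$), and $N=0$ forces $u$ constant. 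The main obstacle is the content of the third step: extracting the precise numerator structure when multiplicities $m_j>1$ or a zero of order $m_0>0$ occur, and carrying through the partial-fraction and residue matching that reproduces \eqref{cond a, ck-foc}--\eqref{cond a, ck-defoc}; this is where the Toeplitz algebra of $\Lu$, the unimodularity $\va{\uppsi_u}=1$ on $\T$, and the combinatorics of the multiplicities must be reconciled.
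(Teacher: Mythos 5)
Your route to rationality is correct and in fact slightly more economical than the paper's: where the paper reduces the inversion formula $u(z)=\ps{(\Id-zM)^{-1}X}{Y}$ to an $(N+1)\times(N+1)$ block, you obtain $\ps{u}{S^k\uppsi_u}=0$ for $k\geq1$ (your application of the second identity of Lemma~\ref{relation between <Sf|f> <u|f> and <1|f>} with $f_p=S^{k-1}\uppsi_u$, $f_n=S^k\uppsi_u$ is the same computation as the paper's derivation of \eqref{Xn}) and then decompose $u\in\C\uppsi_u\oplus(\uppsi_u\Ltwo)^{\perp}$, the model space consisting of rationals with denominator $Q(z)=\prod_j(1-p_jz)^{m_j}$. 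Your mechanism for the constraints also matches the paper: since $\bar u\,\uppsi_u$ extends holomorphically to $\D$, one has $\Pi(\bar u\,\uppsi_u)=\bar u\,\uppsi_u$, the eigenvalue equation becomes $\va{u}^2=z\p_z\log\uppsi_u-\nu_u$ on $\p\D$ (identity \eqref{|u|2=zpzlog uppsi[u]-lambda[u]}), and applying $\Pi$ and matching the coefficients of $(1-p_jz)^{-1}$ yields \eqref{cond a, ck-foc} and \eqref{cond a, ck-defoc}.

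However, there is a genuine gap, which you name but do not close: the factorization \eqref{u rational function th characterization of finite gap} does \emph{not} follow from $u\in\C\uppsi_u\oplus(\uppsi_u\Ltwo)^{\perp}$. A generic element of the model space is $P(z)/Q(z)$ with an arbitrary numerator of degree $\leq N-1$; nothing in that decomposition forces $P$ to vanish to order $m_j-1$ at $\pjbar$, nor to order $m_0$ at $0$, nor gives $a\neq0$ and $\deg q=r$ when $m_0\geq1$. This is precisely the content of the paper's Step~3, which requires two separate arguments. First, the identity \eqref{|u|2=zpzlog uppsi[u]-lambda[u]} is used \emph{quantitatively}: its right-hand side has only simple poles at $1/p_j$, while $u(z)\bar u(1/z)$ carries the denominator $\prod_j(1-p_jz)^{m_j}$; cancellation forces $\overline{P_{N-m_0}}(1/z)$ to vanish to order $m_j-1$ at $1/p_j$, i.e.\ $(z-\pjbar)^{m_j-1}\mid P_{N-m_0}$. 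Second, for the factor $z^{m_0}$ the paper exploits minimality of $\deg\uppsi_u$: if $\ps{u}{\uppsi_u}=0$ then $S^*\uppsi_u\in\mathscr{B}_{N-1}$ would generate the same eigenvalue ladder via \eqref{commutateur [Lu,s]}, contradicting $u\in\U_N$; hence $\ps{u}{\uppsi_u}\neq0$, which gives both $\deg P=N$ (whence $a\neq0$) and, through the first identity of Lemma~\ref{relation between <Sf|f> <u|f> and <1|f>}, $\ps{1}{u}=0$; one then peels off factors of $S$ inductively by pairing identities of the type $L_uz^k=kz^k-\ps{1}{\,\cdot\,}u$ against $\uppsi_u$ to reach $u=S^{m_0}w$. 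None of this is routine bookkeeping, so your plan stops short of the theorem at its technical heart. Likewise, in the converse your ``degree count'' for minimality needs the paper's actual argument: a smaller Blaschke product $\upphi$ would, by \eqref{liminf}, satisfy $S^k\uppsi=S^{k'}\upphi$ for some $k,k'$, and rerunning Steps~1--4 would assign $u$ a different power $m_0$, a contradiction.
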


\begin{Rq}\label{rq valeur propre finite gap}
    As we shall see in Step 4 of the proof of Theorem~\ref{finite gap potential th characterization Un}, if $u\in\U_N\,,$ then  the eigenvalue of $L_u$ associated 
    \begin{itemize}[itemsep=5pt]
        \item with the Blaschke product $\uppsi_u=\eee^{i\theta} \eee^{iNx}$ if $u=C\eee^{iNx}\,,$ is given by 
            \begin{enumerate}[label=(\roman*),itemsep=2pt]
                \item $\nu_u=N-C^2$ in the focusing case.
                \item  $\la_u=N+C^2
            $ in the defocusing case.
            \end{enumerate}
        \item with the Blaschke product
        $$
            \uppsi_u=\eee^{i\theta}\,\eee^{im_0\cdot x}\prod_{j=1}^{r}\left(\frac{\eee^{ix}-\pjbar}{1-p_j\eee^{ix}}\right)^{m_j}\,,\quad \theta\in\R\,, p_j\neq p_k\,, j\neq k\,,
        $$ if $u$ is the rational function \eqref{u rational function th characterization of finite gap}\,, is given by 
            \begin{enumerate}[label=(\roman*),itemsep=2pt]
                \item \label{nu[u]}$\displaystyle\nu_u=m_0-\va{a}^2-
            \sum_{j=1}^{r} \,a\,\overline{c_j}$ in the focusing case.
                \item \label{lambda[u]}$\displaystyle\la_u=m_0+\va{a}^2+
            \sum_{j=1}^{r} \,a\,\overline{c_j}\,
            $ in the defocusing case.
            \end{enumerate}
    \end{itemize}
\end{Rq}

\vskip0.25cm

\vskip0.25cm
In order to establish this theorem, we recall a specific case of formula~\eqref{formule inversion spectrale u0}\,.
\begin{Rq} \label{formule d'inversion f_n} Let $(f_n)$ be an orthonormal basis of $\Ltwo\,.$
    For any $n\geq 0$\,,   
$$  
    f_n(z)=\left\langle(\Id-z M)^{-1} \mathds{1}_{n} \mid Y\right\rangle_{\ell^{2}}\,, \qquad z\in\D\,,
$$
where $\mathds{1}_n$ and $Y$ are the column vectors 
$$
\mathds{1}_{n}:=\left(\delta_{p n}\right)_{  p \geq 0}\ ,\qquad  Y:=\left(\left\langle 1 \mid f_{m}\right\rangle\right)_{m \geq 0}
$$ 
and $M$ is the  matrix representation of the operator $S^{*} $ in the $\left(f_m\right)$--basis 
$$
    M=\left(M_{mp}\right)_{mp \geq 0}, \qquad M_{mp } =\left\langle f_{p} \mid S f_{m}\right\rangle\ .
$$
\end{Rq}

\textit{In what follows, we denote by $\C_{\leq N}[X]$ the set of polynomials $P$ in complex coefficients with degree at most $N$ and by $\C_N[X]$ those of degree $N\,.$}

\begin{proof}[Proof of Theorem \ref{finite gap potential th characterization Un}]
We present the proof for the focusing case. Note that the same arguments can be performed to deduce the result in the defocusing case.
    The key ingredient is the inversion spectral formula \eqref{formule inversion spectrale u0}
    \be\label{u-formule-inversion}
         u(z)=\ps{(\Id-z M)^{-1}\,X}{Y}\,, 
    \ee
    where $X\,,\, Y$ and $M$  are defined in \eqref{X,Y,M}\,.   
    The proof will be split in 5 steps. 

    \vskip0.3cm
    
Let $u\in \mathcal{U}_N\,,$ then by Proposition~\ref{produit-de-Blashke} there exists a finite Blaschke product 
\be\label{Blaschke product Un}
    \uppsi_u(z)=\eee^{i\theta}\;\frac{\eee^{iNx}\bar{Q}(1/z)}{Q(z)}\;; \quad Q(z):=\prod_{j=1}^N (1-p_jz)\,, \qquad p_j\in \D\,,
\ee
such that \eqref{Lu S=SLu sans second terme} is satisfied
\[
     L_u\,S^k\uppsi_u=(\nu_u+k)\, S^k\uppsi_u\,,
     \qquad 
     \qlq k\in\Nzero\,.
\]
\vskip0.2cm
\noindent
\textbf{Step 1.}
As a first step, we prove that any $u\in\U_N$ must be a rational function
\[
    u(z)=\frac{P(z)}{Q(z)}\,, \qquad P\in\C_{\leq N}[z]\,, 
\]
where $Q(z)$ is the same denominator of the Blaschke product $\uppsi_u(z)$ associated with $u\in\U_N\,.$
Indeed, first observe that  combining  \eqref{Lu S=SLu sans second terme} with the commutator identity \eqref{commutateur [Lu,s]}
leads to 
\be\label{Xn}
    \ps{u}{S^k\uppsi_u}=0\,, \qquad\qlq k\geq 1\, .
\ee
Hence, we infer thanks to Lemma~\ref{relation between <Sf|f> <u|f> and <1|f>}\,, that the infinite matrices $M\,,\,X\,,$ and $Y$ of \eqref{u-formule-inversion} written in the basis $(f_k)_{k=0}^{N-1}\cup (S^k\uppsi_u)_{k\geq 0}$ are of the form
$$
    M= 
    \left(\begin{array}{cccc|cccc}
    \ps{f_0}{S f_0} & \ldots& \ps{f_{n-1}}{S f_0} & \ps{\uppsi_u}{S f_0}&&&
    \\
    \vdots&  & \vdots&\vdots&&&
    \\ 
    \langle f_0\mid S f_{N-1}\rangle & \ldots &\ps{f_{N-1}}{S f_{N-1}} &\ps{\uppsi_u}{S f_{N-1}}& &&\\
    0&\ldots&\ldots&0&1
    \\\hline
    0& \ldots&\ldots&0&0&1\\
    \vdots&&&&\vdots&\ddots&\ddots
    \end{array}\right)\ ,\; n\in \N
$$
$$
    X=\left(\begin{array}{c}
    \ps{u}{f_0}\\
    \vdots\\
    \ps{u}{f_{N-1}}\\
    \ps{u}{\uppsi_u}\\
    0\\
    \vdots
    \end{array}\right)
\,,
\quad
    Y=\left(\begin{array}{c}
    \ps{1}{f_0}\\
    \vdots\\
    \ps{1}{f_{N-1}}\\
    \ps{1}{\uppsi_u}\\
    0\\
    \vdots
    \end{array}\right)\,.
$$    
Therefore,  following the same procedure presented in the proof of Theorem~\ref{traveling waves for the defocusing CS}\,, one can observe that the infinite matrices $M\,, \,X$ and $Y$ can be reduced to finite matrices that involve only the first $N+1$ coordinates of each of these matrices. That is,
\[
    u(z)=\ps{\left(\Id-zM_{\leq N}\right)^{-1}X_{\leq N}}{Y_{\leq N}}_{\C^{N+1}\times\C^{N+1}}\,,
\]
where $M_{\leq N}=(M_{mn})_{0\leq m,n\leq N}\,,$ $\,X_{\leq N}=(X_n)_{\,0\leq n\leq N}$ and $\,Y_{\leq N}=(Y_n)_{\,0\leq n\leq N}\,.$ 
As a consequence, $u$ is a rational function 
$$
   u(z)=\frac{P(z)}{\det\left(\Id-zM_{\leq N}\right)}\;,\quad P\in \mathbb{C}_{\leq N}[z]\,.
$$
Note that  $\det\left(\Id-zM_{\leq N}\right)$ coincides with the denominator of the eigenfunction 
$$
    \uppsi_u=\eee^{i\theta}\,\frac{z^{N}\bar{Q}(1/z)}{Q(z)}\,,
$$
since by Remark~\ref{formule d'inversion f_n}\,, $\,\uppsi_u$ is also expressed via the inversion spectral formula 
$$
    \uppsi_u(z)
    =\left\langle(\Id-z M_{\leq N})^{-1} \mathds{1}_{N} \mid Y_{\leq N}\right\rangle_{\C^{N+1}\times \C^{N+1}}
    =\frac{\ps{\mathrm{Com}(\Id-zM_{\leq N})^T\mathds{1}_{N}}{Y_{\leq N}}}{\det\left(\Id-zM_{\leq N}\right)}\,,
$$
and hence $\det\left(\Id-zM_{\leq N}\right)=Q(z)\,.$ 
Thus,  
\be\label{u=P/Q, Q [pkin D]}
    u(z)=\frac{P(z)}{Q(z)}\,, \qquad P\in\C_{\leq N}[z]\,, \quad Q(z)=\prod_{k=1}^N (1-p_kz)\,, \ p_k\in\D\,.
\ee

\vskip0.25cm\noindent
\textbf{Step~2.} In this step, we prove that 
if $u\in\U_N$ then 
\be\label{|u|2=zpzlog uppsi[u]-lambda[u]}
    \va{u}^2=z\p_z\log \uppsi_u-\nu_u\,, \qquad \text{ on } \p\D\,.
\ee
Indeed, recall that $L_u\uppsi_u=\nu_u \uppsi_u$\,. Then by definition of $L_u=z\p_z-T_u \Tubar\,,$
\be\label{Lu uppsi[u]=lambda uppsi[u]}
    z\p_z\uppsi_u-u\Pi(\overline{u}\,\uppsi_u)=\nu_u \uppsi_u\,.
\ee
On $\p\D\,,$ 
$$
    \overline{u}\,\uppsi_u(z)
    =\eee^{i\theta}\,\frac{z^N\,\bar{P}(1/z)}{z^N\,\bar{Q}(1/z)}\cdot \frac{z^{N}\bar{Q}(1/z)}{Q(z)}
    =\eee^{i\theta}\,\frac{z^N\,\overline{P}(1/z)}{Q(z)}\,,
$$
 extends as a holomorphic function on $\D\,.$ Hence, 
$\Pi(\overline{u}\,\uppsi_u)=\overline{u}\,\uppsi_u\,, $ 
and so identity \eqref{Lu uppsi[u]=lambda uppsi[u]} can be read as
$$
    \frac{z\p_z\uppsi_u}{\uppsi_u}=|u|^2+\nu_u\,,
$$
implying that identity~\eqref{|u|2=zpzlog uppsi[u]-lambda[u]} holds.

\vskip0.25cm\noindent
\textbf{Step~3.} In this step, we prove that  the rational function $u$ obtained in Step 1 can be rewritten either as $u(z)=Cz^N\,,$ $C\in\C^*\,,$ or
\[
    u(z)=z^{m_0}\prod_{j=1}^r\left(\frac{z-\pjbar}{1-p_jz}\right)^{m_j-1}\frac{q(z)}{\prod_{j=1}^r(1-p_jz)}\,, \qquad p_j\in\D^*\,,\ p_k\neq p_j\,,\ k\neq j\,,
\]
where  $m_0\in\llbracket0,N-1\rrbracket\,,\,$ $m_1, \ldots,m_r\in\llbracket 1, N\rrbracket$\,,  
$$
    m_0+\sum_{j=1}^rm_j=N\,,
$$ 
and such that  $\deg(q)=r$ if $m_0\neq 0\,.$ 
Indeed, we write
\eqref{Blaschke product Un} 
as $\uppsi_u=\eee^{i\theta}z^{N}$ (if all the $p_k$ in \eqref{Blaschke product Un} vanish), or
\be\label{Blaschke m0}
    \uppsi_u=\eee^{i\theta}z^{m_0}\prod_{j=1}^r\left(\frac{z-\pjbar}{1-p_jz}\right)^{m_j}\,, \qquad  p_j\in\D^*\,,\ p_k\neq p_j\,,\ k\neq j\,, \ \theta\in\R
\ee
where  $m_0\in\llbracket0,N-1\rrbracket\,,\,$ $m_1, \ldots,m_r\in\llbracket 1, N\rrbracket$\,, such that  $m_0+\sum_{j=1}^rm_j=N$\,. 
As a first point, we prove when $m_0\geq 1\,,$ then the numerator $P$ of $u$ can be factorized as $P(z)=z^{m_0}P_{N-m_0}(z)$ with $P_{N-m_0}\in\C_{N-m_0}[z]\,.$ 
Let $m_0\geq 1$ then 
 $\ps{u}{\uppsi_u}\neq 0\,$, because otherwise there exists a Balschke product $\upphi_u=S^*\uppsi_u$ of degree $N-1\,,$
 such that  by the commutator identity~\eqref{commutateur [Lu,s]}\,,
\[
    L_u S^k\upphi_u =(\nu_k-1+k)S^k\upphi_u\,, \qquad k\geq 0\,,
\]
meaning that $u\in \U_{N-1}\,,$ which is a contradiction with the fact that $u\in\U_N\,$. Hence, $\ps{u}{\uppsi_u}\neq 0\,. $ This leads  to
\begin{enumerate}[label=(\roman*)]
    \item The numerator $P$ of $u(z)$  must be degree $N\,.$ \label{deg P=N}
    \item $\ps{1}{u}=0$\label{moyenne nulle Un}\,.
\end{enumerate}
Indeed, for \ref{deg P=N}\,, it is sufficient to note that 
\[
    0\neq\ps{\uppsi_u}{u}=\intc \eee^{i\theta}\,\frac{z^{N}\bar{Q}(1/z)}{Q(z)}\, \frac{\bar{P}(1/z)}{\bar{Q}(1/z)}\, \dz
    =
    \eee^{i\theta}\,z^N\bar{P}(1/z)_{\,|z=0}
\]
For \ref{moyenne nulle Un}, observe by Lemma~\ref{relation between <Sf|f> <u|f> and <1|f>}\,, 
\[
    \ps{1}{u}\underbrace{\ps{u}{\uppsi_u}}_{\neq 0}=-\nu_u\ps{1}{\uppsi_u}\,,
\]
where the right--hand side vanishes since $\uppsi_u=S\upphi$\,, $\upphi\in\Ltwo$ for $m_0\geq1\,.$
Therefore, if $m_0=1\,,$ then by  \ref{deg P=N} and \ref{moyenne nulle Un},
\[
    u(z)=\frac{z\,P_{N-1}(z)}{\prod_{j=1}^{r}(1-p_jz)^{m_j}}\,, \qquad P_{N-1}\in\C_{N-1}[z]\,,\quad  p_k\in\D^*\,, \ p_j\neq p_k\,.
\]
Now, if $m_0=2\,,$ we have by \ref{moyenne nulle Un} $\ps{u}{1}=0\,$, that is $u=Sv$ with $v\in\Ltwo\,.$ Thus, by the definition of $L_u=z\p_z-u\Pi(\bar{u} \, \cdot)$\,,
\[
    L_u\,z= z- \ps{1}{v}u\,.
\]
Taking the inner product of the latter identity with $\uppsi_u\,,$
\[
    (\nu_u-1)\ps{z}{\uppsi_u}=-\ps{1}{v}\underbrace{\ps{u}{\uppsi_u}}_{\neq\, 0}\,.
\]
Note that for $m_0= 2$ we have $\ps{z}{\uppsi_u}=0\,.$
This implies that $\ps{1}{v}=0$ 
leading to $u=S^2w$ with $w\in\Ltwo$\,. 
Therefore, if $m_0=2\,,$ then $u$ can be decomposed as
 \[
    u(z)=\frac{z^2\,P_{N-2}\,(z)}{\prod_{j=1}^{r}(1-p_jz)^{m_j}}\,, \qquad P_{N-2}\in\C_{N-2}[z]\,,\quad  p_k\in\D^*\,, \ p_j\neq p_k\,.
\]
Now, if $m_0=3$ then by repeating the same above procedure and  taking the inner product of 
\[
    L_uz^2=2z^2-\ps{1}{w} u
\]
with $\uppsi_u\,,$ one obtains
\[
    (\nu_u-2)\underbrace{\ps{z^2}{\uppsi_u}}_{=0}=-\ps{1}{w}\underbrace{\ps{u}{\uppsi_u}}_{\neq \,0}\,.
\]
That is, $\ps{1}{w}=0$ i.e, $u=S^3\underline{w}$ with $\underline{w}\in\Ltwo\,$...
Therefore, for all $m\in\llbracket0,N-1\rrbracket\,,$
 \[
    u(z)=\frac{z^{m_0}\,P_{N-m_0}(z)}{\prod_{j=1}^{r}(1-p_jz)^{m_j}}\,, \qquad  p_j\in\D^*\,,\ p_j\neq p_k\,,
\]
where $P_{N-m_0}\in\C_{N-m_0}[z]$ if $m_0\geq 1$ thanks to \ref{deg P=N}\,.
And if $m_0=N$ i.e. all the $p_j$ in \eqref{Blaschke m0} vanish,  then $u(z)=Cz^N\,,$ $C
\in\C^*$.
Finally, it remains to prove that $(z-\pjbar)^{m_j-1}$ divides  the numerator of $u\,.$ Indeed, by identity \eqref{|u|2=zpzlog uppsi[u]-lambda[u]} of Step~2\,, $ u(z)\bar{u}\left(\frac{1}{z}\right)=z\p_z\log\uppsi_u-\nu_u$ where one computes by \eqref{Blaschke m0}\,,
\[
    z\p_z\log \uppsi_u=m_0+\sum_{j=1}^rm_j\left(\frac{1}{1-p_jz}+\frac{\pjbar}{z-\pjbar}\right)\,.
\]
That is, for all $m_0\in\llbracket0, N-1\rrbracket\,,$
\[
    \frac{P_{N-m_0}(z)}{\prod_{j=1}^r(1-p_jz)^{m_j}}\frac{z^{N-m_0}\,\overline{P_{N-m_0}}(\frac{1}{z})}{\prod_{j=1}^r(z-\pjbar)^{m_j}}
    =
    m_0-\nu_u+\sum_{j=1}^rm_j\left(\frac{1}{1-p_jz}+\frac{\pjbar}{z-\pjbar}\right)\,,
\]
where  $ p_j\in\D^*\,$, $p_k\neq p_j\,$ for $k\neq j\,$.
Observe that in the right--hand side, $\frac{1}{p_j}$ is a pole of multiplicity  one. Then, the same should hold for the left-hand side as well. Therefore, 
if $m_j\geq 2\,,$  $j= 1,\ldots r\,,$ this implies that  $\frac{1}{p_j}$ in the left--hand side must be a root of multiplicity $(m_j-1)$ of  $\overline{P_{N-m_0}}(\frac{1}{z})$\,. That is,
\[
    P_{N-m_0}(\pjbar)=0\ , \ \ldots \ ,\ P^{\,(m_j-2)}_{N-m_0}(\pjbar)=0\,,
\]
where $P^{(m)}_{N-m_0}$ is the $m^{\text{th}}$ derivative of $P_{N-m_0}$\,. As a result, $(z-\pjbar)^{m_j-1}$ divides $P_{N-m_0}(z)$\,, and so 
\[
    u(z)=z^{m_0}\prod_{j=1}^r\left(\frac{z-\pjbar}{1-p_jz}\right)^{m_j-1}\frac{q(z)}{\prod_{j=1}^r(1-p_jz)}\,, \qquad p_j\in\D^*\,,\ p_k\neq p_j\,,\ k\neq j\,,
\]
with $m_0\in\llbracket0,N-1\rrbracket\,,\,$ $m_1, \ldots,m_r\in\llbracket 1, N\rrbracket$\,,  
$
    m_0+\sum_{j=1}^rm_j=N\,,
$ 
and such that  $\deg(q)=r$ if $m_0\neq 0$  thanks to \ref{deg P=N}\,.

\vskip0.25cm
\noindent
\textbf{Step~4.} In this step, we write the rational function $u$ obtained in Step~3 on its  partial fractional decomposition 
\[
    u(z)=z^{m_0}\prod_{j=1}^r\left(\frac{z-\pjbar}{1-p_jz}\right)^{m_j-1}\left(a+\sum_{j=1}^r\frac{c_j}{1-p_jz}\right)\,, \quad p_j\in\D^*\,,\ p_k\neq p_j\,,\ k\neq j\,,
\] 
where  $a\neq 0$ if $m_0\neq 0$\,, and 
we infer  by  \eqref{|u|2=zpzlog uppsi[u]-lambda[u]} of Step~2 that, for all $j=1\,,\ldots\,,r\,,$
\be\label{cond a, ck-foc, preuve}
    \overline{a}c_{j}\,+\,\sum_{k=1}^{r}\,\frac{\overline{c_k}\,c_{j}}{1-\overline{p_k}\,p_j}=m_j\,. 
\ee
Indeed, by applying $\Pi$ to \eqref{|u|2=zpzlog uppsi[u]-lambda[u]}\,,
\[
    \Pi(\va{u}^2)=\Pi(z\p_z\log\uppsi_u -\nu_u)\,,
\]
Observe, on the one hand,
\[
    \Pi(z\p_z\log\uppsi_u -\nu_u)=\sum_{j=1}^r \,\frac{m_j}{1-p_jz}+m_0-\nu_u\,.
\]
And on the other hand,
\begin{align*}
    \Pi(\va{u}^2)=&\,\Pi\left(a+\sum_{j=1}^r\frac{c_j}{1-p_jz}\right)
    \\
    =&\,\va{a}^2+\sum_{j=1}^r\bar{a}c_j+\,\bar{a}\sum_{j=1}^r \frac{c_j}{1-p_jz}+\sum_{j=1}^r\sum_{k=1}^r\frac{c_j\overline{c_k}}{(1-p_j\overline{p_k})(1-p_jz)}\,.
\end{align*}
Therefore, for all $j=1,\ldots,r\,,$ the request conditions~\eqref{cond a, ck-foc, preuve} and 
$$
    \nu_u=m_0-\va{a}^2-\sum_{j=1}^r\bar{a}c_j\,.
$$

\vskip0.25cm
\noindent
\textbf{Step~5.}
We prove the converse. For $N\in\N$\,, let $u=Cz^N,$ $C\in\C^*,$ or 
    \[
    u(z)=z^{m_0}\prod_{j=1}^r\left(\frac{z-\pjbar}{1-p_jz}\right)^{m_j-1}\left(a+\sum_{j=1}^r\frac{c_j}{1-p_jz}\right)\,, \quad p_j\in\D^*\,,\ p_k\neq p_j\,,\ k\neq j\,,
    \]
    where  $m_0\in\llbracket0,N-1\rrbracket\,,\,$ $m_1, \ldots,m_r\in\llbracket 1, N\rrbracket$\,, such that $
    m_0+\sum_{j=1}^rm_j=N\,,
$  and
   $\ (a,c_1,\ldots c_r)\in \C\times \C^{r}\,,$ satisfy
\be\label{cond a, ck-foc, preuve Step 5}
\overline{a}c_{j}\,+\,\sum_{k=1}^{r}\,\frac{\overline{c_k}\,c_{j}}{1-\overline{p_k}\,p_j}=m_j\,,
\ee
with $a\neq0$ if $m\neq 0\,.$
Our aim is to prove that $u\in\U_N$\,, that is
\begin{itemize}
    \item $\e\, \uppsi\in\mathscr{B}_N$ such that $L_u\, S^k \uppsi=(\mu+k)S^k \uppsi$ for all $k\in\Nzero\,,$  where $\mu$ is a real constant.
    \item $\uppsi$ is of minimal degree, i.e. there does not exist  $\upphi\in\mathscr{B}_\ell$  with $\ell<N\,,$  such that $\upphi$ satisfies $L_u\, S^k \upphi=(\mu_1+k)S^k \upphi$ for all $k\in\Nzero\,.$ 
\end{itemize}
\vskip0.2cm
\noindent
For the moment, let us deal with the more complicated case, i.e. $u$ is a rational function.
We start by  proving the first point. Let
\be\label{uppsi}
    \uppsi:=\eee^{i\theta}z^{m_0}\prod_{j=1}^{r}\left(\frac{z-\pjbar}{1-p_jz}\right)^{m_j}\in\mathscr{B}_N\,, \quad \theta \in\R\,, \ p_k\in\D^*\,.
\ee
Observe that 
$\bar{u}\uppsi$ extends as a holomorphic function on $\D$ as $p_k\in\D\,.$ Then, by definition of $L_u\,,$
\[
    L_u\uppsi=z\p_z\uppsi-\va{u}^2\uppsi\,,
\]
where 
\[
    z\p_z\uppsi=m_0\uppsi+\sum_{k=1}^r\left(\frac{\pkbar}{z-\pkbar}+\frac{1}{1-p_kz}\right)\uppsi\,,
\]
and thanks to \eqref{cond a, ck-foc, preuve Step 5}\,, 
\begin{align}\label{u L2 pour u in U[N]}
    \va{u}^2
    =&\,\va{a}^2+\overline{a}\sum_{k=1}^r\frac{c_k}{1-p_kz}+a\sum_{k=1}^r\frac{\overline{c_k}\, z}{z-\pkbar}+\sum_{k=1}^r\sum_{j=1}^r\frac{c_k\overline{c_j}\, z}{(1-p_kz)(z-\overline{p_j})}\notag
    \\
    =&\,\va{a}^2+a\sum_{k=1}^r\,\overline{c_k}\,+\,\sum_{k=1}^r\left(\overline{a}c_k+\sum_{j=1}^r\frac{c_k\overline{c_j}}{1-p_k\overline{p_j}}\right)\frac{1}{1-p_kz}\notag
    \\
    &\hskip2.8cm+\,\sum_{j=1}^r\left(\overline{a}c_j+\sum_{k=1}^r\frac{c_k\overline{c_j}}{1-p_k\overline{p_j}}\right)\frac{\overline{p_j}}{z-\overline{p_j}}\notag
    \\
    =&\,\va{a}^2+a\sum_{k=1}^r\,\overline{c_k}\,+\,\sum_{k=1}^r\frac{m_k}{1-p_kz}+\sum_{j=1}^r \frac{m_j\,\overline{p_j}}{z-\overline{p_j}}\,.
\end{align}
Therefore,
\[
    L_u\uppsi=\Big(m_0-\va{a}^2-a\sum_{k=1}^{r}\,\overline{c_k}\Big) \uppsi
\]
Additionally, observe that 
for all $k\in\N\,,$ $\ps{S^k\uppsi}{u}=0\,.$
Hence, by applying the commutator identity \eqref{commutateur [Lu,s]}\,, we deduce 
\be\label{Lu Sk uppsi=(mu+k) Sk uppsi}
    L_u\, S^k \uppsi=(\mu+k)S^k \uppsi\,, \qquad \qlq k\in\Nzero\,,
\ee
where 
 $\mu:=m_0-\va{a}^2-a\sum_{k=1}^{r}\,\overline{c_k}\,.$ 

\vskip0.2cm
\noindent
It remains to prove that $\uppsi$ is of degree minimal. Suppose for the seek of contradiction that there exists $\upphi\in\mathscr{B}_\ell$\,, with  $\ell<N\,,$ such that $L_u\, S^{j} \upphi=(\mu_1+k)S^{j} \upphi$ for all $j\in\Nzero\,.$ By comparing the latter identity to \eqref{Lu Sk uppsi=(mu+k) Sk uppsi}, and thanks to \eqref{liminf} we infer that there exists $k'\,, \,k\in\N$ such that 
$S^k\uppsi=S^{k'}\upphi\,,$ i.e.
\[
     \upphi:=\eee^{i\tilde{\theta}}z^{m_0+k-k'}\prod_{j=1}^{r}\left(\frac{z-\pjbar}{1-p_jz}\right)^{m_j}\,, \quad  m_0+k-k'+\sum_{j=1}^rm_j=\ell<N.
\]
Therefore, by repeating Step~1 to Step~4, we infer that $u$ must be of the form
\[
    u(z)=z^{m_0+k-k'}\prod_{j=1}^r\left(\frac{z-\pjbar}{1-p_jz}\right)^{m_j-1}\left(a+\sum_{j=1}^r\frac{c_j}{1-p_jz}\right)\,, \
\]
which is a contradiction.
\end{proof}

\begin{corollary}\label{L2 norm of a finite gap} 
    Given $N\in \N$\,, let $u\in\U_N\,.$ Then,
    \begin{enumerate}[label=(\roman*),itemsep=2pt]
    \item In the focusing case, $\|u\|_{L^2}^2=N-\nu_u\,,$
    \item In the defocusing case,
        $\|u\|_{L^2}^2=\la_u-N\,,$
\end{enumerate}
where $\nu_u$ is the eigenvalue introduced in \eqref{LuSk uppsi[u]} and $\la_u$ is the corresponding one in the defocusing case.
\end{corollary}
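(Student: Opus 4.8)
The plan is to integrate the pointwise identity established in Step~2 of the proof of Theorem~\ref{finite gap potential th characterization Un} over the unit circle. Recall that for $u\in\U_N$, equation~\eqref{|u|2=zpzlog uppsi[u]-lambda[u]} asserts that on $\p\D$ one has $\va{u}^2=z\p_z\log\uppsi_u-\nu_u$ in the focusing case, where $\uppsi_u\in\mathscr{B}_N$ is the degree-$N$ Blaschke product attached to $u$ and $\nu_u$ is the eigenvalue from~\eqref{LuSk uppsi[u]}. Since any element of the Hardy space is identified with its boundary trace in $L^2$, I would write $\|u\|_{L^2}^2=\intc\va{u(z)}^2\,\dz$ and substitute this identity directly.

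This splits the computation into two pieces. The constant term is immediate: $\intc\nu_u\,\dz=\nu_u\,\ps{1}{1}=\nu_u$. For the logarithmic-derivative term, the factor $z$ cancels against the $z$ in $\dz$, leaving $\frac{1}{2\pi i}\int_{z\in\mathscr{C}(0,1)}\frac{\uppsi_u'(z)}{\uppsi_u(z)}\,dz$. By the argument principle this integral counts the zeros minus the poles of $\uppsi_u$ inside $\D$, with multiplicity. Using the representation $\uppsi_u(z)=\eee^{i\theta}z^N\bar{Q}(1/z)/Q(z)$ with $Q(z)=\prod_{j=1}^N(1-p_jz)$, the $N$ zeros $\overline{p_j}$ all lie in $\D$ while the poles $1/p_j$ all lie outside $\overline{\D}$, so the integral equals $N$. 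Combining the two pieces yields $\|u\|_{L^2}^2=N-\nu_u$, which is assertion~(i).

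For the defocusing case~(ii), the same scheme applies after replacing $\Lu$ by $\Lutilde$. The eigenvalue relation $\Lutilde\uppsi_u=\la_u\uppsi_u$, combined with the fact that $\bar u\,\uppsi_u$ extends holomorphically to $\D$ (so that $\Pi(\bar u\,\uppsi_u)=\bar u\,\uppsi_u$ on $\p\D$), gives the sign-reversed identity $\va{u}^2=\la_u-z\p_z\log\uppsi_u$; integrating exactly as before produces $\|u\|_{L^2}^2=\la_u-N$.

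I do not anticipate a genuine obstacle here, since the whole analytic content is already packaged in Step~2 of Theorem~\ref{finite gap potential th characterization Un}; the only point requiring slight care is recognizing the boundary integral of the logarithmic derivative as the winding number of $\uppsi_u$, equivalently the degree of the Blaschke product, which is exactly $N$ because $u\in\U_N$ forces $\deg\uppsi_u=N$ by the minimality in the definition~\eqref{N(u)} of $\mathcal{N}(u)$.
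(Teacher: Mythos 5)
Your proof is correct, and it takes a genuinely different route from the paper's. The paper proceeds by cases through the structure theorem: for $u=C\eee^{iNx}$ it invokes the explicit eigenvalue formula of Remark~\ref{rq valeur propre finite gap}, and for the rational potentials it integrates the explicit partial-fraction expansion \eqref{u L2 pour u in U[N]} of $\va{u}^2$ term by term (the terms $\frac{m_k}{1-p_kz}$ contribute $m_k$ each, the terms $\frac{m_j\overline{p_j}}{z-\overline{p_j}}$ have zero mean), then substitutes the formula $\nu_u=m_0-\va{a}^2-\sum_j a\,\overline{c_j}$. You instead integrate the abstract identity $\va{u}^2=z\p_z\log\uppsi_u-\nu_u$ of Step~2 directly against $\dz$ and recognize $\frac{1}{2\pi i}\oint \uppsi_u'/\uppsi_u\,dz$ as the winding number of the Blaschke product, which equals its degree $N$ by the argument principle ($\uppsi_u$ is meromorphic near $\overline{\D}$ with all $N$ zeros $\overline{p_j}$ inside, all poles $1/p_j$ outside, and no zeros or poles on $\p\D$ since $\va{\uppsi_u}=1$ there). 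This buys two things: you never need the explicit rational form of $u$, the conditions \eqref{cond a, ck-foc}, or the eigenvalue formula of Remark~\ref{rq valeur propre finite gap}, and the trivial case $u=C\eee^{iNx}$ (where $\uppsi_u=\eee^{i\theta}z^N$ and $z\p_z\log\uppsi_u=N$) is absorbed uniformly rather than handled separately. The computational content ultimately overlaps — the paper's mean of $z\p_z\log\uppsi_u=m_0+\sum_j m_j\big(\frac{1}{1-p_jz}+\frac{\overline{p_j}}{z-\overline{p_j}}\big)$ is $m_0+\sum_j m_j=N$, which is exactly your residue count — but your packaging via the argument principle is cleaner and makes transparent why the answer is the degree $N$; the minimality in \eqref{N(u)} indeed guarantees $\deg\uppsi_u=N$, and your sign-reversed identity $\va{u}^2=\la_u-z\p_z\log\uppsi_u$ for $\Lutilde=D+T_uT_{\bar u}$ handles the defocusing case correctly.
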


\begin{Rq}
    Based on the previous  statement, one can conclude that for any potential $u\in\U_N$, we have
    \be\label{parametre nu}
    \begin{cases}
        \nu_u<N \text{ (focusing case) }
        \\
        \la_u>N \text{ (defocusing case) }
    \end{cases}\,.
    \ee
    \end{Rq}
    \begin{proof}
    Let $u\in\U_N\,.$ Then in light of the previous theorem, we have either $u=C\eee^{iNx}\,,$ $C\in\C^*$ or $u$ is the rational function~\eqref{u rational function th characterization of finite gap}\,. Thus, if $u=C\eee^{iNx}$ then the results follow easily by Remark~\ref{rq valeur propre finite gap}. Now, if $u$ is the rational function ~\eqref{u rational function th characterization of finite gap}\,, then
        by computing the $L^2$--norm of $u$ in the focusing case, we infer via \eqref{u L2 pour u in U[N]}\,,
    \begin{align*}
        \|u\|_{L^2}^2
        =&\,\intc\left(\va{a}^2+a\sum_{k=1}^{r}\,\overline{c_k}\,+\,\sum_{k=1}^{r}\frac{1}{1-p_kz}+\sum_{k=1}^{r} \frac{\overline{p_k}}{z-\overline{p_k}}\right)\dz
        \\
        =& \,\va{a}^2+a\sum_{k=1}^{r}\,\overline{c_k}+N-m\,,
    \end{align*}
    which is equal to $-\nu_u+N$ by \ref{nu[u]} of Remark~\ref{rq valeur propre finite gap}\,. For the defocusing case, we shall have
    \begin{align*}
        \|u\|_{L^2}^2
        =&\,\intc\left(\va{a}^2+a\sum_{k=1}^{r}\,\overline{c_k}\,-\,\sum_{k=1}^{r}\frac{1}{1-p_kz}-\sum_{k=1}^{r} \frac{\overline{p_k}}{z-\overline{p_k}}\right)\dz
        \\
        =&\, \va{a}^2+a\sum_{k=1}^{r}\,\overline{c_k}-N+m\,,
    \end{align*}
     which is equal to $\la_u-N$ by \ref{lambda[u]} of Remark~\ref{rq valeur propre finite gap}\,.
     \end{proof}

\begin{prop}\label{U[N] invariant by the evolution}
    For any $N\in \Nzero$\,, the set of finite gap potential  $\mathcal{U}_N$ is conserved along the flow of the \eqref{CS-DNLS}--equation.
\end{prop}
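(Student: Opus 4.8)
The plan is to exploit the isospectral conjugation $\Luzero=U(t)^{-1}\Lut U(t)$ of \eqref{isospectral property} and to prove that the unitary propagator $U(t)$ (unitary because $\But$ is skew-adjoint) transports an eigen-tower of $\Luzero$ into an eigen-tower of $\Lut$ \emph{of the same degree}. Fix $u_0\in\U_N$ and let $\uppsi_{u_0}\in\mathscr{B}_N$ be the minimal Blaschke product of \eqref{N(u)}, so that $\Luzero S^k\uppsi_{u_0}=(\nu_{u_0}+k)S^k\uppsi_{u_0}$ for all $k\in\Nzero$. Since $\Lut U(t)=U(t)\Luzero$, each $U(t)S^k\uppsi_{u_0}$ is already an eigenfunction of $\Lut$ for the eigenvalue $\nu_{u_0}+k$. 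The crux is to show that these transported eigenfunctions are precisely the shifts of a single Blaschke product, namely that $U(t)S^k\uppsi_{u_0}=S^k\bigl(U(t)\uppsi_{u_0}\bigr)$, even though $U(t)$ does not commute with $S$ in general.

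The key computation is the evolution of $A(t):=U(t)^*S\,U(t)$. Differentiating and using $U'(t)=\But U(t)$ together with the commutator identity \eqref{commutateur [Bu,s]} in the adjoint form $[S,\But]=-i\bigl(\Lut^2S-S(\Lut+\Id)^2\bigr)$, and then pushing $U(t)$ through $\Lut=U(t)\Luzero U(t)^*$, one is led to the linear Heisenberg-type equation
\[
\frac{d}{dt}A(t)=i\bigl(A(t)(\Luzero+\Id)^2-\Luzero^2A(t)\bigr),\qquad A(0)=S,\qquad\text{whose solution is}\qquad A(t)=\eee^{-it\Luzero^2}\,S\,\eee^{it(\Luzero+\Id)^2}.
\]
Applying this to the eigenvector $S^k\uppsi_{u_0}$ of $\Luzero$ (eigenvalue $\nu_{u_0}+k$), the two exponential phases are $\eee^{it(\nu_{u_0}+k+1)^2}$ and $\eee^{-it(\nu_{u_0}+k+1)^2}$ and cancel exactly, leaving $A(t)S^k\uppsi_{u_0}=S^{k+1}\uppsi_{u_0}$. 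Equivalently $S\,U(t)S^k\uppsi_{u_0}=U(t)S^{k+1}\uppsi_{u_0}$, and an immediate induction gives $U(t)S^k\uppsi_{u_0}=S^k\uppsi(t)$ with $\uppsi(t):=U(t)\uppsi_{u_0}$.

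From here the conclusion is short. The family $\{S^k\uppsi(t)\}_{k\geq0}$ is the unitary image of the orthonormal family $\{S^k\uppsi_{u_0}\}$, hence orthonormal, so $\ps{S^k\uppsi(t)}{\uppsi(t)}=\widehat{|\uppsi(t)|^2}(-k)=\delta_{k0}$; thus $|\uppsi(t)|=1$ on $\T$ and $\uppsi(t)$ is inner, with $U(t)(\uppsi_{u_0}\Ltwo)=\uppsi(t)\Ltwo$. Its model space $(\uppsi(t)\Ltwo)^\perp=U(t)\bigl((\uppsi_{u_0}\Ltwo)^\perp\bigr)$ has dimension $N$ (preserved by the unitary $U(t)$), so by the regularity $u(t)\in H^1_+$ and Lemma~\ref{produit_de_Blaschke}, exactly as in Proposition~\ref{produit-de-Blashke}, $\uppsi(t)\in\mathscr{B}_N$. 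Moreover $\Lut S^k\uppsi(t)=U(t)\Luzero S^k\uppsi_{u_0}=(\nu_{u_0}+k)S^k\uppsi(t)$, so $\uppsi(t)$ is a genuine eigen-tower for $\Lut$ of degree $N$; in particular $u(t)$ is a finite gap potential with $\mathcal{N}(u(t))\leq N$. Running the \emph{same} transport along the reversed flow (which carries $u(t)$ back to $u_0$) applied to a \emph{minimal} eigen-tower of $u(t)$ produces an eigen-tower for $u_0$ of degree $\mathcal{N}(u(t))$, whence $N=\mathcal{N}(u_0)\leq\mathcal{N}(u(t))$. Therefore $\mathcal{N}(u(t))=N$, i.e.\ $u(t)\in\U_N$. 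The defocusing case is identical, with $(\Lu,\Bu)$ replaced by $(\Lutilde,\Butilde)$ and using the second line of \eqref{commutateur [Bu,s]}.

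The main obstacle is the operator computation of $A(t)=U(t)^*S\,U(t)$ and the ensuing cancellation of phases: everything rests on the quadratic dispersion $\pm i\Lu^2$ in the commutator \eqref{commutateur [Bu,s]} matching the quadratic time factors generated by the flow, which is exactly what forces $U(t)$ to commute with $S$ along the tower $\{S^k\uppsi_{u_0}\}$. Once this intertwining $U(t)S^k\uppsi_{u_0}=S^k\uppsi(t)$ is established, the invariance of the degree $N$ (and hence of $\U_N$) follows from Beurling-type rigidity and the reversibility of the flow.
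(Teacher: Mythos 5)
Your proposal is correct, and its backbone is the same as the paper's: transport the eigen-tower $\{S^k\uppsi_{u_0}\}$ by the propagator generated by $B_{u(t)}$, exploit the quadratic phase cancellation coming from the commutator identity \eqref{commutateur [Bu,s]} to see that the tower structure survives, deduce from the vanishing pairings $\ps{S^k\uppsi(t)}{\uppsi(t)}=0$ that $\uppsi(t)$ is inner (hence a finite Blaschke product, via the same continuity argument as in Proposition~\ref{produit-de-Blashke}), and obtain minimality of the degree by running the flow backwards — this last step is exactly the paper's contradiction argument. You differ in two local implementations, both legitimate. First, where the paper verifies inductively that each $S^k\varrho(t)$ solves the transport Cauchy problem (using the explicit evolution laws of Lemma~\ref{evolution in the gnt-foc} to kill the term $\ps{S\varrho(t)}{u(t)}$), you package the same cancellation once and for all into the Heisenberg-type equation for $A(t)=U(t)^*S\,U(t)$ with closed-form solution $\eee^{-it\Luzero^2}S\,\eee^{it(\Luzero+\Id)^2}$, from which the intertwining $U(t)S^k\uppsi_{u_0}=S^kU(t)\uppsi_{u_0}$ falls out on eigenvectors; this is slicker, though you should note that the operator ODE involves the unbounded $\Luzero^2$ and is rigorous only when tested on the eigenvectors you actually use (the same level of formality as the paper's manipulations). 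Second, for preservation of the degree $N$, the paper computes $M_{\leq N}(u(t))=\mrm{Diag}(\eee^{-i(\nu_n+1)^2t})\,M_{\leq N}(u_0)\,\mrm{Diag}(\eee^{-i\nu_n^2t})$ via the inverse spectral formula and compares determinants, whereas you observe that $U(t)$ maps $\uppsi_{u_0}\Ltwo$ unitarily onto $\uppsi(t)\Ltwo$, so the model spaces have equal dimension and $\deg\uppsi(t)=N$ by \cite[Corollary~5.18]{GMR16}; your route avoids the matrix computation entirely, at the cost of not exhibiting the explicit time evolution of $M_{\leq N}$, which the paper's version provides as a by-product.
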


\begin{proof}
    Let $u_0$ be a finite gap potential in $\U_N\,,$ that is there exists $\uppsi_{u_0}\in\mathscr{B}_N$ of minimal degree $N$ satisfying 
    \be\label{Lu0 Sk uppsi=}
        L_{u_0}\,S^k\uppsi_{u_0}=(\nu_{u_0}+k)S^k\uppsi_{u_0}\,, \qquad \qlq k\geq 0\,.
    \ee
    Our aim is to prove that there exists $\varrho(t)\in\mathscr{B}_N$ of minimal degree~\footnote{In the sense, that there does not exist  $\upphi(t)\in\mathscr{B}_\ell$  with $\ell<N\,,$  such that $\upphi(t)$ satisfies $L_u\, S^k \upphi(t)=(\mu_1+k)S^k \upphi(t)$ for all $k\in\Nzero\,.$ } such that 
    \[
        L_{u(t)}\, S^k\varrho(t)=(\nu_{u_0}+k) S^k \varrho(t)\,, \qquad \qlq k\geq 0\,.
    \]
    Let $\varrho(t)$ be a solution of the Cauchy problem
     $$
        \begin{cases}\p_t\,\varrho(t) =B_{u(t)} \,\varrho(t)
        \\
        \varrho(0) =\displaystyle \uppsi_{u_0}
        \end{cases}\,.
    $$
    Hence, by Remark~\ref{Rq gnt vect. propre+ evolution<u|fn>}\,, 
    \be\label{Lu rho=}
        L_{u(t)}\, \varrho(t)=\nu_{u_0}\, \varrho(t)\,.
    \ee 
    In addition, recall by Lemma~\ref{evolution in the gnt-foc}\,,
    \[
        \ps{S\varrho(t)}{u(t)}\,=\ps{S\uppsi_0}{u_0}\eee^{-i\nu_{u_0}^2t}\,,
    \]
    where here $\ps{S\uppsi_0}{u_0}$ vanishes after combining the commutator identity~\eqref{commutateur [Lu,s]} and equation~\eqref{Lu0 Sk uppsi=}\,. Therefore, by \eqref{commutateur [Lu,s]}\,,
    \be\label{Lu S rho}
        L_{u(t)}\, S\varrho(t)=(\nu_{u_0}+1)S\rho(t)\,.
    \ee
    This yields to
    $$
        \begin{cases}\p_t\,S\varrho(t) =B_{u(t)} \,S\varrho(t)
        \\
        S\varrho(0) =\displaystyle S\uppsi_{u_0}
        \end{cases}\,.
    $$
Indeed, by the commutator identity \eqref{commutateur [Bu,s]}\,,
\begin{align*}
    \p_t\,S\varrho(t)=
    &\,S B_{u(t)}\varrho(t)
    \\
    =&\,B_{u(t)}S\varrho(t)-i\Big( \Lutilde^2 S \,-\, S(\Lutilde+\Id)^2 \Big)\varrho(t)\,,
\end{align*}
which is equal to $\p_t\,S\varrho(t)=B_{u(t)}S\varrho(t)$ thanks to \eqref{Lu rho=} and \eqref{Lu S rho}\,.
Consequently, by repeating the same procedure, we obtain for all $k\in\Nzero\,,$
\[
    L_{u(t)}\, S^k\varrho(t)=(\nu_{u_0}+k) S^k \varrho(t)\,, 
\]
with 
$$
        \begin{cases}\p_t\,S^k\varrho(t) =B_{u(t)} \,S^k\varrho(t)
        \\
        S^k\varrho(0) =\displaystyle S^k\uppsi_{u_0}
        \end{cases}\,.
    $$
Besides, observe that $\varrho(t)\in \mathscr{B}_N$\,. Indeed, by applying  Lemma~\ref{evolution in the gnt-foc}\,,
    $$
        \ps{S^k\varrho(t)}{\varrho(t)}=\ps{S^k\uppsi_{u_0}}{\uppsi_{u_0}} \,\eee^{i((\nu_{u_0}+k)^2-\nu_{u_0}^2)\,t}\,=0\,,\quad \qlq k\in\N\,,
    $$
    leading to
    $$
        \ps{\eee^{ikx}}{\va{\varrho(t)}^2}=0\,,\quad k\in \Z\bk\lracc{0}.
    $$
    Thus, following the same lines of  the proof of Proposition~\ref{produit-de-Blashke}, we deduce that $\varrho(t)$ is a finite Blaschke product.
     To infer that the degree of this finite Blaschke product is $N$\,, we should notice that each of $\uppsi_{u_0}$ and $\varrho(t)$ enjoys an inverse spectral formula (Remark~\ref{formule d'inversion f_n})
    \begin{gather*}
        \uppsi_{u_0}=\ps{(\Id-zM_{\leq N}(u_0))^{-1} \mathds{1}_N}{Y_{\leq N}(u_0)}\;,
            \\
        \varrho(t)=\ps{(\Id-zM_{\leq N}(u(t)))^{-1} \mathds{1}_N}{Y_{\leq N}(u(t))} \,.
    \end{gather*}
    where $M_{\leq N}(u_0)$ and  $M_{\leq N}(u(t))$  are  the finite matrix of order $(N+1)\times (N+1)$ obtained respectively from the representation matrix of $S^*$ in the $L^2$ basis $(h_k)_{k=0}^{N-1}\cup (S^k\uppsi_{u_0})_{k\geq 0}$ constituted of the eigenfunctions of $L_{u_0}$ at $t=0$\,, and from the eigenfunctions $(e_k(t))_{k=0}^{N-1}\cup (S^k\varrho(t))_{k\geq 0}$ of $L_{u(t)}$ at any time $t$\,. Therefore, in view of the fourth identity  of Lemma~\ref{evolution in the gnt-foc}\,, we infer
    $$
       M_{\leq N}(u(t))=\mrm{Diag}(\eee^{-i(\nu_n+1)^2t})\,M_{\leq N}(u_0)\,\mrm{Diag}(\eee^{-i\nu_n^2t})\,. 
    $$
    That is, 
    $$
        \big\lvert\det\big(M_{\leq N}(u_0)\big)\big\rvert=\big\lvert\det\big(M_{\leq N}(u(t))\big)\big\rvert
    $$
    and so, 
    \be\label{deg}
        \deg\big(\det(\Id-zM_{\leq N}(u(t)))\big)=\deg\big(\det(\Id-zM_{\leq N}(u_0))\big)=N\,.
    \ee
    As a result, $u(t)\in \U_n$ with $n\leq N$\,.
    It remains to show that $u(t)\notin \U_n$ with $n<N\,.$ Suppose that there exists $\upphi(t)\in \mathscr{B}_n$ with $n<N$ such that 
    $$
    L_{u(t)}\, S^k\upphi(t)=(\nu_{u}+k) S^k \upphi(t)\,,
    $$
    then applying the same above procedure, we infer that  $\upphi(0)\in \mathscr{B}_n$ with $n<N$ and 
    $$
    L_{u_0}\, S^k\upphi(0)=(\la_{u}+k) S^k \upphi(0)\,,
    $$
    leading to $u_0\in \U_n$\,, $n<N$ which is a contradiction. 
     \vskip0.2cm
    Note that the same proof works in the defocusing case.
    
\end{proof}


\section{\textbf{Remark on the regularity of \texorpdfstring{$u$}{u}}}\label{Rk on the regularity}

Recall that in the beginning of Section~\ref{spectral property of the Lax operator}\,, we have supposed for more convenience that $u$ is a function with enough regularity, typically in $\Htwo\,.$ However, the same strategy adopted to derive the traveling waves of the Calogero--Sutherland DNLS equation \eqref{CS-DNLS} and to characterize the finite gap potentials can be extended to less regularity spaces. \textit{ In this section,
 we discuss some remarks that allow the extension of the main results to the critical regularity $\Ltwo\,$.}

\vskip0.25cm
First, we recall from \cite{Ba23} the following Theorem.

\begin{theorem*}[\cite{Ba23}]
 For any $0\leq s\leq 2$, let $u_0\in H^s_+(\T)$\,. Then,
    there exists a unique potential $u\in\mathcal C(\R, H^s_+(\T))$ solution of \eqref{CS-defocusing} such that,
     for any sequence $(u_0^\varepsilon)\subseteq H^2_+(\T)\,,$
    $$
        \|u_0^\eps-u_0\|_{H^s}\underset{\eps\to0}{\longrightarrow} 0\,,
    $$
     we have for all $T>0\,,$
     $$
        \sup_{t\in[-T,T]}\|u^\eps(t)-u(t)\|_{H^s}\to 0\,, \quad \eps\to 0\,.
    $$
      Moreover, the $L^2$--norm of the limit potential $u$ is conserved in time :
     \bes
        \|u(t)\|_{L^2}=\|u_0\|_{L^2}\,,\quad \qlq t\in \R.
    \ees
    Furthermore, the same holds for \eqref{CS-focusing} under the additional condition $\|u_0\|_{L^2}<1$\,.
\end{theorem*}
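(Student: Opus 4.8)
The plan is to read this as an extension-of-the-flow statement: promote the known high-regularity global well-posedness down to $H^s_+(\T)$, $0\le s\le 2$ (including the critical space $\Ltwo$ at $s=0$), by means of the \emph{explicit formula} furnished by the Lax structure, rather than by a fixed-point argument. First I would take as starting point the local well-posedness in $H^\sigma_+(\T)$ for $\sigma>\tfrac32$ (following \cite{deMP10,GL22}), upgraded to a global flow $\Phi_t$ on $\Htwo$ using the conserved quantities coming from the isospectrality \eqref{isospectral property}; at this regularity the evolving basis $(\gnt)$ of Definition~\ref{orthonormal Basis gnt} and the matrix identities of Lemma~\ref{evolution in the gnt} and \eqref{evolution <u|gn>} are all legitimate.

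The integrable input is then the inversion formula of Remark~\ref{formule explicite en terme matricielle}, $u(t,z)=\ps{(\Id-zM(t))^{-1}X(t)}{Y(t)}$, combined with the \emph{linear} time evolution of its entries: by \eqref{evolution <u|gn>} and Lemma~\ref{evolution in the gnt} one has $X(t)=\mrm{Diag}(\eee^{-i\la_n^2t})X_0$, $Y(t)=\mrm{Diag}(\eee^{-i\la_n^2t})Y_0$ and $M(t)=\mrm{Diag}(\eee^{-i(\la_p+1)^2t})\,M_0\,\mrm{Diag}(\eee^{i\la_n^2t})$. Re-expressing the diagonal factors as functions of $\Luzerotilde$ acting on the fixed vectors $u_0,\,1$ and on the shift $S$, this presents $u(t)=\Phi_t(u_0)$ as an explicit map depending on $u_0$ only through the spectral resolution of $\Luzerotilde$, hence a candidate that makes sense well below $\Htwo$.

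Next I would establish the analytic core: that $\Phi_t$ extends to a map on $H^s_+(\T)$ which is continuous in $u_0$ uniformly for $t\in[-T,T]$. This requires uniform bounds, in $z\in\overline{\D}$ and $t\in[-T,T]$, on the resolvent $(\Id-zM(t))^{-1}$ together with the Lipschitz dependence of the spectral data $(\la_n,\fnuzero)$ on $u_0$ in the $H^s$ topology; conservation of the $L^2$ norm (proved at the $\Htwo$ level by a direct integration by parts on \eqref{CS-defocusing}, the contribution of $2D\Pi(|u|^2)u$ tested against $\bar u$ having vanishing real part) keeps all these quantities in a fixed bounded set and lets one pass to the limit. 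Granting this, for $u_0\in H^s_+$ and any $u_0^\eps\to u_0$ in $H^s$ the solutions $u^\eps=\Phi_\cdot(u_0^\eps)$ form a Cauchy sequence in $\mathcal C([-T,T],H^s_+)$, whose limit $u$ solves \eqref{CS-defocusing} in the distributional sense and is independent of the approximating sequence; this simultaneously defines the extended flow, yields the stated stability, and gives uniqueness.

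For the focusing equation \eqref{CS-focusing} the only change is that $\Lu=D-T_uT_{\bar u}$ may have colliding low eigenvalues, so the explicit formula can degenerate. The hypothesis $\|u_0\|_{L^2}<1$ enters precisely through inequality \eqref{simplicite u L2 <1}: writing $\|u_0\|_{L^2}^2<1-\alpha$ gives the uniform gap $\nu_{n+1}>\nu_n+\alpha$, and since $L^2$ conservation preserves $\|u(t)\|_{L^2}=\|u_0\|_{L^2}<1$, this gap persists along the whole flow, which is exactly what keeps the resolvent controlled and the construction valid. The hard part will be the uniform-in-time resolvent and continuity estimates of the previous paragraph at the endpoint $s=0$: showing that the infinite-matrix inversion formula depends continuously on $u_0$ in the $\Ltwo$ topology, uniformly on compact time intervals, is the delicate point, and it is where the quantitative spectral-gap information of Proposition~\ref{multiplicite val propres} is indispensable.
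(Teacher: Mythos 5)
First, a point of comparison: the paper itself contains no proof of this statement --- it is quoted verbatim from \cite{Ba23}, and the only methodological indication given here is that the extension of the flow to the critical space $\Ltwo$ in \cite{Ba23} rests on the \emph{explicit formula} for the solution (\cite[Proposition 2.5]{Ba23}). Measured against that, your overall route is the right one: solve globally at $H^2$ regularity, rewrite the solution through the inversion formula of Remark~\ref{formule explicite en terme matricielle} with the linear-in-time evolution of $X$, $Y$, $M$ coming from \eqref{evolution <u|gn>} and Lemma~\ref{evolution in the gnt}, re-express everything as a function of the spectral resolution of $\Luzerotilde$ (resp. $\Luzero$), and pass to the limit along $H^2$ approximations of $u_0$. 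Your identification of where $\|u_0\|_{L^2}<1$ enters in the focusing case --- through the uniform gap \eqref{simplicite u L2 <1}, propagated by $L^2$ conservation --- is correct, as is your integration-by-parts verification that the nonlinearity does not contribute to $\frac{d}{dt}\|u\|_{L^2}^2$.

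That said, as a proof your text has a genuine gap, which you yourself flag as ``the hard part'': the analytic core --- uniform-in-$t$ control of $(\Id-zM(t))^{-1}$ and continuity of the whole formula in $u_0$ in the $H^s$, in particular $L^2$, topology --- is asserted rather than proved, and two of its ingredients are stated in a form that is not available. (i) There is no uniform bound on $(\Id-zM(t))^{-1}$ for $z\in\overline{\D}$: $M$ is the matrix of a shift-type contraction in an orthonormal basis, so $\|M\|=1$ and the resolvent grows like $(1-\va{z})^{-1}$ as $\va{z}\to1$; the formula defines an element of the Hardy space whose \emph{boundary trace} must be estimated in $L^2(\T)$ (and in $H^s$) uniformly on $[-T,T]$, and that estimate is precisely the content of the theorem, not a granted fact. (ii) Lipschitz dependence of the individual spectral data $(\la_n,\fnuzero)$ on $u_0$ is too strong: eigenfunctions are defined only up to phase and degenerate under eigenvalue collisions, so the usable statement is continuity of spectral projectors, which in the defocusing case is rescued by the simplicity \eqref{simplicite val prop Lu tilde} and in the focusing case requires the gap \eqref{simplicite u L2 <1}; even then, passing to the limit in the infinite-matrix formula requires quantitative summability of the entries of $X$, $Y$, $M$ (uniform in the approximating sequence) that your sketch does not supply. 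Since existence, uniqueness, and the stability statement \emph{are} exactly the assertion that these limits exist and are independent of the approximation, deferring them leaves the proposal a plan consistent with the method of \cite{Ba23} rather than a proof of the theorem.
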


At a second stage, recall that Lemma~\ref{evolution in the gnt}\,, Proposition~\ref{tauct fn} and Corollary~\ref{evolution theta[n]} have been the keys to characterize the traveling waves for the defocusing equation~\eqref{CS-defocusing}\,, 
and Lemma~\ref{evolution in the gnt-foc}\,, Proposition~\ref{tauct fn foc} and Corollary~\ref{evolution theta[n] foc} for the focusing equation~\eqref{CS-focusing}\,. As a result, we need to extend these Proposition/lemma/Corollary to less regular potentials $u$\,.
Hence, 
we recall from \cite[Corollary~3.12]{Ba23} the following result.



\begin{corollary*}[Corollary~3.12 of \cite{Ba23}]
     For any $0\leq s\leq 2$, let $u_0\in H^s_+(\T)$\,. There exists an orthonormal basis $(g_n^t) $ of $L^2_+(\T)$ constituted from the eigenfunctions of $L_{u(t)}$\,,
		such that for all $n\in \Nzero\,,$ 
		\be\label{coordonnees de u dans la base des vect.propre fn}
		\ps{u(t)}{\gnt}=\ps{u_0}{\fnuzero}\eee^{-it \la_n^2(u_0)}\,, \quad\qlq t\in\R\,,
		\ee
  where $u(t)$ is the solution of \eqref{CS-defocusing} starting at $u_0$ at $t=0\,.$
  Furthermore, the same holds for \eqref{CS-focusing} under the additional condition $\|u_0\|_{L^2}<1$\,.
	\end{corollary*}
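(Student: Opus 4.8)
The plan is to deduce the low-regularity statement from the smooth case $u_0\in\Htwo$ by a density-and-continuity argument, exploiting that the spectral data of $\Lutilde$ depend continuously on $u$ and that the eigenvalues are invariants of the motion. First I would recall the smooth case: for $u_0\in\Htwo$, define $(\gnt)$ by the transport equation $\p_t\gnt=\Butilde\gnt$ with $\gnt|_{t=0}=\fnuzero$, where $(\fnuzero)$ diagonalizes $\Luzerotilde$. By Remark~\ref{Rq gnt vect. propre+ evolution<u|fn>} the $\gnt$ remain eigenfunctions of $\Luttilde$ for the (time-independent) eigenvalues $\la_n(u_0)$, and pairing $\p_tu=\Butilde u-i\Lutilde^2u$ against $\gnt$ yields the scalar ODE $\p_t\ps{u(t)}{\gnt}=-i\la_n^2\ps{u(t)}{\gnt}$, hence \eqref{coordonnees de u dans la base des vect.propre fn}. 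Thus the identity holds whenever $u_0\in\Htwo$.

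Next comes the approximation step. Given $u_0\in H^s_+(\T)$ with $0\le s\le 2$, pick $\uzeroeps\in\Htwo$ with $\|\uzeroeps-u_0\|_{H^s}\to 0$. The recalled well-posedness theorem gives $\sup_{t\in[-T,T]}\|\ueps(t)-u(t)\|_{H^s}\to 0$, in particular $\ueps(t)\to u(t)$ in $\Ltwo$ uniformly on compact time intervals. I would then invoke continuity of the spectral data. Since the quadratic form of $\Lutilde=D+T_uT_{\bar u}$ is $\ps{\Lutilde h}{h}=\|D^{1/2}h\|^2+\|T_{\bar u}h\|^2$, the map $u\mapsto\Lutilde$ is stable under $L^2$-perturbations through the form-compact term $T_uT_{\bar u}$, so the min--min principle yields $\la_n(\uzeroeps)\to\la_n(u_0)$ for each fixed $n$. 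Because the eigenvalues of $\Lutilde$ are simple with $\la_{n+1}\ge\la_n+1$ (Proposition~\ref{multiplicite val propres}), the rank-one spectral projectors $P_n^{\eps}(t)$ converge in operator norm (via the Cauchy integral of the resolvent around a small fixed circle enclosing only $\la_n(u_0)$), so one may select normalized eigenfunctions $g_n^{\eps,t}$ of $\tilde{L}_{\ueps(t)}$ converging to a normalized eigenfunction $\gnt$ of $\Luttilde$.

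Then I would pass to the limit. For each fixed $n$ and $t$ the smooth identity reads $\ps{\ueps(t)}{g_n^{\eps,t}}=\ps{\uzeroeps}{f_n^{\uzeroeps}}\,\eee^{-it\la_n^2(\uzeroeps)}$. On the right, $f_n^{\uzeroeps}\to\fnuzero$ and $\la_n(\uzeroeps)\to\la_n(u_0)$, so the right-hand side tends to $\ps{u_0}{\fnuzero}\eee^{-it\la_n^2(u_0)}$; on the left, combining $\ueps(t)\to u(t)$ in $\Ltwo$ with $g_n^{\eps,t}\to\gnt$ gives $\ps{\ueps(t)}{g_n^{\eps,t}}\to\ps{u(t)}{\gnt}$, and the limiting family $(\gnt)$ is the desired orthonormal basis of eigenfunctions of $\Luttilde$. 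This establishes \eqref{coordonnees de u dans la base des vect.propre fn}. For \eqref{CS-focusing} the only modification is that simplicity of the $(\nu_n)$ is not automatic; under $\|u_0\|_{L^2}<1$ one chooses $\alpha$ with $\|u_0\|_{L^2}^2<1-\alpha$ and invokes \eqref{simplicite u L2 <1} to get $\nu_{n+1}>\nu_n+\alpha$, which restores simplicity and lets the same argument run verbatim.

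The main obstacle will be securing convergence of the spectral data at the critical regularity $s$ close to $0$, where $\ueps\to u$ holds only in the $L^2$ topology: one must control the form perturbation $T_uT_{\bar u}$, i.e. estimate $\big|\|T_{\bar u}h\|^2-\|T_{\bar v}h\|^2\big|$ through $\|\Pi((\bar u-\bar v)h)\|\le\|(u-v)h\|_{L^2}$ for $h$ in the form domain $H^{1/2}_+$, which is exactly where the form-compactness of $T_uT_{\bar u}$ relative to $D$ is needed to make the min--min estimates robust. A secondary, purely bookkeeping issue is the phase ambiguity of the eigenfunctions: since eigenfunctions are defined only up to a unimodular constant, the genuine content is convergence of the projectors rather than of individual $g_n^{\eps,t}$. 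This is harmless because $|\ps{u(t)}{\gnt}|^2=\ps{P_n(t)u(t)}{u(t)}$ is phase-free, so projector convergence pins the modulus, and the phase is fixed coherently by the normalization $\gnt|_{t=0}=\fnuzero$, which survives the $H^s$-limit.
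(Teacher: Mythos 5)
The paper never proves this statement: it is imported verbatim from \cite{Ba23} (Corollary~3.12 there), and Section~\ref{Rk on the regularity} only indicates the strategy, recalling exactly the two ingredients you use — the $H^s$-approximation theorem for the flow and the bilinear estimate of Lemma~2.7 of \cite{Ba23}. Your reconstruction (smooth case via the transported basis of Definition~\ref{orthonormal Basis gnt} and Remark~\ref{Rq gnt vect. propre+ evolution<u|fn>}, then a density-and-spectral-continuity passage to the limit, with the uniform gap $\geq1$ in the defocusing case and the gap from \eqref{simplicite u L2 <1} under $\|u_0\|_{L^2}<1$ in the focusing case) is therefore in substance the intended argument.

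Two steps need tightening before the argument closes. First, your continuity estimate for the quadratic form of $\Lutilde$ is stated through $\|\Pi((\bar u-\bar v)h)\|_{L^2}\le\|(u-v)h\|_{L^2}$, and for $h$ merely in the form domain $H^{1/2}_+(\T)$ this does not close: $H^{1/2}(\T)$ does not embed in $L^\infty(\T)$, so $\|(u-v)h\|_{L^2}$ is not controlled by $\|u-v\|_{L^2}$ times the form norm of $h$. The correct tool is precisely the estimate recalled at the end of Section~\ref{Rk on the regularity}, applied to $u-v$:
\[
\Ldeux{T_{\overline{u-v}}\,h}^2\le\big(\ps{Dh}{h}+\Ldeux{h}^2\big)\,\Ldeux{u-v}^2\,,
\]
which gives a relative form bound tending to $0$, hence $\la_n(\uzeroeps)\to\la_n(u_0)$ by min--max and norm convergence of the Riesz projectors along a fixed contour (this same form construction, from \cite[Appendix A]{GL22}, is also what defines $\Lutilde$ at $\Ltwo$ regularity, a point you leave implicit). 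Second, your phase bookkeeping is not quite right as stated: at the limiting regularity there is no transport equation (no $\Butilde$ for $u\in\Ltwo$), so ``the phase is fixed coherently by the normalization $\gnt|_{t=0}=\fnuzero$'' has no meaning for the limiting basis, and the transported phases of $g_n^{\eps,t}$ need not converge, only the projectors do. The repair is easy but should be said: projector convergence gives $|\ps{\ueps(t)}{g_n^{\eps,t}}|\to|\ps{u(t)}{h}|$ for any unit eigenfunction $h$ of $\Luttilde$, while the smooth identity forces this modulus to equal $|\ps{u_0}{\fnuzero}|$; if it vanishes, \eqref{coordonnees de u dans la base des vect.propre fn} holds for every phase choice, and if not, one \emph{defines} $\gnt$ as the unique unimodular rotation of $h$ for which the identity holds at that $t$. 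With these two corrections your proof is complete and coincides with the argument of \cite{Ba23}.
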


 \begin{Rq}
     Note that there is a point hidden in the previous corollary, namely, the fact that $L_u$ is well--defined with $u\in\Ltwo\,.$ We refer the readers to \cite[Appendix A]{GL22} for the construction of this operator and to \cite[Corollary 3.2]{Ba23} for a way to identify its spectrum.
 \end{Rq}
 By repeating the same analysis of the  proof of \cite[Corollary 3.12]{Ba23}, one can establish  the existence of an orthonormal basis $(g_n^t) $ of $L^2_+(\T)$ satisfying 
     \begin{align*}
        \ps{1}{\gnt}=&\,\ps{1}{\fnuzero}\,\eee^{-i\la_n^2t}\ ,
        \\
        \ps{S\gpt}{\gnt}=&\,\ps{S\fpuzero }{\fnuzero}\,\eee^{i((\la_p+1)^2-\la_n^2)\,t}\,.
        \notag
    \end{align*}

\vskip0.25cm
Finally, in Section~\ref{finite gap potentials}\,, more precisely  in \eqref{in L1}\,, we made use  of the fact that the domain of the Lax operator $L_u$ with $u\in \Htwo$ is $H^1_+(\T)$  in order to infer that $\Pi(\bar{u}\uppsi)\in L^2\,.$ However, it should be noted that  the lax operator $L_u$ with $u\in\Ltwo$ has its domain a subset of $H^{\frac12}_+(\T)$ \cite[Appendix A]{GL22}\,. Hence, we need the following lemma to infer the result.

	\begin{lemma*}[Lemma~2.7 of \cite{Ba23}]
		Let $h\in H^{\frac12}_{+}(\T)\,$, $u\in\Ltwo\,,$
		\be\label{inegalite-Tu-bar-h}
		\Ldeux{T_{\bar{u}}h}^2\leq \left(\ps{Dh}{h}+\Ldeux{h}^2\right)\Ldeux{u}^2\,,
		\ee
  where we recall $T_u$ was defined in \eqref{Toeplitz operator}\,.
	\end{lemma*}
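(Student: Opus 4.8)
The plan is to work entirely on the Fourier side, where both sides of \eqref{inegalite-Tu-bar-h} become explicit series in the Fourier coefficients, so that the estimate reduces to a single application of the Cauchy--Schwarz inequality followed by a lattice-point count. First I would expand $h=\sum_{n\geq 0}\fr h(n)\eee^{inx}$ and $u=\sum_{m\geq0}\fr u(m)\eee^{imx}$; since $u,h\in L^2$, the product $\bar u\,h$ lies in $L^1(\T)$, so its Fourier coefficients are well defined and the Szeg\H{o} projection $\Pi$ simply retains the nonnegative frequencies. A direct computation then gives, for every $k\geq 0$,
\[
    \fr{T_{\bar u}h}(k)=\widehat{\bar u\,h}(k)=\sum_{m\geq0}\overline{\fr u(m)}\,\fr h(m+k)\,.
\]

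Next I would rewrite the three quantities appearing in \eqref{inegalite-Tu-bar-h} in these coordinates: one has $\Ldeux{u}^2=\sum_{m\geq0}\va{\fr u(m)}^2$, and, using that $D=-i\p_x$ acts as multiplication by $n$ on $\eee^{inx}$,
\[
    \ps{Dh}{h}+\Ldeux{h}^2=\sum_{n\geq0}(n+1)\,\va{\fr h(n)}^2\,,
\]
which is equivalent to $\|h\|_{H^{1/2}(\T)}^2$ and is therefore finite precisely because $h\in H^{\frac12}_+(\T)$. The target then becomes the purely sequential inequality
\[
    \sum_{k\geq0}\Big|\sum_{m\geq0}\overline{\fr u(m)}\,\fr h(m+k)\Big|^2
    \leq
    \Big(\sum_{n\geq0}(n+1)\va{\fr h(n)}^2\Big)\Big(\sum_{m\geq0}\va{\fr u(m)}^2\Big)\,.
\]

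To prove this I would apply Cauchy--Schwarz to the inner sum, namely $\big|\sum_m\overline{\fr u(m)}\fr h(m+k)\big|^2\leq\big(\sum_m\va{\fr u(m)}^2\big)\big(\sum_m\va{\fr h(m+k)}^2\big)$, then sum over $k$ and factor out $\Ldeux u^2$. By Tonelli (all terms are nonnegative) the remaining double sum may be reindexed by $n=m+k$, and for each fixed $n\geq0$ there are exactly $n+1$ pairs $(m,k)\in\Nzero\times\Nzero$ with $m+k=n$; hence $\sum_{k\geq0}\sum_{m\geq0}\va{\fr h(m+k)}^2=\sum_{n\geq0}(n+1)\va{\fr h(n)}^2$, which closes the estimate. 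The finiteness of the right-hand side simultaneously certifies that $T_{\bar u}h\in L^2_+(\T)$, so the formal manipulations are legitimate a posteriori.

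There is no genuine obstacle here: the entire argument is one Cauchy--Schwarz step together with the observation that the frequency $n$ is hit with multiplicity $n+1$, which is exactly what upgrades the flat $L^2$ sum of $h$ to the weighted ($H^{\frac12}$) sum on the right. The only point deserving a line of care is justifying that the formal Fourier series represents the honest $L^2$ function $T_{\bar u}h$ when $u$ is merely in $\Ltwo$ rather than $L^\infty$; this is handled by noting that $\bar u\,h\in L^1$ makes the coefficients meaningful, and that the finiteness obtained at the end promotes the sequence $(\fr{T_{\bar u}h}(k))_{k\geq0}$ to an element of $\ell^2$, hence $T_{\bar u}h\in L^2_+(\T)$ with norm equal to that sum by Parseval.
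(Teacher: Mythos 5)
Your proof is correct: the identity $\widehat{T_{\bar u}h}(k)=\sum_{m\geq0}\overline{\fr u(m)}\,\fr h(m+k)$, the Cauchy--Schwarz step, and the multiplicity count $\#\{(m,k)\in\Nzero^2 : m+k=n\}=n+1$ together yield exactly $\Ldeux{T_{\bar u}h}^2\leq\big(\ps{Dh}{h}+\Ldeux{h}^2\big)\Ldeux{u}^2$, and your closing remark (that $\bar u h\in L^1$ makes the coefficients meaningful and the finiteness of the bound promotes them to $\ell^2$) handles the only delicate point when $u$ is merely in $\Ltwo$. Note that this paper does not reprove the lemma but only recalls it from \cite{Ba23}, and your Plancherel--Cauchy--Schwarz argument is essentially the same proof given there, so there is nothing further to reconcile.
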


\section{\textbf{Open problems}}
\label{Open Problems}
1. The full characterization of the traveling waves $u_0(x-ct)$ of \eqref{CS-focusing} is still an open problem. 
\vskip0.25cm
2.
Note that along this paper, we have treated the case where the traveling waves of the Calogero--Sutherland DNLS equation \eqref{CS-DNLS} are of the form 
\[
    u(t,x):=u_0(x-ct)\,, \qquad c\in\R\,.
\]
But, one may wonder if there exist traveling wave solutions with a phase factor, such as
\be\label{traveling waves with a phase}
    u(t,x):=\eee^{i\varphi(t) }u_0(x-ct)\,, \qquad \varphi(t)\,,\,c\in\R\,.
\ee
However, let us underline the following feature : observe that the mean $\ps{u}{1}$ is conserved along the flow of the Calogero--Sutherland DNLS equation \eqref{CS-DNLS}\,, for any solution $u$ in the Hardy space of the circle $\T\,$. Indeed, by applying an integration by parts and since $u$ is in the Hardy space, then
\[
    i\p_t\ps{u}{1}=-\ps{\p_x^2u}{1}\pm 2 \ps{D\Pi(\va{u}^2)}{\bar{u}}=0\,.
\]
Therefore,
\begin{itemize}
    \item If $\ps{u_0}{1}\neq 0$, then  $\varphi(t)$ in \eqref{traveling waves with a phase} must be a constant in time.
    \item In regard to the case where $\ps{u_0}{1}= 0$\,, the question of the existence of traveling waves of \eqref{CS-DNLS} of the form \eqref{traveling waves with a phase} remains an open problem. However, one can easily prove that $(\varphi(t),c)$ are related via the following identity 
    \[
    \varphi'(t)  -Nc =-N^2 \,,
    \]
    where $N$ is the positive integer appearing after rewriting $u_0$ as $u_0=S^Nv_0$ with $\ps{v_0}{1}\neq 0\,,$ as $\ps{u_0}{1}=0$\,. Indeed, 
    by writing the solution $u(t,x)$ as
    \begin{align*}
        u(t,x)=&\,\eee^{i\varphi(t)}u_0(x-ct)
        \\
        =&\,\eee^{i\varphi(t)} \eee^{iN(x-ct)}v_0(x-ct)\,,
    \end{align*}
    one observes that if $u$ satisfies \eqref{CS-DNLS}\,, then
    \be\label{eqt en v}
    \hskip1cm
    \begin{cases}
        -(\varphi'(t)-Nc)\,v_0 -N^2v_0 +P(\p_xv_0\,,\,\p_x^2v_0)\mp2i\p_x\Pi(\va{v_0}^2)v_0=0\,.
        \\
        P(w,\tilde{w}):=(2N-c)i\,w+\tilde{w}
    \end{cases}
    \ee
    We conclude by taking the inner product of the last identity with $1\,,$ that
    \[
            \varphi'(t)  -Nc =-N^2 \,.
    \]

\end{itemize}

\section*{\textbf{Appendix}} \label{Appendix}
\begin{enumerate}[wide=0pt, leftmargin=0pt, labelwidth=0pt, align=left, itemsep=5pt]
    \item \label{Appendix 1} \textit{The following counterexample illustrate the necessity of the condition $\nu_n\neq 0$ in order to obtain the first point in Proposition~\ref{gap=0 and eigenspaces}}\,.
    \vskip0.2cm
    \noindent
    Consider the 0--gap potential (i.e. a potential satisfying $\gamma_n(u)=0$ for all $n\in\Nzero$\,, where $\gamma_n(u)$ is defined in~\eqref{gap gamma n})
    \[
        u(z)=\frac{\sqrt{1-\va{p}^2}}{1-pz}\,, \qquad p\in\D\,.
    \]
    One can easily check that $L_uf_0=-f_0$ for 
    \[
        f_0(z):=\frac{\sqrt{1-\va{p}^2}}{1-pz}\,,
    \]
    and that, for all $k\in\Nzero\,,$ 
    $L_uS^k\,\uppsi=k\,S^k\uppsi$
    where
    \[
        \uppsi(z):=\frac{z-\overline{p}}{1-pz}\,.
    \]
    Therefore, the spectrum of $L_u$ is given by
    $$
    \sigma(L_u)=\{-1\,<\,0\,<\,1\,<\,2\,<\,\ldots\,<\,n\,<\,n+1\,<\,\ldots\}\ ,
$$  
where notice 
$\nu_1=\nu_0+1$ and $Sf_0 \not \equiv \uppsi\,.$
\item \label{Appendix 2}
\textit{In this part of the Appendix we prove that the two integers $N_1$ and $N_2$  appearing in Corollary~\ref{<u|f[n]> <=> gamma[n]=0} are not necessarily equal.}
\vskip0.2cm
\noindent

Let 
\[
    u(z):=\frac{\sqrt{2(1-\va{p}^4)}\,z}{1-p^2z^2}\,, \qquad p\in\D^*\,.
\]
For such $u$\,, one can check  that  $$\displaystyle\uppsi_u:=\frac{(z-\bar{p})(z+\bar{p})}{(1-pz)(1+pz)}$$ is an eigenfunction of $L_u$ associated with the eigenvalue $0\,.$
Additionally, for all $k\in\Nzero\,,$
\begin{align*}
    \ps{S^k\uppsi_u}{u}=\intc \frac{z^k (z^2-\bar{p}^2)}{1-p^2z^2}\frac{\sqrt{2(1-\va{p}^4)}z}{z^2-\bar{p}^2}\dz=0\,,
\end{align*}
leading, by \eqref{commutateur [Lu,s]}\,, to 
$
    L_uS^k\uppsi_u=k\, S^k \uppsi_u\,,$ for all $  k\in\Nzero\,.
$
Note that  $\deg \uppsi_u=2\,$, then it remains to find two eigenvectors of $L_u\,,$ generating the model space~\footnote{~\cite[Corollary 5.18]{GMR16}} $(\uppsi_u\Ltwo)^\perp$\,.
First, we have $L_u1=0$ as $L_u 1=-\ps{1}{u}1$ and $\ps{u}{1}=0$\,.
Second, by taking 
\[
    f_0=\frac{\sqrt{1-\va{p}^4}\,z}{1-p^2z^2}\,,
\]
one has, $L_u f_0=-f_0\,.$ Therefore, by denoting for all $k\in\Nzero$\,, $\ f_{2+k}:=S^k\uppsi_u\,$ and $f_1:=1\,,$
we have $\ps{u}{f_n}=0$ for all $n\geq 1$\,.   But, on the other side,
$
    \nu_2-\nu_1-1=0-0-1\neq 0\,.
$
\end{enumerate}

\vskip0.5cm


\end{document}